\newcommand{\GG}{\mathfrak G}
\newcommand{\R}{\mathbb{R}}
\newcommand{\N}{\mathbb{N}}
\newcommand{\Z}{\mathbb{Z}}
\newcommand{\Px}{\mathbb{P}}
\newcommand{\C}{\mathbb{C}}
\newcommand{\K}{L}
\newcommand{\kx}{K}
\newcommand{\Cc}{\mathcal{C}}
\newcommand{\Fc}{\mathcal{F}}
\newcommand{\Gc}{\mathcal{G}}
\newcommand{\Wc}{\mathcal{W}}
\newcommand{\Dc}{\mathcal{D}}
\DeclareMathOperator{\pnt}{\raise 0.5mm \hbox{\large\bf.}}
\DeclareMathOperator{\GL}{GL}
\DeclareMathOperator{\inom}{in}
\DeclareMathOperator{\gT}{gT}
\DeclareMathOperator{\GF}{GF}
\DeclareMathOperator{\GC}{GC}
\DeclareMathOperator{\gGC}{gGC}
\DeclareMathOperator{\Gr}{Gr}
\newtheorem{theorem}{\bf Theorem} [section]
\newtheorem{lemma}[theorem]{\bf Lemma}
\newtheorem{cor}[theorem]{\bf Corollary}
\newtheorem{prop}[theorem]{\bf Proposition}
\theoremstyle{definition}
\newtheorem{defn}[theorem]{\bf Definition}
\newtheorem{notation}[theorem]{\bf Notation}
\newtheorem{rem}[theorem]{\bf Remark}
\newtheorem{ex}[theorem]{\bf Example}
\theoremstyle{plain}
\newtheorem*{satz*}{Theorem}
\title[GTV on Subvarieties and in the Non-constant Coefficient Case]{Generic Tropical Varieties on Subvarieties and in the Non-constant Coefficient Case}
\author{Kirsten Schmitz}
\address{Technische Universit\"at Kaiserslautern, Fachbereich Mathematik, 67653 Kaiserslau\-tern, Germany}
\email{schmitz@mathematik.uni-kl.de}
\begin{document}

\begin{abstract}
In earlier papers it was shown that the generic tropical variety of an ideal can contain information on algebraic invariants as for example the depth in a direct way. The existence of generic tropical varieties has so far been proved in the constant coefficient case for the usual notion of genericity. In this paper we generalize this existence result to include the case of non-constant coefficients in certain settings. Moreover, we extend the notion of genericity to arbitrary closed subvarieties of the general linear group. In addition to including the concept of genericity on algebraic groups this yields structural results on the tropical variety of an ideal under an arbitrary linear coordinate change.
\end{abstract}

\maketitle

\section{Introduction}

One aim of tropical algebraic geometry is to provide a tool to study certain algebraic varieties with the help of combinatorial objects associated to them, see for example \cite{DIFEST,DR,MI1,STTE}. These tropical varieties can be defined in various ways and settings, for instance as explained in \cite{GA}. We will use a definition relying on Gr\"obner basis theory as stated below.

A striking observation concerning tropical varieties as defined in this way is that they depend on the choice of coordinates of the polynomial ring containing the defining ideal. As algebraic invariants of its coordinate ring are, however, by definition independent of the choice of coordinates, the question arises whether there are generic tropical varieties which encode algebraic invariants in a direct way. In \cite{TK} it was shown that generic tropical varieties exist in the constant coefficient case for the usual notion of ``genericity''. These generic tropical varieties contain information on invariants as for example the depth and multiplicity of the coordinate ring in a direct way, see \cite{TK2}.

In this paper the existence result of \cite{TK} will be generalized in two ways. First it will include the non-constant coefficient case in the setting of the field of generalized power series $L$ over a given ground field $K$. The proofs for this can also be adapted to work for slightly different valued field, e.g the field of Puiseux series over $K$ (in which case we need to assume that the characteristic of $K$ is $0$) or the field introduced in \cite{MAR}, but rely on the structure of $L$ being a field of formal power series over $K$.

The second generalization is with respect to the notion of ``genericity''. In the classical statements on the existence of generic initial ideals in Gr\"obner basis theory (as in \cite[Section 15.9]{E} and \cite{GRE}) the term ``generic'' refers to the existence of a non-empty open subset $U$ of the general linear group $\GL_n(K)$ over $K$ such that a given property hold for all $g\in U$. Most proofs revolving around this notion, however, do not use any specific properties of $\GL_n(K)$ other than it being closed in itself and irreducible. We therefore use the notion of genericity with respect to any closed irreducible subvariety $V$ of $\GL_n(K)$ as explained in Section \ref{genericity} and prove the existence of a generic tropical variety on $V$. This leads to certain finiteness results regarding possible tropical varieties of arbitrary linear coordinate transformations, such as Corollary \ref{endlmoegl}.

This paper is organized as follows. In Section \ref{groebnercompl} the basic objects of study in our setting and our notation is introduced and the main result is summarized. Since the proofs of the main results depend on considering certain extensions of valued fields, the technical issues and statements concerning this are presented in Section \ref{valuedfields}. The precise definition of genericity used here are given in Section \ref{genericity} along with some general results needed in the following. In Section \ref{genericgroebnercomplexes} the existence of generic Gr\"obner complexes in this general setting is proved using the methods developed in \cite{MAST}. The proof of the existence of generic tropical varieties in this general meaning seems to be more involved. It relies on the ideas of \cite{HETH} where short tropical bases are produced with the help of rational projections. The technical generalizations concerning the auxiliary ideals and the rational projections introduced there to our context are explained in Sections \ref{techgensets} and \ref{ratproj}, respectively. In Section \ref{generictropicalvarieties} we give the proof of the existence of generic tropical varieties and generic tropical bases. Section \ref{examples} concludes the paper with some example classes for which generic Gr\"obner complexes and tropical varieties can be computed directly.

The material is to a large extent contained in \cite{ICH}.

\section{Preliminaries and Statement of Result}\label{groebnercompl}

For the following let $K$ be an algebraically closed field and $L$ be the field of generalized power series over $K$, see Section \ref{valuedfields} for the technical details on the valued fields needed for the proof of the main theorems. The assumption that $K$ be algebraically closed is needed for instance for Proposition \ref{extkrempel}. We will use the definition of Gr\"obner complexes and tropical varieties from \cite[Chapter 2]{MAST}. Following the notation there, for an element $a$ of the valuation ring $R_L$ of $L$ we denote by $\overline{a}$ the image of $a$ in the residue field of $R_L$ modulo its maximal ideal. Note that in our setting this field is canonically isomorphic to $K$ and we will identify it with $K$ in the following.

Let $S_{\K}={\K}[x_1,\ldots,x_n]$ and $S_K=K[x_1,\ldots,x_n]$ be the polynomial rings in $n$ variables over ${\K}$ and $K$, respectively. For a polynomial $f\in S_{\K}$ with $f=\sum_{\nu\in\N^n} a_{\nu} x^{\nu}$ all of whose coefficients $a_{\nu}$ are in $R_L$ we denote by $\overline{f}$ the polynomial $\sum_{\nu\in\N^n} \overline{a_{\nu}} x^{\nu}\in S_K$.

For $f\in S_L$ and $\omega\in\R^n$ we can consider the polynomial $f(x_1t^{\omega_1},\ldots,x_nt^{\omega_n})$, which is the image of $f$ under the ${\K}$-algebra automorphism on $S_{\K}$ induced by mapping $x_i$ to $x_it^{\omega_i}$. Let $$W=\min_{\nu}\left\{ v(a_{\nu})+\omega\cdot\nu\right\}.$$ Then $$\inom_{\omega}(f)=\overline{t^{-W}f(x_1t^{\omega_1},\ldots,x_nt^{\omega_n})}\in S_K$$ is called the
\emph{initial form of $f$ with respect to $\omega$}.

We consider graded ideals $I\subset S_{\K}$ with respect to the standard $\Z$-grading. For a graded ideal $I\subset S_{\K}$ the ideal $$\inom_{\omega}(I)=(\inom_{\omega}(f):f\in I)\subset S_K$$ is called the \emph{initial ideal of $I$ with respect to $\omega$}. We denote by $(S_{\K}/I)_d$ the ${\K}$-vector space of homogeneous elements of degree $d$ of $S_{\K}/I$ for $d\in \Z$.

In contrast to the classical setting the partial ordering of terms induced by their $\omega$-weights cannot be refined by a monomial ordering, since this partial order depends on the coefficients of the monomials appearing. This leads to some technical difficulties, see \cite{MAST}. However, the main properties of $I$ as a graded ideal in $S_L$ are preserved under the degeneration to $\inom_{\omega}(I)$. 

\begin{prop}[\cite{MAST}]\label{hilbinit}
Let $I\subset S_L={\K}[x_1,\ldots,x_n]$ be a graded ideal and $\omega\in\R^n$. Then $\inom_{\omega}(I)$ is a graded ideal and the Hilbert function of the two corresponding coordinate rings agree: For $d\geq 0$ we have $$\dim_{\K}(S_{\K}/I)_d=\dim_{\kx}(S_{\kx}/\inom_{\omega}(I))_d.$$ In particular, this implies equality for the Krull dimensions $\dim (S_{\K}/I)=\dim (S_{\kx}/\inom_{\omega}(I))$.
\end{prop}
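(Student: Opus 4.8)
The plan is to reduce the statement to a known result over a field with nontrivial valuation — precisely the Hilbert function result for initial ideals in the sense of \cite[Chapter 2]{MAST} — and then deduce the Krull dimension equality as a formal consequence. Concretely, the first step is to recall that for a graded ideal $I\subset S_L$ the construction $\inom_\omega(I)$ is exactly the degeneration studied in \cite{MAST}, where one works over the valued field $L$ and its residue field $K$. The fact that $\inom_\omega(I)$ is again a graded ideal of $S_K$ is essentially immediate: each $\inom_\omega(f)$ is homogeneous of the same degree as the homogeneous component of $f$ it comes from (the substitution $x_i\mapsto x_it^{\omega_i}$ and the scaling by $t^{-W}$ preserve degree), so the ideal they generate is $\Z$-graded.

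For the Hilbert function equality the key point is to produce, in each degree $d$, a $K$-vector space isomorphism between $(S_K/\inom_\omega(I))_d$ and the reduction of a suitable $R_L$-lattice inside $(S_L/I)_d$. The standard argument: choose for the finite-dimensional $L$-vector space $(S_L/I)_d$ a basis adapted to the valuation/weight filtration induced by $\omega$ (a ``$\omega$-adapted'' or Gröbner-type basis of the degree-$d$ piece), so that the images of its leading forms give precisely a $K$-basis of $(S_K/\inom_\omega(I))_d$. This is carried out in \cite{MAST}; the role of $K$ being algebraically closed, and of $L$ being a field of generalized power series with residue field $K$, is to guarantee that the valuation ring $R_L$ is sufficiently nice (in particular that the relevant modules are free over $R_L$ and that reduction modulo the maximal ideal behaves well), so I would simply cite the corresponding statement rather than reprove it. Counting dimensions on both sides then gives $\dim_L(S_L/I)_d=\dim_K(S_K/\inom_\omega(I))_d$ for all $d\ge 0$.

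Finally, the ``in particular'' clause is a general fact: the Krull dimension of a finitely generated graded algebra over a field is one less than the degree of the pole of its Hilbert series at $1$, equivalently the degree $+1$ of its Hilbert polynomial. Since the Hilbert functions of $S_L/I$ and $S_K/\inom_\omega(I)$ agree in every degree, their Hilbert polynomials coincide, hence so do their Krull dimensions: $\dim(S_L/I)=\dim(S_K/\inom_\omega(I))$.

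The main obstacle is the first, analytic/algebraic part: constructing the $\omega$-adapted basis of $(S_L/I)_d$ whose leading forms descend to a basis of the initial ideal in degree $d$. Over a field with a nontrivial valuation this is more delicate than in the classical monomial-order setting, because — as noted in the excerpt — the $\omega$-weight preorder on terms cannot be refined to a genuine monomial order, so one cannot simply invoke classical Gröbner theory. This is exactly the technical content already handled in \cite{MAST}, so in this paper the step amounts to a careful citation; but it is where all the real work sits, and the hypotheses that $K$ be algebraically closed and that $L$ be a formal power series field over $K$ are precisely what make that cited machinery applicable here.
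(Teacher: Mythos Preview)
Your proposal is correct and matches the paper's treatment exactly: the paper gives no proof of this proposition at all, simply citing \cite{MAST}, and your sketch accurately describes the argument carried out there (a degree-by-degree basis adapted to the $\omega$-weight filtration, followed by the standard Hilbert-polynomial argument for Krull dimension). There is nothing to add.
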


In classical Gr\"obner basis theory for a graded ideal $I\subset S_K$ a complete fan in $\R^n$ is defined by the following equivalence relation, see \cite{MORO}. Two vectors $\omega,\omega'\in \R^n$ are equivalent if and only if $\inom_{\omega}(I)=\inom_{\omega'}(I)$. The equivalence class $C=C[\omega]$ of some $\omega\in \R^n$ is a relatively open cone and we denote by $\inom_{C}(I)$ the initial ideal corresponding to it. The topological closure of $C[\omega]$ is called a \emph{Gr\"obner cone} and the collection of all cones arising in this way form a complete fan, the \emph{Gr\"obner fan $\GF(I)$}, in $\R^n$.

In the non-constant coefficient case for graded ideals $I\subset S_L$ the set of all $\omega\in\R^n$ which induce the same ideal $\inom_{\omega}(I)$ are the relative interior of a polyhedron, called a \emph{Gr\"obner polyhedron}. The collection of all these polyhedra form a polyhedral complex in $\R^n$. All of this is proved in \cite[Chapter 2]{MAST}. For a graded ideal $I\subset S_L$ the polyhedral complex defined by all Gr\"ob\-ner polyhedra of $I$ is called the \emph{Gr\"obner complex $\GC(I)$} of $I$.

We now consider the zero-set $X(I)\subset {\K}^n$ of $I$ consisting of all $p\in {\K}^n$ with $f(p)=0$ for all $f\in I$. Note that it is not required that $I$ is a radical ideal. The notion of the tropical variety of $I$ originally describes the component-wise image of $X(I)$ under $v$, i.e. $$\left\{(v(p_1),\ldots,v(p_n)): p\in X(I)\right\}\cap \R^n.$$ For computational aspects there is a useful description of tropical varieties in terms of initial ideals, which is closely connected to the notion of initial ideals defined above. By the so called fundamental theorem of tropical geometry (see for example \cite[Theorem 4.2]{DR}) the tropical variety of a graded ideal $I\subset S_L$ as defined above can be identified with the set of all $\omega\in \R^n$, such that $\inom_{\omega}(f)$ is no monomial for all $f\in I$ or, equivalently, such that $\inom_{\omega}(I)$ contains no monomial. With this description the tropical variety is a subcomplex of the Gr\"obner complex of $I$ in a natural way. We consider this polyhedral complex structure as part of this notion and take this as our definition.

\begin{defn}
Let $I\subset S_L$ be a graded ideal. Then the subcomplex of the Gr\"obner complex of $I$ induced on the set $$\left\{\omega\in\R^n: \inom_{\omega}(I) \text{ does not contain a monomial } \right\}$$ will be called the tropical variety of $I$ and be denoted by $T(I)$.
\end{defn}

To be able to refer to it, we state the following theorem essentially proved in \cite[Theorem A]{BIGR}.

\begin{theorem}[\cite{BIGR}]\label{tropbasics}
Let $I\subset S_{\K}={\K}[x_1,\ldots,x_n]$ be a graded ideal. If we consider the tropical varieties as sets, we have:
\begin{enumerate}
\item $T(I)=T(\sqrt{I})=\bigcup_{P} T(P)$ where the union is taken over all minimal prime ideals $P$ of $I$.
\item If $I$ is prime with $\dim (S_{\K}/I)=m$ and $I$ does not contain a monomial, then $T(I)$ is a pure $m$-dimensional complex.
\item If $\dim (S_{\K}/I)=m$ and there exists a minimal prime $P$ of $I$ containing no monomial with $\dim (S_{\K}/P)=m$, then $\dim T(I)=m$.
\end{enumerate}
\end{theorem}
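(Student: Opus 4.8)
The plan is to reduce everything to the constant-coefficient case and then invoke the known structure theory of tropical varieties. For part (i), the point is that $T(I)$ only depends on whether $\inom_\omega(I)$ contains a monomial, and one checks $\inom_\omega(\sqrt I)=\sqrt{\inom_\omega(I)}$ (or at least that one contains a monomial iff the other does) using Proposition \ref{hilbinit} to control dimensions; since a monomial lies in $J$ iff it lies in $\sqrt J$, this gives $T(I)=T(\sqrt I)$ as sets. The decomposition $T(I)=\bigcup_P T(P)$ then follows because $\inom_\omega\bigl(\bigcap_P P\bigr)$ has radical $\bigcap_P \inom_\omega(P)$-ish behavior; more carefully, an initial form of an element of $\bigcap P$ is a monomial forces, after localizing/degenerating, one of the $\inom_\omega(P)$ to contain a monomial, and conversely $\inom_\omega(P)\supseteq \inom_\omega(I)$ gives the easy inclusion. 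First I would assemble these compatibility statements for $\inom_\omega$ under radicals and finite intersections; this is exactly the content of \cite[Theorem A]{BIGR} transported to the $S_L$ setting, so I would cite it and only indicate why the non-constant coefficients cause no trouble — the Hilbert-function invariance of Proposition \ref{hilbinit} is the key tool.

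For part (ii), suppose $I$ is prime, $\dim(S_L/I)=m$, and $I$ contains no monomial. I would appeal to the Bieri--Groves theorem: the tropical variety of a prime ideal is a pure polyhedral complex of dimension equal to the Krull dimension of the coordinate ring. In the Gröbner-complex formulation of \cite{MAST} this says $T(I)$, being the support of a subcomplex of $\GC(I)$ cut out by the monomial-freeness condition, is pure of dimension $m$. The non-constant-coefficient version is already in \cite{BIGR} (this is why the theorem is attributed there), so the proof is essentially a pointer; the one substantive check is that ``$I$ prime in $S_L$ and monomial-free'' is preserved well enough under the degenerations $\inom_\omega$ to run the dimension count — again Proposition \ref{hilbinit} does the bookkeeping on dimensions, and primeness is not literally needed pointwise, only that each top-dimensional Gröbner polyhedron in $T(I)$ has the right dimension.

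For part (iii), write $I=\bigcap_i P_i$ with the $P_i$ the minimal primes, and let $P$ be the given minimal prime with $\dim(S_L/P)=m$ and $P$ monomial-free. By part (i), $T(I)=\bigcup_i T(P_i)$ as sets, so $T(I)\supseteq T(P)$, and by part (ii) $T(P)$ is pure of dimension $m$; hence $\dim T(I)\ge m$. For the reverse inequality, each $P_i$ satisfies $\dim(S_L/P_i)\le m$ (since $\dim(S_L/I)=m$ is the max over minimal primes), so by part (ii) each $T(P_i)$ with $P_i$ monomial-free has dimension $\le m$, while the $P_i$ containing a monomial contribute $T(P_i)=\emptyset$; therefore $\dim T(I)=\max_i \dim T(P_i)\le m$. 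Combining gives $\dim T(I)=m$.

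The main obstacle I anticipate is not any single deep step but the verification that the classical/constant-coefficient arguments behind \cite{BIGR} and the Bieri--Groves purity theorem genuinely survive the passage to $S_L$ with the Gröbner-complex (rather than Gröbner-fan) formalism: one must make sure that ``subcomplex of $\GC(I)$'' is the correct ambient structure, that intersections and radicals interact with $\inom_\omega$ as expected when coefficients carry valuations, and that the dimension of a Gröbner polyhedron matches the dimension of the corresponding piece of $X(I)$ under $v$. Since the excerpt explicitly says this theorem is ``essentially proved in \cite[Theorem A]{BIGR}'', I would keep the write-up short: cite \cite{BIGR} for (i) and (ii), note the role of Proposition \ref{hilbinit}, and then give the two-line dimension argument above for (iii).
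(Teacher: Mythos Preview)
Your approach is correct and matches the paper's treatment: the paper gives no proof whatsoever for this theorem, merely stating it with the attribution ``essentially proved in \cite[Theorem A]{BIGR}'' and moving on. Your plan to cite \cite{BIGR} for (i) and (ii) and then derive (iii) from them by the elementary max-over-minimal-primes argument is exactly right, and in fact goes slightly beyond what the paper does, since the paper does not even spell out the two-line deduction of (iii).
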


To compute tropical varieties the concept of a tropical basis is useful. Let $I\subset S_L$ be a graded ideal. Then a finite system of homogeneous generators $f_1,\ldots,f_t$ of $I$ is called a \emph{tropical basis} of $I$ if $$T(I)=\bigcap_{i=1}^t T(f_i).$$

In the constant coefficient case every ideal has a tropical basis as was observed in \cite[Theorem 2.9]{BOJESPSTTH}. (The proof of that paper also works for other fields than $\C$.) In the non-constant coefficient case tropical bases, which use a restricted number of polynomials, are constructed in \cite{HETH}. The methods of rational projections used there will be important for the proof of our main theorem.

The main result of this paper can be summarized as follows. Generalizing the result \cite[Theorem 1.1]{TK} we prove that for a graded ideal in $S_L$ a generic Gr\"obner complex and a generic tropical variety exist for the notion of genericity described in the introduction and elaborated in Section \ref{genericity}. More precisely, let $I\subset S_L$ be a graded ideal and $V\subset \GL_n(K)$ be a closed subvariety. Then there exists a Zariski-open set $\emptyset\neq U\subset V$ such that $\GC(g(I))$ and $T(g(I))$ are the same polyhedral complexes (respectively) for all $g\in U$. Moreover, there exists a notion of a generic tropical basis in the second case. This is stated and proved in Theorem \ref{gengrobcomp} for the Gr\"obner complex and in Theorem \ref{thm:main} for the tropical variety.

\section{Valued Fields}\label{valuedfields}

In this section the fields and valuations will be introduced, that are used in the following.

Let $K$ be a field endowed with the \emph{trivial valuation} $v$, where $v(0)=\infty$ and $v(a)=0$ for $a\in K^*$. This valuation gives rise to the so called \emph{constant coefficient case} in tropical geometry. To define tropical varieties in a meaningful way, however, field extensions of $K$ with a richer image are needed which inherit certain properties of $K$ depending on the setting. There are various possibilities to construct such field extensions. A prominent example for a valued field $(\K,v)$ extending $K$ with $v(\K)=\R\cup \left\{\infty\right\}$ is the \emph{field of generalized power series over $K$} which will be the construction used in the following. This is a special case of the following definition.

\begin{defn}
Let $R$ be a domain. The set $$R\left\{\!\!\left\{t\right\}\!\!\right\}=\left\{\sum_{\nu\in\R}c_{\nu}t^{\nu}: c_{\nu}\in R \text{ and } \left\{\nu: c_{\nu}\neq 0\right\} \text{ is well-ordered }\right\}$$ is called the \emph{ring of generalized power series over $R$}.
\end{defn}

Recall that this set with addition and multiplication analogously to those of polynomials is indeed a domain, see \cite[(1.14)]{RI}. If ${\kx}$ is an algebraically closed field, then so is ${\kx}\left\{\!\!\left\{t\right\}\!\!\right\}$, see \cite[(2.1) and (5.2)]{RI}. In this case there is a natural valuation on ${\kx}\left\{\!\!\left\{t\right\}\!\!\right\}$ defined by

\begin{eqnarray*}
v: {\kx}\left\{\!\!\left\{t\right\}\!\!\right\} & \longrightarrow & \R\cup\left\{\infty\right\}\\
f=\sum_{\nu\in\R}c_{\nu}t^{\nu}  & \longmapsto & \min\left\{\nu: c_{\nu}\neq 0\right\} \text{ if } f\neq 0
\end{eqnarray*}
and $v(0)=\infty$.  It is useful to view ${\kx}\left\{\!\!\left\{t\right\}\!\!\right\}$ as a valued field extension of ${\kx}$ endowed with the trivial valuation.

\begin{notation}\label{kandK}
In the following ${\kx}$ will always denote an algebraically closed field equipped with the trivial valuation and ${\K}={\kx}\left\{\!\!\left\{t\right\}\!\!\right\}$ the field of generalized power series over ${\kx}$ with the natural valuation as defined above.
\end{notation}

To obtain results on tropical varieties under generic coordinate transformations it is useful to first consider the coefficients of these coordinate changes as independent variables, see \cite{TK}. For a finite set of independent variables $Y$ and a field ${\kx}$ or $\K$ one can then do the necessary computations in ${\kx}(Y)$ or $\K(Y)$, respectively. Afterwards the desired coefficients for these variables can be substituted, see Section \ref{genericity} for the details. The main technical problem with this is that the field extension by $Y$ does not commute with taking the field of generalized power series, i.e. ${\kx}(Y)\left\{\!\!\left\{t\right\}\!\!\right\}$ is not canonically isomorphic to ${\kx}\left\{\!\!\left\{t\right\}\!\!\right\}(Y)$. The rest of this section is devoted to establishing a setting which copes with this difficulty.

Let $Y$ be a finite set of independent variables over ${\K}$ and consider the canonical inclusion of polynomial rings ${\kx}[Y] \hookrightarrow {\K}[Y]$. Moreover, for a domain $R$ let $Q(R)$ denote its quotient field. We need the following result:

\begin{prop}
Let ${\kx},{\K}$ and $Y$ be as defined above and $P\subset {\kx}[Y]$ be a prime ideal. Then there is a canonical inclusion of rings $${\K}[Y]/P{\K}[Y] \hookrightarrow ({\kx}[Y]/P)\left\{\!\!\left\{t\right\}\!\!\right\}.$$ In particular, $P{\K}[Y]$ is a prime ideal in ${\K}[Y]$ and we have a natural field extension $$Q({\K}[Y]/P{\K}[Y]) \hookrightarrow Q({\kx}[Y]/P)\left\{\!\!\left\{t\right\}\!\!\right\}.$$
\end{prop}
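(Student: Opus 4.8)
The plan is to construct the inclusion $\K[Y]/P\K[Y] \hookrightarrow (\kx[Y]/P)\{\!\{t\}\!\}$ directly and then deduce the other assertions formally. First I would observe that $\K = \kx\{\!\{t\}\!\}$ is, as an additive group, the set of formal sums $\sum_{\nu} c_\nu t^\nu$ with $c_\nu \in \kx$ and well-ordered support. Hence an element of $\K[Y]$ is a polynomial in $Y$ whose coefficients are such series; collecting terms by the exponent of $t$, every element of $\K[Y]$ can be written uniquely as $\sum_\nu g_\nu t^\nu$ with $g_\nu \in \kx[Y]$ and well-ordered support (here one uses that a polynomial has only finitely many $Y$-monomials, so the union of the supports of its coefficients is again well-ordered). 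This identifies $\K[Y]$ with $(\kx[Y])\{\!\{t\}\!\}$ as rings — one checks that addition and the Cauchy-type multiplication match on both sides. So the ring map in question will be the one induced by the reduction $\kx[Y] \twoheadrightarrow \kx[Y]/P$ applied coefficientwise to the $t$-expansion.

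The key step is then to show that this coefficientwise reduction map $\varphi\colon (\kx[Y])\{\!\{t\}\!\} \to (\kx[Y]/P)\{\!\{t\}\!\}$, $\sum g_\nu t^\nu \mapsto \sum \overline{g_\nu} t^\nu$, is a well-defined ring homomorphism with kernel exactly $P\K[Y]$. Well-definedness on supports is immediate since reduction can only shrink the support. That $\varphi$ respects addition is clear; for multiplication one writes out the coefficient of $t^\nu$ in a product as a finite sum of products $g_{\mu} h_{\nu-\mu}$ (finiteness of this sum is part of the definition of the generalized power series product, using well-orderedness) and applies the ring homomorphism $\kx[Y]\to\kx[Y]/P$ term by term. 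For the kernel, clearly $P\{\!\{t\}\!\}$, which is $P\K[Y]$ under our identification, lies in $\ker\varphi$; conversely if $\sum g_\nu t^\nu \mapsto 0$ then every $g_\nu \in P$, and the well-ordered sum $\sum g_\nu t^\nu$ is then an element of $P\K[Y]$ — here one should note that even an infinite series with all coefficients in $P$ does lie in the ideal generated by $P$ inside $\K[Y]$, which again follows from the identification $P\K[Y] = P\{\!\{t\}\!\}$. Thus $\varphi$ induces an injection $\K[Y]/P\K[Y] \hookrightarrow (\kx[Y]/P)\{\!\{t\}\!\}$.

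The remaining assertions follow for free. Since $\kx[Y]/P$ is a domain, so is its ring of generalized power series (by \cite[(1.14)]{RI} as recalled in the excerpt); an ideal whose quotient injects into a domain is prime, so $P\K[Y]$ is prime in $\K[Y]$. Finally, passing to quotient fields is functorial for injections of domains, so the inclusion $\K[Y]/P\K[Y]\hookrightarrow(\kx[Y]/P)\{\!\{t\}\!\}$ induces a field extension $Q(\K[Y]/P\K[Y])\hookrightarrow Q((\kx[Y]/P)\{\!\{t\}\!\})$, and the latter field is by definition $Q(\kx[Y]/P)\{\!\{t\}\!\}$ once one knows that the ring of generalized power series over a domain has the same quotient field as the ring of generalized power series over its fraction field — which is itself the content of \cite[(1.14)]{RI} together with the observation that every nonzero series over $\kx[Y]/P$ becomes a unit after multiplying by a suitable series over $Q(\kx[Y]/P)$.

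I expect the main obstacle to be the bookkeeping around well-orderedness: verifying that the ``collect terms by power of $t$'' identification $\K[Y] \cong (\kx[Y])\{\!\{t\}\!\}$ is actually a ring isomorphism requires checking that sums and (Cauchy) products of well-ordered-support families again have well-ordered support, and that the finiteness needed to make the product well-defined is preserved under the coefficientwise reduction. None of this is deep, but it is exactly the point where ``$\kx(Y)\{\!\{t\}\!\}$ is not $\kx\{\!\{t\}\!\}(Y)$'' could bite, so it has to be done carefully rather than waved through.
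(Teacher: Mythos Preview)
Your approach is essentially the paper's: define the map by swapping the order of summation (your identification $\K[Y]\cong(\kx[Y])\{\!\{t\}\!\}$ followed by coefficientwise reduction is exactly the paper's $\psi$), then compute the kernel. There are, however, two soft spots worth flagging.

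First, your justification that $\ker\varphi\subset P\K[Y]$ is circular: you say a series $\sum_\nu g_\nu t^\nu$ with all $g_\nu\in P$ lies in $P\K[Y]$ ``because of the identification $P\K[Y]=P\{\!\{t\}\!\}$'', but that identification \emph{is} the nontrivial inclusion you are trying to establish. The fix is easy: $\kx[Y]$ is Noetherian, so $P=(p_1,\ldots,p_r)$; write each $g_\nu=\sum_j c_{\nu,j}p_j$ (choosing $c_{\nu,j}=0$ whenever $g_\nu=0$, so each family $(c_{\nu,j})_\nu$ has support contained in that of $(g_\nu)_\nu$, hence well-ordered), and collect to get $\sum_\nu g_\nu t^\nu=\sum_j p_j\bigl(\sum_\nu c_{\nu,j}t^\nu\bigr)\in P\K[Y]$. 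The paper handles this point by a different route: it inducts on the number of $Y$-monomials appearing in $h$, using one coefficient relation $\sum_\nu a_{\nu\mu_0}y^\nu\in P$ to eliminate one monomial at a time, thereby producing an explicit expression $h=\sum p_ih_i$ without invoking Noetherianity.

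Second, your claim that $Q\bigl((\kx[Y]/P)\{\!\{t\}\!\}\bigr)=Q(\kx[Y]/P)\{\!\{t\}\!\}$ is doubtful in general (clearing infinitely many denominators over a domain need not be possible), and in any case the proposition does not assert it. All that is needed is the inclusion $Q(\K[Y]/P\K[Y])\hookrightarrow Q(\kx[Y]/P)\{\!\{t\}\!\}$, which follows immediately: $\K[Y]/P\K[Y]$ embeds in the domain $(\kx[Y]/P)\{\!\{t\}\!\}$, which in turn sits inside the field $Q(\kx[Y]/P)\{\!\{t\}\!\}$, and the universal property of fraction fields gives the field extension. That is exactly how the paper concludes.
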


\begin{proof}
Note that every element $h\in {\K}[Y]$ is a formal sum $\sum_{\nu} (\sum_{\mu\in \R} a_{\nu\mu} t^{\mu})y^{\nu}$, where we abbreviate $y_1^{\nu_1}\cdots y_m^{\nu_m}$ by $y^\nu$ for $\nu\in \N^m$, the set of $\nu$ appearing as exponents is finite and for every $\nu$ the set $\left\{\mu: a_{\nu\mu}\neq 0\right\}$ is well-ordered.

We define the ring homomorphism
\begin{eqnarray*}
\psi: {\K}[Y] & \longrightarrow & ({\kx}[Y]/P)\left\{\!\!\left\{t\right\}\!\!\right\}\\
\sum_{\nu} (\sum_{\mu\in \R} a_{\nu\mu} t^{\mu})y^{\nu} & \longmapsto & \sum_{\mu\in \R} (\sum_{\nu} a_{\nu\mu}y^{\nu}+P) t^{\mu},
\end{eqnarray*}
which is well-defined, since for a given $\mu$ there exist only finitely many $\nu$, such that $a_{\nu\mu}\neq 0$. We show that $\ker \psi =P{\K}[Y]$. First note that for $p\in P\subset {\kx}[Y]\hookrightarrow {\K}[Y]$ we have $\psi(p)=0$ by definition. Since $\psi$ is a ring homomorphism, this implies $$\psi(\sum p_ih_i)=\sum \psi(p_i)\psi(h_i)=0$$ for every finite sum with $p_i\in P$ and $h_i\in {\K}[Y]$. Hence, $P{\K}[Y]\subset \ker \psi$.

For the other inclusion let $h=\sum_{\nu} (\sum_{\mu\in \R} a_{\nu\mu} t^{\mu})y^{\nu}\in \ker \psi$. Then $$\sum_{\mu\in \R} (\sum_{\nu} a_{\nu\mu}y^{\nu}+P) t^{\mu}=0,$$ so $\sum_{\nu}a_{\nu\mu}y^{\nu}\in P$ for every $\mu$ appearing. Choose an exponent $\nu_0\in\N^m$ with non-zero coefficient $\sum_{\mu\in\R} a_{\nu_0\mu}t^{\mu}$ in $h$. Furthermore, choose $\mu_0$, such that $a_{\nu_0\mu_0}\neq 0$. Since $$p_1:=\frac{1}{a_{\nu_0\mu_0}}\sum_{\nu} a_{\nu\mu_0}y^{\nu}\in P,$$ we can write $y^{\nu_0}=\sum_{\nu\neq\nu_0} a'_{\nu\mu_0}y^{\nu}+p_1$, where $a'_{\nu\mu_0}=-a_{\nu\mu_0}/a_{\nu_0\mu_0}$. Hence, 
\begin{eqnarray*}
h & = & (\sum_{\mu\in \R} a_{\nu_0\mu}t^{\mu})y^{\nu_0}+ \sum_{\nu\neq \nu_0} (\sum_{\mu\in \R} a_{\nu\mu} t^{\mu})y^{\nu}\\
  & = & (\sum_{\mu\in \R} a_{\nu_0\mu}t^{\mu})(\sum_{\nu\neq\nu_0} a'_{\nu\mu_0}y^{\nu}+p_1)+\sum_{\nu\neq \nu_0} (\sum_{\mu\in \R} a_{\nu\mu} t^{\mu})y^{\nu}\\
  & = & p_1 h_1 + \sum_{\nu\neq\nu_0} (\sum_{\mu\in\R} b_{\nu\mu}t^{\mu})y^{\nu},
\end{eqnarray*}
where $p_1\in P$, $h_1\in {\K}[Y]$ and the right part is a polynomial in ${\K}[Y]$ containing one less term than $h$. By induction on the number of terms of $h$ we obtain a finite expression $h=\sum p_ih_i$ with $p_i\in P$, $h_i\in {\K}[Y]$, so $h\in P{\K}[Y]$. This shows that $P{\K}[Y]= \ker \psi$. The map $\psi$ therefore induces a canonical inclusion $${\K}[Y]/P{\K}[Y] \hookrightarrow ({\kx}[Y]/P)\left\{\!\!\left\{t\right\}\!\!\right\}.$$ In particular, $P{\K}[Y]$ is a prime ideal, as $({\kx}[Y]/P)\left\{\!\!\left\{t\right\}\!\!\right\}$ is a domain. Moreover, since $Q({\kx}[Y]/P)\left\{\!\!\left\{t\right\}\!\!\right\}$ is a field, this inclusion induces the desired field extension $$Q({\K}[Y]/P{\K}[Y]) \hookrightarrow Q({\kx}[Y]/P)\left\{\!\!\left\{t\right\}\!\!\right\}.$$
\end{proof}

The field $Q({\K}[Y]/P{\K}[Y])$ will play a fundamental role in the following sections, since it provides the right tool to deal with ``genericity'' on the subvariety $V\subset \GL_n({\kx})$ which is the zero-set of $P$, see Section \ref{genericity} for this notion.

With the above result we obtain a natural valuation on $Q({\K}[Y]/P{\K}[Y])$ which extends the valuation on ${\K}$.

\begin{cor}\label{valfieldext}
The chain of inclusions $${\kx}\left\{\!\!\left\{t\right\}\!\!\right\}={\K}\hookrightarrow Q({\K}[Y]/P{\K}[Y]) \hookrightarrow Q({\kx}[Y]/P)\left\{\!\!\left\{t\right\}\!\!\right\}$$ is an inclusion of valued fields, where the valuations on ${\K}$ and $Q({\kx}[Y]/P)\left\{\!\!\left\{t\right\}\!\!\right\}$ are the natural valuations as fields of generalized power series and the valuation on $Q({\K}[Y]/P{\K}[Y])$ is the restriction of the one on $Q({\kx}[Y]/P)\left\{\!\!\left\{t\right\}\!\!\right\}$.
\end{cor}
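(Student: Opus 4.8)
The plan is to verify that each of the two field inclusions from the preceding proposition is compatible with the stated valuations, after which the composite being an inclusion of valued fields is immediate. The essential content is entirely in the right-hand inclusion $Q({\K}[Y]/P{\K}[Y]) \hookrightarrow Q({\kx}[Y]/P)\left\{\!\!\left\{t\right\}\!\!\right\}$, since the left-hand inclusion ${\kx}\left\{\!\!\left\{t\right\}\!\!\right\}\hookrightarrow Q({\K}[Y]/P{\K}[Y])$ is then automatically a valued field inclusion: the valuation on $Q({\K}[Y]/P{\K}[Y])$ is \emph{defined} as the restriction of the one on $Q({\kx}[Y]/P)\left\{\!\!\left\{t\right\}\!\!\right\}$, and the latter restricts on ${\kx}\left\{\!\!\left\{t\right\}\!\!\right\}={\K}$ to its natural valuation because the embedding ${\kx}\left\{\!\!\left\{t\right\}\!\!\right\}\hookrightarrow ({\kx}[Y]/P)\left\{\!\!\left\{t\right\}\!\!\right\}$ sends $\sum_\nu c_\nu t^\nu$ to $\sum_\nu (c_\nu + P)t^\nu$, i.e.\ it is induced coefficientwise by the inclusion ${\kx}\hookrightarrow {\kx}[Y]/P$, and the latter is injective (so no cancellation of leading terms can occur) — hence the minimal exponent with nonzero coefficient is preserved.

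So the real task is to show that for $h\in {\K}[Y]$ the value $v(\psi(h))$, computed in $({\kx}[Y]/P)\left\{\!\!\left\{t\right\}\!\!\right\}$, agrees with the ``$t$-adic'' valuation of $h$ one would expect — and more precisely that the map $\psi$ from the proposition, extended to fraction fields, respects valuations. First I would make explicit what valuation on $Q({\K}[Y]/P{\K}[Y])$ we are comparing things to: by Corollary's own statement it is just the restriction, so strictly there is nothing to prove about $Q({\K}[Y]/P{\K}[Y])$ in isolation — the Corollary is really the assertion that this restricted valuation is well-behaved, namely that it genuinely extends $v$ on ${\K}$ and that the three maps are ring homomorphisms compatible with the valuations, which is what ``inclusion of valued fields'' means. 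Thus the proof reduces to three checks: (1) the maps are injective ring homomorphisms — already established in the proposition; (2) ${\K}\hookrightarrow Q({\K}[Y]/P{\K}[Y])$ is valuation-preserving, which follows as above from injectivity of ${\kx}\hookrightarrow {\kx}[Y]/P$ ensuring that the leading $t$-coefficient of an element of ${\K}$ stays nonzero in the quotient; (3) $Q({\K}[Y]/P{\K}[Y]) \hookrightarrow Q({\kx}[Y]/P)\left\{\!\!\left\{t\right\}\!\!\right\}$ is valuation-preserving, which is a tautology given how that valuation was defined.

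The one point requiring a genuine (if short) argument is check (2): one must rule out the possibility that passing from ${\kx}[Y]$ to ${\kx}[Y]/P$ kills the coefficient of the lowest power of $t$ appearing in an element of ${\K}$. For $f=\sum_{\nu}c_\nu t^\nu\in{\K}={\kx}\left\{\!\!\left\{t\right\}\!\!\right\}$ with $f\neq 0$, let $\nu_0=\min\{\nu:c_\nu\neq 0\}=v(f)$; then $\psi(f)=\sum_\nu (c_\nu+P)t^\nu$ and its valuation is the least $\nu$ with $c_\nu\notin P$. Since each $c_\nu$ is a nonzero \emph{scalar} in ${\kx}$ and $P\subset{\kx}[Y]$ is a prime ideal not containing $1$ (as $P$ is the ideal of a subvariety $V$ of $\GL_n({\kx})$, in particular $V\neq\emptyset$, so $P\neq{\kx}[Y]$), we have $c_{\nu_0}\notin P$, hence $v(\psi(f))=\nu_0=v(f)$. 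The set-up section guarantees $V$ is non-empty (it is a \emph{subvariety} of $\GL_n$), so $P$ is proper and this works. I would record this as the only substantive line of the proof; everything else is bookkeeping about how the valuation on the middle field was defined in the first place.

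\begin{proof}
By the previous proposition we have injective ring homomorphisms
$${\K}\hookrightarrow Q({\K}[Y]/P{\K}[Y]) \hookrightarrow Q({\kx}[Y]/P)\left\{\!\!\left\{t\right\}\!\!\right\},$$
the first being the inclusion ${\K}={\kx}\left\{\!\!\left\{t\right\}\!\!\right\}\hookrightarrow {\K}[Y]/P{\K}[Y]\subset Q({\K}[Y]/P{\K}[Y])$ and the composite being (the fraction-field extension of) the map $\psi$. Since by definition the valuation on $Q({\K}[Y]/P{\K}[Y])$ is the restriction of the natural valuation of $Q({\kx}[Y]/P)\left\{\!\!\left\{t\right\}\!\!\right\}$, the right-hand inclusion is an inclusion of valued fields, and it suffices to check that the left-hand inclusion preserves valuations, i.e.\ that $v(\psi(f))=v(f)$ for all $f\in{\K}$.

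Let $0\neq f=\sum_{\nu\in\R}c_\nu t^\nu\in{\kx}\left\{\!\!\left\{t\right\}\!\!\right\}$, with $c_\nu\in{\kx}$ and $\{\nu:c_\nu\neq 0\}$ well-ordered, and put $\nu_0=v(f)=\min\{\nu:c_\nu\neq 0\}$. Under $\psi$ this maps to $\sum_{\nu\in\R}(c_\nu+P)t^\nu\in({\kx}[Y]/P)\left\{\!\!\left\{t\right\}\!\!\right\}$, whose valuation is the least $\nu$ with $c_\nu+P\neq 0$ in ${\kx}[Y]/P$. Now $c_{\nu_0}$ is a nonzero element of ${\kx}$, and since $P\subset{\kx}[Y]$ is a proper ideal (it is the vanishing ideal of the non-empty subvariety $V\subset\GL_n({\kx})$, hence $P\neq{\kx}[Y]$) the composite ${\kx}\hookrightarrow{\kx}[Y]\twoheadrightarrow{\kx}[Y]/P$ is injective, so $c_{\nu_0}\notin P$. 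Therefore $v(\psi(f))=\nu_0=v(f)$, as claimed.

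Consequently all three maps in the displayed chain are injective ring homomorphisms compatible with the respective valuations, which is precisely the statement that the chain is an inclusion of valued fields.
\end{proof}
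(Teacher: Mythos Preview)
Your proof is correct. The paper states this corollary without proof, treating it as an immediate consequence of the preceding proposition; your argument supplies exactly the details one would fill in, namely that the restriction of the power-series valuation on $Q({\kx}[Y]/P)\left\{\!\!\left\{t\right\}\!\!\right\}$ back to ${\K}$ recovers the original valuation because nonzero constants of ${\kx}$ survive passage to ${\kx}[Y]/P$. One minor remark: you justify $P\neq{\kx}[Y]$ by invoking $V\neq\emptyset$, but at this point in the paper $P$ is simply assumed to be a prime ideal of ${\kx}[Y]$ (the interpretation as the vanishing ideal of a subvariety $V\subset\GL_n({\kx})$ only enters in the subsequent Notation), and prime ideals are proper by convention, so the appeal to $V$ is not needed.
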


We will use the following notation.

\begin{notation}\label{kv}
Let ${\kx}\hookrightarrow {\K}$ be as in Notation \ref{kandK} and $V\subset \GL_n({\kx})$ be a subvariety defined by a prime ideal $P$. In the following ${\K}(V)$ will always denote the field $Q({\K}[Y]/P{\K}[Y])$ as constructed above with the valuation of Corollary \ref{valfieldext}. In addition ${\kx}(V)$ will denote the quotient field $Q({\kx}[Y]/P)$ equipped with the trivial valuation.
\end{notation}

\section{Genericity}\label{genericity}

As introduced in Section \ref{valuedfields} we consider the field extension ${\kx}\hookrightarrow {\K}$ of valued fields, where ${\kx}$ is an algebraically closed field equipped with the trivial valuation and ${\K}={\kx}\left\{\!\!\left\{t\right\}\!\!\right\}$ is the field of generalized power series over ${\kx}$.

In this section we will specify the meaning of the term \emph{generic} for this note and introduce the notation used here. This notion of genericity differs from the one used in \cite{E,GRE} and also so in \cite{TK,TK2} in two ways. First of all we will not consider arbitrary coordinate transformations. Since we are dealing with valued fields and the valuation has a great influence on taking initial ideals as introduced in Section \ref{groebnercompl}, coordinate transformations involving field elements of non-zero valuation will not yield any ``generic'' results, see \cite[Remark 2.8]{TK}. We therefore only consider coordinate transformations by elements of $\GL_n({\kx})$ instead of the whole general linear group $\GL_n({\K})$.

Moreover, we will generalize the meaning of ``generic'' to arbitrary irreducible subvarieties of $\GL_n({\kx})$. We will consider $\GL_n({\kx})$ as an affine ${\kx}$-space of dimension $n^2$ equipped with the Zariski topology. In the classical setting in Gr\"obner basis theory the term generic is used, if there exists a non-empty Zariski-open subset $U\subset \GL_n({\kx})$, such that all $g\in U$ fulfill a given condition. Such a set $U$ is by definition of the Zariski topology dense in $\GL_n({\kx})$, so the name ``generic'' is justified. By a subvariety of $V\subset\GL_n({\kx})$ we will mean a non-empty irreducible closed subset.  As we would like to deal with properties of subvarieties of $V$ as well, we will use the notion ``generic for $V$'', meaning there is a non-empty Zariski-open subset of $V$ (in the induced topology) satisfying the given condition. In particular, this allows us to extend our results to algebraic subgroups of $\GL_n({\kx})$ as well, see Section \ref{examples} for a discussion of the subgroup of diagonal matrices. In addition this concept can yield results on the number different outcomes for all coordinate transformations in $\GL_n({\kx})$, see Corollary \ref{endlmoegl}.

To handle generic coordinate transformations the following ${\K}$-algebra homomorphism plays a fundamental role.

\begin{defn}\label{gendef}
Let $Y=\left\{y_{ij}: i,j=1,\ldots,n\right\}$ be a set of $n^2$ independent variables over ${\kx}$ and $V\subset \GL_n({\kx})$ be a subvariety defined by the prime ideal $P\subset {\kx}[Y]$. Let ${\K}(V)$ be the field extension of ${\K}$ from Notation \ref{kv}. In the following we consider the ${\K}$-algebra homomorphism induced by

\begin{eqnarray*}
y: {\K}[x_1,\ldots,x_n] & \longrightarrow & {\K}(V)[x_1,\ldots, x_n]\\
                 x_i & \longmapsto     & \sum_{j=1}^n (y_{ji}+P{\K}[Y]) x_j.
\end{eqnarray*}

For any $g=(g_{ij})\in V\subset \GL_n({\kx})$ this induces an ${\K}$-algebra automorphism on
${\K}[x_1,\ldots,x_n]$ by substituting $g_{ij}$ for $y_{ij}$. We identify $g$ with the induced
automorphism and use the notation $g$ for both of them.
\end{defn}

In addition we will sometimes use the restricted ${\kx}$-algebra homomorphism induced by

\begin{eqnarray*}
y: {\kx}[x_1,\ldots,x_n] & \longrightarrow & {\kx}(V)[x_1,\ldots, x_n]\\
                 x_i & \longmapsto     & \sum_{j=1}^n (y_{ji}+P) x_j.
\end{eqnarray*}

Note that for any $g\in\GL_n({\kx})$ the ideal $g(I)$ is a graded ideal isomorphic to $I$ as a graded ${\K}$-module. In particular, $g(I)$ has the same Hilbert function as $I$. On the other hand the set $y(I)\subset {\K}(V)[x_1,\ldots, x_n]$ is not an ideal in general. In this case we will be interested in the ideal generated by $y(I)$ in ${\K}(V)[x_1,\ldots, x_n]$ and by abuse of notation denote this ideal by $y(I)$. Moreover, we will sometimes denote a polynomial in ${\K}(V)[x_1,\ldots, x_n]$ in the form $f(y)$ to emphasize its dependence on the $y_{ij}$. Analogously, we denote the polynomial obtained by substituting $g\in\GL_n(K)$ for $y$ (if this is possible, i.e. if no denominator of the coefficients in the $y_{ij}$ vanishes) by $f(g)$.

In the situation that ${\K}$ is algebraically closed, this extension of ideals preserves the main structural features of ideals which are important to us. This is due to the following proposition.

\begin{prop}\label{extkrempel}
Let ${\K}\subset {\K}'$ be a field extension and consider an $S_L$-algebra inclusion $$S_{\K}={\K}[x_1,\ldots,x_n]\hookrightarrow S_{{\K}'}={\K}'[x_1,\ldots,x_n].$$ For any prime ideal $P\subset S_{\K}$ the extension $PS_{{\K}'}$ is also prime. If $P_1,\ldots,P_s$ are the minimal primes of an arbitrary graded ideal $I\subset S_{\K}$, then $P_1S_{{\K}'},\ldots,P_sS_{{\K}'}$ are the minimal primes of $IS_{{\K}'}$. Moreover, for each homogeneous component of $I$ we have $\dim_{\K} I_d=\dim_{{\K}'} (IS_{{\K}'})_d$. This implies that the Hilbert functions and, hence, the Krull dimensions of $I$ and $IS_{{\K}'}$ coincide.
\end{prop}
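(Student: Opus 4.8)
The plan is to reduce everything to standard facts about extension of ideals along a field extension, which one can find in commutative algebra texts (e.g. Bourbaki or Matsumura), and then bootstrap from the prime case to the general case. First I would treat the statement about primes: given a prime $P\subset S_{\K}$, I want to show $PS_{{\K}'}$ is prime. The cleanest route is to observe that $S_{{\K}'}=S_{\K}\otimes_{\K}{\K}'$ as ${\K}$-algebras, hence $S_{{\K}'}/PS_{{\K}'}\cong (S_{\K}/P)\otimes_{\K}{\K}'$. Since $S_{\K}/P$ is a domain and, crucially, ${\K}$ is algebraically closed (this is where the hypothesis of the proposition, namely that ${\K}$ is algebraically closed, enters), $S_{\K}/P$ is a domain that is geometrically integral over ${\K}$; therefore its base change to any extension field ${\K}'$ remains a domain. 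Concretely, ${\K}$ algebraically closed implies ${\K}$ is separably closed, so ${\K}$ is perfect and relatively algebraically closed in the fraction field of $S_{\K}/P$; by the standard criterion for geometric integrality, $(S_{\K}/P)\otimes_{\K}{\K}'$ is then an integral domain. Hence $PS_{{\K}'}$ is prime.

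Next I would handle the minimal primes of a general graded ideal $I$. Let $P_1,\dots,P_s$ be the minimal primes of $I$, so that $\sqrt I=P_1\cap\cdots\cap P_s$. Base change along the flat (indeed faithfully flat) map $S_{\K}\hookrightarrow S_{{\K}'}$ commutes with finite intersections of ideals, so $(P_1\cap\cdots\cap P_s)S_{{\K}'}=P_1S_{{\K}'}\cap\cdots\cap P_sS_{{\K}'}$, and each $P_iS_{{\K}'}$ is prime by the previous paragraph. Moreover $\sqrt{IS_{{\K}'}}=\sqrt{(\sqrt I)S_{{\K}'}}=\sqrt{(\bigcap P_i)S_{{\K}'}}=\bigcap P_iS_{{\K}'}$, using that radicals are compatible with flat base change. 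This exhibits $\sqrt{IS_{{\K}'}}$ as an intersection of primes; to conclude these are exactly the minimal primes, it suffices to check there are no containments among the $P_iS_{{\K}'}$, and a containment $P_iS_{{\K}'}\subseteq P_jS_{{\K}'}$ would contract to $P_i\subseteq P_j$ by faithful flatness, contradicting minimality. (One should also note each $P_iS_{{\K}'}\supseteq IS_{{\K}'}$, which is clear.)

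Finally, for the Hilbert function statement, I would argue degree by degree. Since ${\K}'$ is flat — in fact free — as a ${\K}$-module, tensoring the exact sequence $0\to I_d\to (S_{\K})_d\to (S_{\K}/I)_d\to 0$ with ${\K}'$ stays exact, and $I_d\otimes_{\K}{\K}'$ is precisely the degree-$d$ homogeneous component of $IS_{{\K}'}$ because the extension of ideals is computed componentwise in each fixed degree (the grading on $S_{{\K}'}$ restricts to the given grading on $S_{\K}$, and $(S_{\K})_d\otimes_{\K}{\K}'=(S_{{\K}'})_d$). Hence $\dim_{{\K}'}(IS_{{\K}'})_d=\dim_{{\K}'}(I_d\otimes_{\K}{\K}')=\dim_{\K}I_d$, and likewise for the quotient, so the Hilbert functions agree, and equality of Krull dimensions follows since the Krull dimension of a standard graded algebra is read off from the eventual polynomial growth of its Hilbert function.

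The main obstacle is the preservation of primality, i.e. the geometric-integrality input; the rest is formal flat-base-change bookkeeping. Since the paper explicitly records that ${\K}$ being algebraically closed is needed \emph{for this proposition}, I would make sure the proof isolates that point clearly — it is exactly what guarantees that a domain quotient $S_{\K}/P$ stays a domain after arbitrary field extension, and without it $PS_{{\K}'}$ could fail to be prime (or even radical).
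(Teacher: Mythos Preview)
Your argument is correct and essentially matches the paper's: both use that ${\K}$ is algebraically closed to get geometric integrality of $S_{\K}/P$ (the paper cites this as Hartshorne, Chapter~II, Exercise~3.15), and both obtain the Hilbert-function statement from $(IS_{{\K}'})_d = I_d \otimes_{\K} {\K}'$. For the minimal-primes claim the paper argues via going-down for the flat extension $S_{\K}\hookrightarrow S_{{\K}'}$ to establish $PS_{{\K}'}\cap S_{\K} = P$ and then checks minimality directly, whereas you package the same flatness input as preservation of finite intersections plus faithful-flatness contraction---a cosmetic reorganization of the same content.
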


\begin{proof}
See \cite[Chapter II, Exercise 3.15]{H} for the first statement, which can be applied, since ${\K}$ is algebraically closed.

The second statement follows from the fact that ``going down'' holds for flat extensions: For a prime ideal $P\subset S_{\K}$ we first show that $PS_{{\K}'}\cap S_{\K}=P$. The inclusion $PS_{{\K}'}\cap S_{\K}\supset P$ is clear. Since $S_{\K}\hookrightarrow S_{{\K}'}$ is flat, by ``going down'' (see \cite[Lemma 10.11]{E}) we have $Q\cap S_{\K}=P$ for any minimal prime $Q\subset S_{{\K}'}$ over $PS_{{\K}'}$. But since $PS_{{\K}'}$ is itself prime by the first statement, this implies $PS_{{\K}'}\cap S_{\K}=P$.

From this it follows directly that the minimal primes of $I$ and $IS_{{\K}'}$ correspond to each other: Let $P$ be a minimal prime of $I$. Then $IS_{{\K}'}\subset PS_{{\K}'}$ and $PS_{{\K}'}$ is prime. Assume that there is a prime ideal $Q\subset S_{{\K}'}$ with $IS_{{\K}'}\subset Q\subset PS_{{\K}'}$. By contracting and the above result we have $I\subset Q\cap S_{\K}\subset P$ and $P$ is minimal over $I$, so $Q\cap S_{\K}=P$. Thus $PS_{{\K}'}=(Q\cap S_{\K})S_{{\K}'}\subset Q$, which implies $Q= PS_{{\K}'}$. Hence, all the ideals $P_1S_{{\K}'},\ldots,P_sS_{{\K}'}$ are minimal primes of $IS_{{\K}'}$. To show that there can be no other minimal primes let $Q$ be any minimal prime of $IS_{{\K}'}$. Then $I\subset (Q\cap S_{\K})$, the latter of which is prime. Since $P_1,\ldots,P_s$ are the minimal primes of $I$, there exists an index $l$, such that $I\subset P_l\subset (Q\cap S_{\K})$. So $$IS_{{\K}'}\subset P_lS_{{\K}'}\subset (Q\cap S_{\K})S_{{\K}'}\subset Q.$$ Thus $P_lS_{{\K}'}=Q$, since both are minimal primes. This proves the second claim.

To prove the last claim let $h_1,\ldots,h_D$ be an ${\K}$-vector space basis of $I_d$, so $I_d=\bigoplus_{i=1}^D {\K}\cdot h_i$. Since $(IS_{{\K}'})_d=I_d\otimes_{\K} {\K}'$ as an ${\K}'$-vector space, we have $$(IS_{{\K}'})_d=I_d \otimes_{\K} {\K}'=(\bigoplus_{i=1}^D {\K}\cdot h_i)\otimes_{\K} {\K}'=\bigoplus_{i=1}^D ({\K} \otimes_{\K} {\K}')\cdot h_i=\bigoplus_{i=1}^D {\K}'\cdot h_i,$$ as the tensor product commutes with direct sums. Hence, $(IS_{{\K}'})_d$ is $D$-dimensional as an ${\K}$-vector space proving the claim.
\end{proof}

Note that all statements of Proposition \ref{extkrempel} apply to the ideal $y(I)$ generated by the image of $I$ under $y$:

\begin{rem}\label{primesinext}
Proposition \ref{extkrempel} implies that for a prime ideal $P\subset S_{\K}$ the ideal $y(P)\subset S_{\K}(V)$ is also prime. Moreover, for an arbitrary ideal $I\subset S_{\K}$ with minimal primes $P_1,\ldots,P_s$ the extension $y(I)$ of $I$ under the ${\K}$-algebra homomorphism $y$ from Definition \ref{gendef} has the minimal primes $y(P_1),\ldots,y(P_s)$ and the same Krull dimension as $I$.
\end{rem}

The concept of genericity as defined above will now be applied to introduce the generic objects used in the following. As we have extended the meaning of ``generic'' to subvarieties of $\GL_n({\kx})$, the questions of \cite{TK} can be adapted to ask for the existence of a generic Gr\"obner complex and a generic tropical variety of $I$ on a subvariety $V\subset \GL_n({\kx})$. 

\begin{defn}\label{tropdef}
Let $V\subset \GL_n({\kx})$ be a subvariety and $I\subset S_{\K}$ be a graded ideal.
\begin{enumerate}
\item If for an open subset $\emptyset \neq U\subset V$ the Gr\"obner complex $\GC(g(I))$ is the same polyhedral complex for all $g\in U$, then this complex is called the \emph{generic Gr\"obner complex of $I$ on $V$}. It will be denoted by $\gGC_V(I)$.
\item If $T(g(I))$ is the same complex for all $g$ in an open subset $\emptyset\neq U\subset V$, then this complex is called the \emph{generic tropical variety of $I$ on $V$} and is denoted by $\gT_V(I)$.
\end{enumerate}
If $V$ is clear from the context we will also denote $\gGC_V(I)$ by $\gGC(I)$ and $\gT_V(I)$ by $\gT(I)$, respectively.
\end{defn}

A priori it is of course not clear, that generic Gr\"obner complexes or generic tropical varieties exist. The proof of this will be the object of Section \ref{genericgroebnercomplexes} and Section \ref{generictropicalvarieties}, respectively.

Note, however, that in the constant coefficient case the existence of a generic Gr\"obner fan and generic universal Gr\"obner basis on a subvariety $V$ of $\GL_n(K)$ can be proved with the same method as in the proof of \cite[Theorem 3.1]{TK}, where the field $$K'=K(y_{ij}: i,j=1,\ldots,n)$$ is replaced by $K(V)$. This yields the following theorem also needed in a later proof.

\begin{theorem}\label{gengroebfanonsubvar}
Let $I\subset S_K$ be a graded ideal and $V\subset \GL_n(K)$ a subvariety. Then there exists an open set $\emptyset\neq U\subset V$ and polynomials $h_1(y),\ldots,h_s(y)\in y(I)\subset S_{K(V)}$ such that
\begin{enumerate}
\item $\left\{h_1(y),\ldots,h_s(y)\right\}$ is a universal Gr\"obner basis of $y(I)$.
\item For $g\in U$ the set $\left\{h_1(g),\ldots,h_s(g)\right\}$ is a universal Gr\"obner basis of $g(I)$.
\item All of these Gr\"obner bases have the same support.
\end{enumerate}
\end{theorem}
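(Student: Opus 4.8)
The plan is to follow the strategy of the proof of \cite[Theorem 3.1]{TK}, adapting it to the subvariety $V$ by systematically replacing the field $K'=K(y_{ij})$ with $K(V)=Q(K[Y]/P)$. First I would pass to the generic coordinate change and consider the ideal $y(I)\subset S_{K(V)}$ generated by the image of $I$ under the $K$-algebra homomorphism $y$ of Definition~\ref{gendef}. By Remark~\ref{primesinext} (which applies since $K$ is algebraically closed), $y(I)$ is a graded ideal of the same Krull dimension as $I$, so in particular it makes sense to speak of its Gr\"obner fan and a universal Gr\"obner basis over $K(V)$. Since $y(I)$ is a graded ideal in a polynomial ring over a field, the classical theory applies and there is a finite universal Gr\"obner basis $\{h_1(y),\ldots,h_s(y)\}$ of $y(I)$; by clearing denominators (multiplying each $h_i(y)$ by a suitable nonzero element of $K[Y]/P$) we may assume each $h_i(y)$ actually has coefficients in $(K[Y]/P)[x_1,\ldots,x_n]$, so that substitution of $g\in V$ is meaningful for $g$ outside a proper closed subset.

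Next I would produce the open set $U$. The Gr\"obner fan of $y(I)$ is determined by finitely many initial ideals $\inom_\omega(y(I))$ for $\omega$ ranging over representatives of the finitely many Gr\"obner cones; in the constant coefficient case each such initial ideal is generated by the $\inom_\omega(h_i(y))$, and whether a given monomial lies in $\inom_\omega(y(I))$ is decided by a linear algebra computation (a membership/rank condition) on the coefficients of the $h_i(y)$, which are rational functions in the $y_{ij}$ modulo $P$. For each of the finitely many such conditions, the locus in $V$ where the specialization $g\mapsto h_i(g)$ fails to ``see'' the same answer as the generic one — i.e.\ where some relevant rank drops, or some denominator vanishes, or the specialized set fails to generate $g(I)$ — is a proper Zariski-closed subset of $V$. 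Intersecting the complement of all these finitely many proper closed subsets with the complement of the vanishing loci of the denominators gives a nonempty open $U\subset V$ (nonempty because $V$ is irreducible and each removed set is proper). For $g\in U$ one then checks: the $h_i(g)$ lie in $g(I)$ and generate it (they generate $g(I)_d$ in each degree $d$ because the specialized coefficient matrices have the generic rank, and $g(I)$ and $y(I)$ have the same Hilbert function by Proposition~\ref{extkrempel}/Remark~\ref{primesinext}); the initial ideal $\inom_\omega(g(I))$ is generated by $\inom_\omega(h_i(g))=\inom_\omega(h_i(y))|_{y=g}$ with the same support; hence $\{h_1(g),\ldots,h_s(g)\}$ is a universal Gr\"obner basis of $g(I)$ and all these bases share the support of $\{h_1(y),\ldots,h_s(y)\}$, giving (i), (ii) and (iii).

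The main obstacle — and the reason this is not entirely automatic — is the step where one must guarantee that passing from the generic $h_i(y)$ to the specialized $h_i(g)$ does not change which monomials lie in the relevant initial ideals, i.e.\ that the generic behaviour is the ``upper semicontinuous extreme'' realized on a dense open set. Concretely one needs that for $g$ outside a proper closed subset, $\inom_\omega(g(I))=\inom_\omega(h_i(g):i)$ and the leading-term structure is unchanged; this requires knowing that $\{\inom_\omega(h_i(y))\}$ is a Gr\"obner basis of $\inom_\omega(y(I))$ (so that the Buchberger-type $S$-polynomial reductions specialize) and that no ``extra'' cancellation occurs under specialization. This is exactly the content that is handled in \cite[Theorem 3.1]{TK} in the case $V=\GL_n(K)$; since the argument there uses only that $K(y_{ij})$ is the function field of an irreducible variety (namely $\GL_n(K)$), and $K(V)$ is the function field of the irreducible variety $V$, the same reasoning goes through verbatim with $K(V)$ in place of $K'$. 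I would therefore present the proof as a reduction: quote the structure of the proof of \cite[Theorem 3.1]{TK}, note that it nowhere uses properties of $\GL_n(K)$ beyond irreducibility and being the ambient space of the $y_{ij}$, and observe that replacing $K'$ by $K(V)$ throughout yields the open set $U\subset V$ and the polynomials $h_i(y)\in y(I)\subset S_{K(V)}$ with properties (i)--(iii).
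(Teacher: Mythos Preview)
Your proposal is correct and matches the paper's own approach exactly: the paper does not give a separate proof but simply states that the result follows by the method of \cite[Theorem 3.1]{TK} with the field $K'=K(y_{ij})$ replaced by $K(V)$, which is precisely the reduction you carry out. Your sketch is in fact more detailed than what the paper provides, and your identification of the key point---that the argument in \cite{TK} uses only that one is working over the function field of an irreducible variety---is exactly the observation the paper relies on.
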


As another first result in this direction we note that generically the tropical variety of an ideal is empty if and only if $\dim (S_{\K}/I)=0$, the proof of which works exactly as the one of the analogous statement in \cite[Lemma 2.5]{TK} for the constant coefficient case.

\begin{prop}\label{dimo}
Let $I\subset S_{\K}={\K}[x_1,\ldots,x_n]$ be a graded ideal with $\dim (S_{\K}/I)>0$. Then there exists an open subset $\emptyset\neq U\subset \GL_n({\kx})$, such that $T(g(I))\neq \emptyset$ for every $g\in U$.
\end{prop}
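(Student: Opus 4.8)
The plan is to reduce the statement to a known fact about tropical varieties over a field with nontrivial valuation and then transport it along the coordinate change $y$. First I would recall that, by Proposition~\ref{hilbinit} and the fundamental theorem of tropical geometry, $T(g(I))$ is nonempty whenever $\dim(S_{\K}/g(I))>0$; indeed for a graded ideal with positive-dimensional quotient the zero-set $X(g(I))\subset \K^n$ is positive-dimensional (it contains the origin and is a cone, hence contains a curve through the origin), and over the field $L$ of generalized power series, which carries a nontrivial valuation with value group $\R$, a positive-dimensional graded variety has a point with at least one coordinate of nonzero valuation, forcing the associated tropical variety to be nonempty. The only subtlety is that a priori the valuation of the coordinates of a point of $X(g(I))$ might be constrained; but since $g\in\GL_n(\kx)$ has entries of valuation $0$, we have $\dim(S_{\K}/g(I))=\dim(S_{\K}/I)>0$ by the remark following Definition~\ref{gendef}, and the statement that a positive-dimensional graded ideal over $L$ has nonempty tropical variety is exactly (a consequence of) Theorem~\ref{tropbasics}(iii) together with the existence of a minimal prime $P$ of maximal dimension — here one must check $P$ contains no monomial, which holds because $I$ is graded and $S_{\K}/P$ is positive-dimensional so $P$ cannot contain any variable, hence no monomial either. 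Thus $T(g(I))\neq\emptyset$ for \emph{every} $g\in\GL_n(\kx)$, and in particular on any nonempty open $U$; one may simply take $U=\GL_n(\kx)$.

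Alternatively, and more in the spirit of \cite[Lemma~2.5]{TK}, I would argue via the generic coordinate change directly: pass to the field $\K(V)$ of Notation~\ref{kv} and the ideal $y(I)\subset S_{\K(V)}$. By Remark~\ref{primesinext}, $y(I)$ has the same Krull dimension as $I$, so $\dim(S_{\K(V)}/y(I))>0$, and the argument of the previous paragraph (applied over the valued field $\K(V)$, which still has value group $\R$ by Corollary~\ref{valfieldext}) shows $T(y(I))\neq\emptyset$. Concretely, pick a homogeneous $f\in I$ and $\omega\in\R^n$ witnessing a point of $T(y(I))$ — i.e. $\inom_{\omega}(f(y))$ is not a monomial — and more strongly use Theorem~\ref{gengroebfanonsubvar} to get a universal Gröbner basis $h_1(y),\ldots,h_s(y)$ of $y(I)$; then the property ``$\inom_{\omega}(h_i(y))$ is not a monomial for all $i$'' for suitable $\omega$ translates, upon specializing $y\mapsto g$ for $g$ outside the (proper, closed) locus where any denominator vanishes or where the relevant initial-form supports degenerate, into the same property for $g(I)$, giving a nonempty open $U\subset V$ — and even $U\subset\GL_n(\kx)$ — with $T(g(I))\neq\emptyset$.

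The main obstacle is the first, essentially valuation-theoretic, step: verifying that a positive-dimensional graded ideal over a field with value group $\R$ has nonempty tropical variety, i.e. that $X(I)$ genuinely contains points whose coordinates are not all of valuation $0$. This is where algebraic closedness of $K$ (hence of $L$, by \cite[(2.1),(5.2)]{RI}) and the structure of $L$ as a generalized power series field enter: one needs to produce an $L$-point of a positive-dimensional graded variety lying on the ``tropically nontrivial'' part, which follows from Theorem~\ref{tropbasics} as noted, but deserves to be spelled out. Everything after that — Krull-dimension invariance under $g$ and under the $y$-extension, and the specialization of a finite set of non-monomiality conditions from $y(I)$ to $g(I)$ on a nonempty open set — is routine and parallels \cite[Lemma~2.5]{TK} verbatim.
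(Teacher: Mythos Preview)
Your first approach contains a genuine error that invalidates the conclusion. You claim that if $P$ is a graded prime with $\dim(S_{\K}/P)>0$ then $P$ cannot contain any variable, hence no monomial. This is false: take $P=(x_1)\subset \K[x_1,x_2]$, which is prime with $\dim(S_{\K}/P)=1>0$ and yet contains the monomial $x_1$. Consequently $T(P)=\emptyset$, so your stronger claim that $T(g(I))\neq\emptyset$ for \emph{every} $g\in\GL_n(\kx)$ (allowing $U=\GL_n(\kx)$) is wrong. The whole point of the proposition is that monomial containment can occur for special $g$ but is destroyed generically; your argument erases precisely the phenomenon the statement is about.

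Your second approach is closer in spirit to the paper (which simply refers to \cite[Lemma~2.5]{TK}), but you justify $T(y(I))\neq\emptyset$ by ``the argument of the previous paragraph'', which is the broken one. What is actually needed is the observation that for a minimal prime $P$ of $I$ with $\dim(S_{\K}/P)>0$ the degree-one part $P_1$ is a \emph{proper} subspace of $(S_{\K})_1$; hence the condition that all $g^{-1}(x_i)$ lie outside $P_1$ (equivalently, that no $x_i\in g(P)$, equivalently that $g(P)$ contains no monomial since $g(P)$ is prime) defines a nonempty Zariski-open subset of $\GL_n(\kx)$. On that open set Theorem~\ref{tropbasics}(iii) applies and gives $T(g(I))\neq\emptyset$. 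Once you replace the faulty step by this genericity argument, the rest of your second paragraph (specializing from $y(I)$ to $g(I)$) is unnecessary: one works directly with $g(P)$.
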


Note that every graded ideal $I\subset S_L$ with $\dim (S_{\K}/I)=0$ contains a monomial. Thus Proposition \ref{dimo} immediately implies that if it exists, $\gT(I)=\emptyset$ if and only if $\dim (S_{\K}/I)=0$.

Recall that $\dim T(I)$ can be strictly smaller than $\dim (S_L/I)$ if $I$ is not prime and the minimal primes of $I$ defining the dimension contain monomials (this follows from Theorem \ref{tropbasics}). The above proposition shows that in general, however, equality holds between the dimensions even in the case of non-prime ideals.

\begin{cor}\label{genericsamedim}
Let $I\subset S_{\K}={\K}[x_1,\ldots,x_n]$ be a graded ideal. There exists an open subset $\emptyset\neq U\subset \GL_n({\kx})$, such that $\dim T(g(I))=\dim (S_{\K}/I)$ for every $g\in U$.
\end{cor}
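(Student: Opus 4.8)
The plan is to combine the two preceding results, Proposition \ref{dimo} and Theorem \ref{tropbasics}, together with the fact (Remark \ref{primesinext}) that passing from $I$ to its extension $y(I)$ — and specializing to a generic $g\in\GL_n(\kx)$ — preserves minimal primes and Krull dimension. First I would reduce to the case $\dim(S_L/I)>0$: if $\dim(S_L/I)=0$, then $I$ contains a monomial, $g(I)$ contains a monomial for every $g$ (up to the generic open set where the coordinate change is defined, in fact for all $g$ since a $0$-dimensional ideal has this property invariantly), so $T(g(I))=\emptyset$ and the equation $\dim T(g(I))=0=\dim(S_L/I)$ holds trivially (with the usual convention $\dim\emptyset=-\infty$ replaced here by the statement that both sides describe the empty set; one should state this convention). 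So assume $m:=\dim(S_L/I)>0$.

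Next I would produce the open set. By Proposition \ref{dimo} there is an open $\emptyset\neq U_1\subset\GL_n(\kx)$ with $T(g(I))\neq\emptyset$ for all $g\in U_1$; equivalently, for each such $g$ some minimal prime $P$ of $g(I)$ contains no monomial. Now I want to upgrade this to: some minimal prime of $g(I)$ of \emph{full} dimension $m$ contains no monomial, for $g$ in a possibly smaller open set. The key point is that the minimal primes of $g(I)$ are exactly $g(P_1),\dots,g(P_s)$ where $P_1,\dots,P_s$ are the minimal primes of $I$ (Remark \ref{primesinext}, specialized — here one uses that generically $g(I)$ is genuinely the ideal obtained by substituting $g$, and that extension and specialization commute with taking minimal primes on a dense open set; this is the content one borrows from \cite{TK}, Lemma 2.5). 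Moreover $\dim(S_L/g(P_i))=\dim(S_L/P_i)$. Fix an $i_0$ with $\dim(S_L/P_{i_0})=m$. The assertion ``$P_{i_0}$ contains no monomial'' is not automatic — $P_{i_0}$ itself might contain a monomial — but applying Proposition \ref{dimo} to the single prime ideal $P_{i_0}$ (which has $\dim(S_L/P_{i_0})=m>0$), we get an open $\emptyset\neq U_2^{(i_0)}\subset\GL_n(\kx)$ on which $T(g(P_{i_0}))\neq\emptyset$, i.e. $g(P_{i_0})$ contains no monomial. Since $g(P_{i_0})$ is a minimal prime of $g(I)$ of dimension $m$, part (iii) of Theorem \ref{tropbasics} gives $\dim T(g(I))=m$ for every $g\in U_2^{(i_0)}$ (intersected with the open locus where $g$ acts as honest substitution).

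Taking $U$ to be the intersection of $U_2^{(i_0)}$ with the open set on which the coordinate change $g$ is defined (no denominator among the coefficients of $y$ vanishes), which is still a non-empty open subset of $\GL_n(\kx)$ since $\GL_n(\kx)$ is irreducible, we are done: for all $g\in U$,
\[
\dim T(g(I))=m=\dim(S_L/I),
\]
the last equality because $g(I)\cong I$ as graded $L$-modules and hence has the same Hilbert function and Krull dimension (as noted before Remark \ref{primesinext}).

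The main obstacle is the bookkeeping needed to justify that the minimal primes of $g(I)$ are the specializations $g(P_i)$ of the minimal primes of $I$ for generic $g$, and in particular that a fixed full-dimensional minimal prime $P_{i_0}$ of $I$ specializes to a full-dimensional minimal prime of $g(I)$ whose tropical variety is non-empty. Everything else is a direct quotation of Proposition \ref{dimo} and Theorem \ref{tropbasics}(iii), plus the trivial $0$-dimensional case; the only genuine content is that one may apply the genericity statement of Proposition \ref{dimo} to the individual prime $P_{i_0}$ rather than to $I$, and then transfer the conclusion back to $g(I)$ via the correspondence of minimal primes. This is exactly the step where one invokes that the argument of \cite[Lemma 2.5]{TK} carries over verbatim to the subvariety and non-constant-coefficient setting.
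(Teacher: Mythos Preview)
Your proof is correct and follows essentially the same route as the paper: pick a top-dimensional minimal prime $P$ of $I$, apply Proposition~\ref{dimo} to $P$ to obtain an open set on which $g(P)$ contains no monomial, and then invoke Theorem~\ref{tropbasics}(iii). Your bookkeeping concerns are unnecessary, however: for every $g\in\GL_n(\kx)$ the map $g$ is an $L$-algebra \emph{automorphism} of $S_L$, so the minimal primes of $g(I)$ are $g(P_1),\ldots,g(P_s)$ with $\dim(S_L/g(P_i))=\dim(S_L/P_i)$ for \emph{all} $g$ (no genericity or appeal to Remark~\ref{primesinext} needed here), and there is no ``open locus where $g$ acts as honest substitution'' to intersect with --- the initial application of Proposition~\ref{dimo} to $I$ itself is likewise superfluous.
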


\begin{proof}
The case $\dim (S_{\K}/I)=0$ is clear. Let $\dim (S_{\K}/I)=m>0$. Then there exists a minimal prime $P$ of $I$ with $\dim (S_{\K}/P)=m$. By Proposition \ref{dimo} there exists an open subset $\emptyset\neq U\subset \GL_n({\kx})$, such that $T(g(P))\neq \emptyset$ for all $g\in U$. Since $g(P)$ does not contain a monomial for $g\in U$, Theorem \ref{tropbasics} implies that $\dim T(g(I))=m$ for $g\in U$.
\end{proof}

If the generic tropical variety exists for an ideal $I$ on a subvariety $V$ of $\GL_n({\kx})$, we can also hope to find a tropical basis of each ideal $g(I)$, such that each member cuts out the same tropical hypersurface generically. This concept is encoded in the following definition.

\begin{defn}\label{gentropbasis}
Let $I\subset {\K}[x_1,\ldots,x_n]$ a graded ideal and $V\subset \GL_n({\kx})$ a subvariety. Let $y(I)\subset {\K}(V)[x_1,\ldots,x_n]$ be as in Definition \ref{gendef}. A finite set of polynomials $$F_1(y),\ldots,F_s(y)\in y(I)\subset {\K}(V)[x_1,\ldots,x_n]$$ is called a \emph{generic tropical basis of $I$ on $V$}, if there exists an open set $\emptyset\neq U\subset V$, such that:
\begin{enumerate}
\item $F_1(g),\ldots,F_s(g)$ is a tropical basis of $g(I)$ for $g\in U$.
\item For every $j$ we have: The tropical variety $T(F_j(g))$ is the same polyhedral complex for every $g\in U$.
\end{enumerate}
If $\emptyset\neq U\subset V$ fulfills these two conditions, the generic tropical basis is called \emph{valid} on $U$.
\end{defn}

The existence of generic tropical bases will be shown in Section \ref{generictropicalvarieties}.

\section{Generic Gr\"obner Complexes}\label{genericgroebnercomplexes}

Let ${\kx}$ and ${\K}$ be as defined in Notation \ref{kandK}. In \cite[Corollary 3.2]{TK} the existence of a generic Gr\"obner fan of a graded ideal $I\subset {\kx}[x_1,\ldots,x_n]$ was proved. In the setting of this paper this is the same as showing that a generic Gr\"obner complex of a graded ideal exists in the constant coefficient case, that is if the ideal $I\subset {\K}[x_1,\ldots,x_n]$ is generated in ${\kx}[x_1,\ldots,x_n]$, see \cite[Chapter 2]{MAST} in a section on Gr\"obner bases.

In the non-constant coefficient case a similar result can be proved. The proof given here relies on the fact that ${\K}$ is a field of power series over ${\kx}$ and a priori does not apply in a more general setting. The idea of the proof is taken from \cite{MAST}, where the concept of Gr\"obner complexes is introduced and their existence is shown. Since only graded ideals are considered, one can prove certain claims by considering the homogeneous components of the ideals separately. These are finitely generated vector spaces, which can be compared by studying the corresponding Grassmannians embedded into projective space via the Pl\"ucker embedding. 

Let $U$ be a $D$-dimensional vector subspace of an $N$-dimensional $L$-vector space. By choosing a basis we represent $U$ as the row space of a $D\times N$-matrix $A$ with entries in ${\K}$. We set $m={N\choose D}$ and consider the vector $P\in {\K}^{m}$ of all $D\times D$-minors of $A$. The components of $P$ will be indexed by subsets of the $N$ columns of $A$ of cardinality $D$. Following the notation in \cite{MAST} in this section we denote the set of all such subsets by $[N]^D$.

The equivalence class (up to scalar multiple) of $P$ in projective space $\Px^{m-1}$ is called the Pl\"ucker coordinates of $U$ in the Grassmannian $\Gr_{\K}(D,N)$. Note that the components $P_J$ of $P$ for $J\in [N]^D$ are elements of ${\K}$, so we can consider the componentwise valuation $v(P)$. This is not a well-defined concept on the Pl\"ucker coordinates, since these are defined up to ${\K}$-scalar multiple. In the following we will always mean that we apply the valuation map to a fixed representative, when we consider the valuation of Pl\"ucker coordinates. The final results will always depend on differences $v(P_J)-v(P_J')$ for $J,J'\in [N]^D$ and this is well-defined on the equivalence classes induced by scalar multiplication.

The aim of this section will be to prove the following theorem, which is an analogue to \cite[Theorem 3.1]{TK} in the non-constant coefficient case.

\begin{theorem}\label{gengrobcomp}
Let $I\subset S_{\K}$ be a graded ideal and $V\subset \GL_n({\kx})$ be a subvariety. Then there exists an open set $\emptyset\neq U\subset V$, such that the Gr\"obner complex $\GC(g(I))$ is the same complex for all $g\in U$.
\end{theorem}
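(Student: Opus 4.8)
The plan is to reduce the statement to a finiteness statement about initial ideals and then to an application of generic freeness / constructibility over the parameter variety $V$, working degree by degree on the homogeneous components of $I$ just as in \cite{MAST}. First I would fix a degree $d$ and consider the finite-dimensional ${\K}(V)$-vector space $(y(I))_d\subset S_{{\K}(V)}$, which by Proposition \ref{extkrempel} (applied to the extension ${\K}\subset {\K}(V)$) has the same dimension $D=\dim_{\K} I_d$ as $I_d$. Representing it by a $D\times N$ matrix of coefficients (where $N=\binom{n+d-1}{d}$ is the number of monomials in degree $d$) whose entries are now elements of ${\K}(V)$, I would form its Plücker vector $P(y)\in\Px^{m-1}$ with $m=\binom{N}{D}$; each coordinate $P_J(y)$ is an element of ${\K}(V)=Q({\K}[Y]/P{\K}[Y])$, hence has a well-defined valuation $v(P_J(y))\in\R$. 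The key point to extract from \cite{MAST} is that the Gröbner complex of $g(I)$ in degree $d$ — i.e.\ the decomposition of $\R^n$ into the polyhedra on which $\inom_\omega(g(I))_d$ is constant — is determined entirely by the ``tropical Plücker data'', namely by the collection of real numbers $v(P_J(g))$ together with the information of which vanishing/non-vanishing patterns among the $\overline{t^{-W}(\text{minors})}$ occur; and the whole Gröbner complex is the common refinement over finitely many degrees $d$ (up to the Castelnuovo--Mumford regularity of the Gröbner complex, which is uniform in this family by the Hilbert function being constant, cf. Proposition \ref{hilbinit} and Proposition \ref{extkrempel}).

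Next I would show that these data are generically constant in $g\in V$. For the valuations: each $P_J(y)\in Q({\K}[Y]/P{\K}[Y])$, and via the embedding of Corollary \ref{valfieldext} into $Q({\kx}[Y]/P)\left\{\!\!\left\{t\right\}\!\!\right\}$ we may write $P_J(y)$ (after clearing a common denominator, which only shifts all $v(P_J(y))$ by the same amount and hence does not affect the Gröbner complex, as noted in the discussion preceding the theorem) as a generalized power series $\sum_\mu c_{J,\mu}(y)\,t^\mu$ with $c_{J,\mu}(y)\in {\kx}(V)=Q({\kx}[Y]/P)$. For a fixed pair $J,J'$ the difference $v(P_J(g))-v(P_{J'}(g))$ is locally constant on the open set where the relevant leading coefficients $c_{J,\mu}$ and $c_{J',\mu'}$ neither vanish nor have their denominators vanish; removing the (finitely many, since each $c_{J,\mu}$ is a single rational function) such bad loci from $V$ gives a non-empty open $U_d'\subset V$ on which all the valuations $v(P_J(g))$ agree with $v(P_J(y))$ up to the common shift. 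For the residual (characteristic-$0$ vs.\ combinatorial) data: the initial forms $\inom_\omega(g(I))_d$ are obtained from the images $\overline{a_\nu(g)}\in{\kx}$ of coefficients, and the condition that a given subset of these images be zero resp.\ nonzero is again a Zariski-constructible condition on $g$; intersecting over the finitely many relevant $\omega$ (one interior point per maximal cell of the generic complex) and finitely many degrees $d\le$ regularity yields a non-empty open $U\subset V$. On $U$ all the combinatorial/valuative data defining $\GC(g(I))$ coincide, hence $\GC(g(I))$ is literally the same polyhedral complex for every $g\in U$.

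The main obstacle I anticipate is making precise the claim that the Gröbner complex in each degree depends \emph{only} on finitely many pieces of data that are constructible in $g$ — in the non-constant-coefficient setting the relevant invariant is not just a set of valuations but the ``valuated matroid'' / tropical Plücker vector of $U\otimes$ the residue structure, and one must quote the exact machinery of \cite[Chapter 2]{MAST} that identifies $\inom_C(I)$ with this data and that bounds the number of degrees one needs to inspect (a regularity-type bound, uniform across the family because the Hilbert function is fixed). The secondary technical nuisance is the incompatibility of $Y$-extension with generalized power series, which is exactly why one works over ${\K}(V)=Q({\K}[Y]/P{\K}[Y])$ with the valuation of Corollary \ref{valfieldext}; once that framework is in place, the specialization $y\mapsto g$ is well-behaved on a suitable open $U$ by Remark \ref{primesinext}, and the rest is the constructibility argument sketched above together with the observation that a polyhedral complex in $\R^n$ is determined by finitely many linear (in)equalities whose coefficients are among the constant data, so ``same data'' forces ``same complex.''
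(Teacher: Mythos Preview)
Your approach is essentially the paper's: Pl\"ucker coordinates of $y(I)_d$ in each degree, the embedding of Corollary~\ref{valfieldext} to make the valuations $v(P_J(g))$ generically constant on $V$, and then a uniform bound on the degrees that need to be inspected. Two points of comparison are worth making. First, the ``residual data'' layer you add (vanishing/non-vanishing patterns of the $\overline{a_\nu(g)}$) is not needed for the statement as phrased: the key computation from \cite{MAST}, reproduced in Lemma~\ref{groebpoly}, shows that the Gr\"obner polyhedron $C^d_{g(I)}[\omega]$ is cut out by (in)equalities of the form $v(P_J(g))+\omega\cdot M_J \gtreqless v(P_{J'}(g))+\omega\cdot M_{J'}$, so the polyhedral decomposition---though not the actual initial ideals $\inom_\omega(g(I))$---depends only on the valuation vector $(v(P_J(g)))_J$, and it suffices to freeze the \emph{leading} coefficient of each $P_J(y)$ viewed in ${\kx}(V)\{\!\{t\}\!\}$. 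Second, your degree bound (``regularity, uniform because the Hilbert function is fixed'') is the one place where the paper is sharper than your sketch: a uniform regularity bound from the Hilbert function alone is not a standard fact, and the paper instead invokes Maclagan's finiteness theorem \cite{MA} (only finitely many monomial ideals share a given Hilbert function), takes $\mathcal D$ to be the set of generating degrees of all of them, and observes that every $\inom_\omega(g(I))$ has a Gr\"obner basis supported in those degrees; this is the precise substitute for your regularity argument.
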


This theorem shows that the first part of Definition \ref{tropdef} is not vacuous, since generic Gr\"obner complexes indeed exist. To prove this theorem we consider the graded components of the initial ideals of $g(I)$ for $g\in V$. These graded components each induce a polyhedral complex in $\R^n$, see \cite{MAST}. Let $d\geq 0$. For $g\in V$ and $\omega\in \R^n$ set $$C_{g(I)}^d[\omega]=\left\{\omega'\in\R^n:\inom_{\omega'}(g(I))_d=\inom_{\omega}(g(I))_d\right\}.$$ We call the topological closure of this the \emph{Gr\"obner polyhedron of $\omega$ in degree $d$}. The name is justified by the following statement.

\begin{lemma}\label{groebpoly}
Let $I$ and $V$ be as in Theorem \ref{gengrobcomp} and $d\geq 0$. Then there exists an open set $\emptyset\neq U(d)\subset V$, such that for every $\omega\in \R^n$ the set $$C_{g(I)}^d[\omega]=\left\{\omega'\in\R^n:\inom_{\omega'}(g(I))_d=\inom_{\omega}(g(I))_d\right\}$$ is the same relatively open polyhedron for all $g\in U(d)$.
\end{lemma}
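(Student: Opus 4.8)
The plan is to reduce everything to a finite-dimensional statement about Plücker coordinates and then invoke the constant-coefficient genericity result, using the valued-field machinery of Section \ref{valuedfields} to pass between $L$ and the function field $L(V)$. Fix $d\ge 0$ and let $N=\dim_K S_K[d]=\binom{n+d-1}{d}$ and $D=\dim_L(I)_d=\dim_{L(V)}\bigl(y(I)\bigr)_d$; by Proposition \ref{extkrempel} this dimension is independent of the coordinate change, so $\dim_L\bigl(g(I)\bigr)_d=D$ for all $g\in\GL_n(k)$. First I would pick a basis of $\bigl(y(I)\bigr)_d$ and record it as the rows of a $D\times N$ matrix $A(y)$ with entries in $L(V)$, obtaining a Plücker vector $P(y)=\bigl(P_J(y)\bigr)_{J\in[N]^D}$ whose entries lie in $L(V)=Q(L[Y]/PL[Y])$. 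Using the inclusion $L(V)\hookrightarrow Q(k[Y]/P)\{\!\{t\}\!\}$ of Corollary \ref{valfieldext}, each nonzero $P_J(y)$ has a well-defined valuation $v(P_J(y))\in\R$ computed in the generalized-power-series field over $k(V)$.

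The key point is a specialization lemma: for $g$ in a suitable nonempty open $U(d)\subset V$, the rows of $A(g)$ are defined (no denominator vanishes), they form a basis of $\bigl(g(I)\bigr)_d$, and $v(P_J(g))=v(P_J(y))$ for every $J$ with $P_J(y)\neq 0$, while $P_J(g)=0$ precisely when $P_J(y)=0$. To get this I would write each $P_J(y)$ as $\sum_{\mu}c_{J\mu}(y)\,t^{\mu}$ with $c_{J\mu}\in k(V)$ (after clearing a common denominator, a single element of $k(V)$, so that all $P_J(y)$ become elements of $k(V)\{\!\{t\}\!\}$ simultaneously); the valuation $v(P_J(y))$ is the least $\mu$ with $c_{J\mu}(y)\neq 0$. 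Since $k(V)=Q(k[Y]/P)$ is a function field, the finitely many relevant $c_{J\mu}(y)$ are nonzero rational functions on $V$, so their common nonvanishing locus, intersected with the locus where the chosen denominators are nonzero, is a nonempty open $U(d)\subset V$. For $g\in U(d)$ the matrix $A(g)$ then has entries in $L$, its $D\times D$ minors are exactly the $P_J(g)$, and since at least one $P_J(g)\ne0$ the rows are $L$-linearly independent, hence (having the right number and lying in $\bigl(g(I)\bigr)_d$, which is $D$-dimensional) form a basis; and $v(P_J(g))=v(P_J(y))$ by construction.

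With the specialization lemma in hand the lemma follows from the constant-coefficient description of Gröbner polyhedra in \cite{MAST}. Indeed, by the theory of Gröbner complexes the relatively open polyhedron $C_{g(I)}^d[\omega]$ depends only on the tropicalized Plücker vector $\mathrm{trop}(g(I)_d)=\bigl(v(P_J(g))\bigr)_{J\in[N]^D}$ (more precisely on the induced face structure on $\R^n$ cut out by comparing, for each $\omega$, the minima $\min_{J}\{v(P_J(g))+\omega\cdot w_J\}$ over the weight vectors $w_J$ attached to the columns), together with the combinatorial data of which $P_J$ vanish. On $U(d)$ all of this data is constant and equals the corresponding data for the generic fiber $y(I)_d$ over $L(V)$, so $C_{g(I)}^d[\omega]$ is literally the same relatively open polyhedron for all $g\in U(d)$, for every $\omega\in\R^n$. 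The main obstacle is the specialization lemma itself: one must be careful that clearing denominators and extracting valuations commute with substituting $g$ for $y$, i.e. that $\overline{(\,\cdot\,)}$ and $v$ behave well under the inclusion $L(V)\hookrightarrow Q(k(V)\{\!\{t\}\!\})$ of Corollary \ref{valfieldext} and under specialization on $V$; this is exactly where algebraic closedness of $k$ (via Proposition \ref{extkrempel}, ensuring the dimension $D$ is stable) and the fact that $L$ is a power-series field over $k$ are used.
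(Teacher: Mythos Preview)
Your proposal is correct and follows essentially the same approach as the paper: pass to the Pl\"ucker coordinates $P_J(y)$ of $y(I)_d$ over $L(V)$, view them inside $k(V)\{\!\{t\}\!\}$ via Corollary \ref{valfieldext}, cut out a nonempty open $U(d)\subset V$ on which the leading coefficients (and denominators) do not vanish so that $v(P_J(g))$ is constant and equal to $v(P_J(y))$, and then invoke the description from \cite{MAST} of $C^d_{g(I)}[\omega]$ by equalities and inequalities among the $v(P_J(g))+\omega\cdot M_J$. The only cosmetic difference is that the paper clears denominators at the outset so that the $P_J(y)$ are already polynomials in the $y_{ij}$, whereas you carry rational functions and add denominator-nonvanishing conditions to $U(d)$; your phrase ``constant-coefficient description'' is a slip, since the \cite{MAST} formula you use is precisely the non-constant-coefficient one.
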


\begin{proof}
We follow the proof of the corresponding statement in \cite{MAST} using almost the same notation and making the necessary observations for our result.

Consider the ${\K}$-algebra homomorphism $y:{\K}[x_1,\ldots,x_n] \longrightarrow {\K}(V)[x_1,\ldots,x_n]$ from Definition \ref{gendef}. Recall that $y(I)\subset {\K}(V)[x_1,\ldots,x_n]$ denotes the graded ideal generated by the image of $I$ under $y$.

Let $\dim_{\K} I_d=D$. We then have $\dim_{{\K}(V)} y(I)_d=D$ and $\dim_{\K} g(I)_d=D$ for all $g\in \GL_n({\kx})$ by Remark \ref{primesinext}. Moreover, by Proposition \ref{hilbinit} we also know that $\dim_{\kx} \inom_{\omega}(g(I))_d=D$ for $g\in\GL_n({\kx})$. Set $N={{n+d-1}\choose{d}}$ and enumerate all monomials of degree $d$ by $x^{\mu_1},\ldots,x^{\mu_N}$. The ${\K}(V)$-vector space $y(I)_d$ corresponds to a point in the Grassmannian $\Gr_{{\K}(V)}(D,N)$ and for all $g\in \GL_n({\kx})$ the ${\K}$-vector spaces $g(I)_d$ correspond to points in the Grassmannian $\Gr_{\K}(D,N)$. Moreover, the ${\kx}$-vector spaces $\inom_{\omega}(g(I))_d$ correspond to points in the Grassmannian $\Gr_{\kx}(D,N)$. 

Let $h_1(y),\ldots,h_D(y)$ be an ${\K}(V)$-basis of $y(I)_d$. Note that by multiplying with denominators we can choose the coefficients of the terms as polynomials in the residue classes of the $y_{ij}$ modulo the defining prime ideal of $V\subset \GL_n({\kx})$. This implies that the components $P_J(y)$ of the Pl\"ucker coordinates of $y(I)_d$ for $J\in [N]^D$ are also polynomials in the residue classes of the $y_{ij}$. We claim that there exists an open set $\emptyset\neq U\subset V$, such that $v(P_J(g))=v(P_J(g'))$ for all $g,g'\in U$ and all $J\in [N]^D$. 

To prove this consider $P_J(y)$ as an element of ${\kx}(V)\left\{\!\!\left\{t\right\}\!\!\right\}$ by the natural inclusion $${\K}(V)\hookrightarrow {\kx}(V)\left\{\!\!\left\{t\right\}\!\!\right\}$$ as in Corollary \ref{valfieldext}. Thus we write $P_J(y)$ as a formal power series in $t$ whose coefficients are polynomial expressions in the residue classes of the $y_{ij}$. Choose an open subset $\emptyset\neq U\subset V$, such that no leading coefficient in any of the $P_J(y)$ as an element of the valued field ${\kx}(V)\left\{\!\!\left\{t\right\}\!\!\right\}$ vanishes. This implies $v(P_J(g))=v(P_J(g'))$ for all $g,g'\in U$ and every $J\in[N]^D$.

In particular, $P_J(y)=0$ if and only if $P_J(g)=0$ for $g\in U$. So $h_1(g),\ldots,h_D(g)$ is a basis of the ${\K}$-vector space $g(I)_d$, since these vectors are linearly independent and the dimensions of $y(I)_d$ and $g(I)_d$ coincide. Hence, $P_J(g)$ are the Pl\"ucker coordinates of $g(I)_d$ for $g\in U$.
 
For $J\in [N]^D$ and $g\in U$ let $M_J=\sum_{j\in J}\mu_j$ and $W(\omega)=\min_{J}\left\{v(P_J(g))+\omega\cdot M_J\right\}$. Denote by $p^{\omega}_J(g)$ the Pl\"ucker coordinates of $\inom_{\omega}(g(I))_d$ depending on $\omega\in\R^n$.

As proved in \cite{MAST} the equation 
\begin{equation*}
p^{\omega}_J(g)=\overline{t^{\omega\cdot M_J-W(\omega)}P_J(g)}
\end{equation*}
holds for $g\in U$ up to global scaling, which does not change the point in the Grassmannian. Thus for $\omega'\in \R^n$ and $g\in U$ we have 
\begin{eqnarray*}
\omega'\in C_{g(I)}^d[\omega] & \Leftrightarrow & \inom_{\omega'}(g(I))_d=\inom_{\omega}(g(I))_d\\
                              & \Leftrightarrow & p_J^{\omega'}(g)=p_J^{\omega}(g) \hspace{0.3cm} \forall J\in [N]^D\\
                              & \Leftrightarrow & \overline{t^{\omega'\cdot M_J-W(\omega')}P_J(g)}=\overline{t^{\omega\cdot M_J-W(\omega)}P_J(g)} \hspace{0.3cm} \forall J\in [N]^D.
\end{eqnarray*}

If for some $J\in [N]^D$ we have $P_J(g)=0$, then also $p_J^{\omega}(g)=0$ for every $\omega$, so the above statement does not impose a condition on $C_{g(I)}^d[\omega]$. If $P_J(g)\neq0$, the last equation is fulfilled if for every set $J$ we have:
\begin{enumerate}
\item Either the minimum $W(\omega)$ is not attained at $J$, so $v(P_J(g))+\omega\cdot M_J>W(\omega)$. Then $p_J^{\omega'}(g)=p_J^{\omega}(g)=0$ and $v(P_J(g))+\omega'\cdot M_J>W(\omega')$ as well.
\item Or we have $v(P_J(g))+\omega\cdot M_J=W(\omega)$, then the Pl\"ucker coordinates coincide if and only if $v(P_J(g))+\omega'\cdot M_J=W(\omega')$.
\end{enumerate}
 These equalities and inequalities define $C_{g(I)}^d[\omega]$ to be the relative interior of a polyhedron in $\R^n$, which does not depend on $g$ for $g\in U$, as $v(P_J(g))$ is the same for every $g\in U$.
\end{proof}

After having obtained individual ``generic'' Gr\"obner polyhedra in a given degree in Lemma \ref{groebpoly}, these can now be shown to form a polyhedral complex in $\R^n$. This has been proved in \cite{MAST}. 

\begin{lemma}[\cite{MAST}]\label{degcomplex}
For all $g\in U(d)$ as in Lemma \ref{groebpoly} the collection of the closures of all polyhedra $C_{g(I)}^d[\omega]$ for $\omega\in\R^n$ form the same polyhedral complex $\Cc^d$ in $\R^n$.
\end{lemma}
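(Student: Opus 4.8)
The statement asserts that, for $g$ ranging over the open set $U(d)\subset V$ supplied by Lemma \ref{groebpoly}, the closures of the polyhedra $C_{g(I)}^d[\omega]$ assemble into a single polyhedral complex $\Cc^d$ independent of $g$. Since Lemma \ref{groebpoly} already gives us that, for each fixed $\omega$, the relatively open polyhedron $C_{g(I)}^d[\omega]$ is independent of $g$ on $U(d)$, the only thing left to establish is that the collection $\{\overline{C_{g(I)}^d[\omega]}:\omega\in\R^n\}$ is a polyhedral complex in the usual sense, i.e.\ that it covers $\R^n$, that each face of a member is again a member, and that the intersection of two members is a common face of both. Because membership in the collection is independent of $g$, the complex axioms — once verified for a single $g$ — hold simultaneously for all $g\in U(d)$, giving a common complex $\Cc^d$.

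\textbf{Key steps.} First I would recall the explicit description of $C_{g(I)}^d[\omega]$ from the proof of Lemma \ref{groebpoly}: fixing $g\in U(d)$ and writing $w_J:=v(P_J(g))$ for $J\in[N]^D$ with $P_J(g)\ne 0$, the set $C_{g(I)}^d[\omega]$ is cut out by the conditions that for each such $J$ either $w_J+\omega'\cdot M_J>W(\omega')$ (strict) or $w_J+\omega'\cdot M_J=W(\omega')$, according to whether the corresponding condition holds at $\omega$, where $W(\omega')=\min_J\{w_J+\omega'\cdot M_J\}$. In other words, $C_{g(I)}^d[\omega]$ is a cell of the polyhedral subdivision of $\R^n$ induced by the (tropical) minimum of the affine functions $\omega'\mapsto w_J+\omega'\cdot M_J$; equivalently it is the normal-fan-type decomposition attached to the regular subdivision of the point configuration $\{M_J\}$ with weights $\{w_J\}$. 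This is a classical construction: the closures of such cells form the \emph{polyhedral complex} dual to a regular (coherent) subdivision, see \cite{MAST}, so coverage of $\R^n$, closure under taking faces, and the common-face intersection property all hold. Second, I would observe that this is precisely the content invoked from \cite{MAST} (the statement is cited as \cite{MAST}), so the proof amounts to transporting that argument verbatim, noting only that all data entering it — namely the values $w_J=v(P_J(g))$ for $J$ with $P_J(g)\ne 0$, and the set of vanishing indices $\{J:P_J(g)=0\}$ — are the same for every $g\in U(d)$ by the choice of $U(d)$ in Lemma \ref{groebpoly}. Hence the complex produced does not depend on $g\in U(d)$, and we may call it $\Cc^d$.

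\textbf{Main obstacle.} There is essentially no new mathematical difficulty here: the heart of the matter is the standard fact that the cells of a regular subdivision of $\R^n$ induced by a finite family of affine functions form a polyhedral complex, which is exactly what \cite{MAST} establishes in constructing Gr\"obner complexes. The only point requiring care — and the one I would make explicit — is that the combinatorial data (the valuations $v(P_J(g))$ and which Pl\"ucker coordinates vanish) are genuinely constant on $U(d)$; this is guaranteed by Lemma \ref{groebpoly}, and it is what upgrades "$\Cc^d$ is a polyhedral complex for each $g$" to "$\Cc^d$ is the \emph{same} polyhedral complex for all $g\in U(d)$." Thus the proof is short: cite the complex structure from \cite{MAST}, then appeal to the $g$-independence of the defining data from Lemma \ref{groebpoly}.
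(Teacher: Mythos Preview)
Your proposal is correct and matches the paper's approach: the paper gives no proof of its own for this lemma but simply attributes the polyhedral-complex structure to \cite{MAST}, relying implicitly on the $g$-independence of the defining data $v(P_J(g))$ established in Lemma~\ref{groebpoly}. Your sketch spells out precisely this reasoning---the cells arise as the domains of linearity of the piecewise-linear function $\omega\mapsto\min_J\{v(P_J(g))+\omega\cdot M_J\}$, which is the standard regular-subdivision construction from \cite{MAST}, and the constancy of the valuations on $U(d)$ makes the resulting complex independent of $g$.
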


With these prerequisites the proof of Theorem \ref{gengrobcomp} can be completed in the same way as is done in \cite{MAST}.

\begin{proof}[Proof of Theorem \ref{gengrobcomp}]
For each $d\in \N$ there exists an open set $\emptyset\neq U(d)\subset V$, such that the collection of the closures of all $C_{g(I)}^d[\omega]$ for $\omega\in\R^n$ is a fixed polyhedral complex $\Cc^d$ in $\R^n$ for $g\in U(d)$ by Lemma \ref{degcomplex}. It remains to show that there is a finite set $\Dc\subset \N$ and an open set $\emptyset\neq U\subset V$, such that for $g\in U$ we have $$\inom_{\omega}(g(I))=\inom_{\omega'}(g(I)) \Leftrightarrow \inom_{\omega}(g(I))_d=\inom_{\omega'}(g(I))_d \text{ for all }d\in \Dc.$$ In this case we consider the common refinement of all $\Cc^d$ for $d\in \Dc$ containing the closures of the relatively open polyhedra $C_{g(I)}[\omega]=\bigcap_{d\in\Dc} C_{g(I)}^d[\omega]$. These polyhedra are the equivalence classes of the relation of inducing the same initial ideal $\inom_{\omega}(g(I))=\inom_{\omega'}(g(I))$ for two elements $\omega,\omega'\in \R^n$ for $g\in U$. This proves Theorem \ref{gengrobcomp}.

To prove the above claim recall that $g(I)\subset {\K}[x_1,\ldots,x_n]$ has the same Hilbert function for every $g\in V$. Moreover, the Hilbert function is preserved by taking initial ideals $\inom_{\omega}(g(I))\subset {\kx}[x_1,\ldots,x_n]$ by Proposition \ref{hilbinit}. The Hilbert function is also preserved if initial ideals of $\inom_{\omega}(g(I))$ are taken in classical Gr\"obner basis theory with respect to some term order, (see \cite[Theorem 15.26]{E}). Any such initial ideal is one of finitely many monomial ideals, as there are only finitely many monomial ideals with the same Hilbert function, see \cite[Corollary 2.2]{MA}. We can now take $\Dc$ to be the set of total degrees of all minimal generators of all these monomial ideals, since every ideal $\inom_{\omega}(g(I))$ has a Gr\"obner basis of polynomials in these degrees. The claim now follows from the general fact that two graded ideals coincide, if they coincide in the degrees appearing in a generating system for each of them.
\end{proof}

This already implies that there are only finitely many possibilities of what the tropical variety can be under a generic coordinate change which will be needed to prove that in fact, there is only one such possibility in the main theorem. This follows from Theorem \ref{gengrobcomp} together with the fact that the tropical variety always is a subcomplex of the Gr\"obner complex.

\begin{cor}\label{finiteoptions}
Let $I\subset S_{\K}$ be a graded ideal and $V\subset \GL_n({\kx})$ a subvariety. Then there exists a Zariski-open set $\emptyset\neq U\subset V$, such that the tropical variety $T(g(I))$ is one of a finite set of polyhedral complexes for all $g\in U$.
\end{cor}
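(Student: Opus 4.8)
The plan is to deduce Corollary \ref{finiteoptions} directly from Theorem \ref{gengrobcomp} together with the observation that, once the ambient Gr\"obner complex is fixed, a tropical variety is determined by purely finite data. First I would invoke Theorem \ref{gengrobcomp} to obtain a Zariski-open $\emptyset\neq U\subset V$ on which $\GC(g(I))$ is one and the same polyhedral complex $\Cc$ for all $g\in U$. The crucial point is that $T(g(I))$ is always a subcomplex of $\GC(g(I))$: indeed, by our definition of the tropical variety, $T(g(I))$ is the subcomplex of $\GC(g(I))$ induced on the set of $\omega$ for which $\inom_\omega(g(I))$ contains no monomial, and whether $\inom_\omega(g(I))$ contains a monomial is constant on the relative interior of each cell of $\GC(g(I))$ (since $\inom_\omega(g(I))$ itself is). Hence $T(g(I))$ is a union of cells of $\Cc$.

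Next I would argue that a complete polyhedral complex has only finitely many subcomplexes. This is immediate once we know $\Cc$ has only finitely many cells, which holds because $\Cc$ is a Gr\"obner complex: by \cite[Chapter 2]{MAST} it is a polyhedral complex with finitely many faces (it is obtained as the common refinement of the finitely many degree-$d$ complexes $\Cc^d$ for $d$ in the finite set $\Dc$ produced in the proof of Theorem \ref{gengrobcomp}, each of which has finitely many cells). A subcomplex is determined by the subset of cells it contains, so there are at most $2^{|\Cc|}$ subcomplexes, in particular finitely many. Therefore, as $g$ ranges over $U$, the tropical variety $T(g(I))$, being always one of these finitely many subcomplexes of the fixed complex $\Cc$, takes only finitely many values.

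Finally I would assemble these observations: on the open set $U$ from Theorem \ref{gengrobcomp}, every $T(g(I))$ is a subcomplex of the single complex $\Cc$, and there are only finitely many such subcomplexes, so $\{T(g(I)) : g\in U\}$ is a finite set of polyhedral complexes, which is exactly the assertion of Corollary \ref{finiteoptions}. I do not expect a genuine obstacle here; the only point requiring a little care is to state precisely why ``subcomplex of $\GC(g(I))$'' together with ``finitely many cells'' gives finiteness of the possible tropical varieties \emph{as polyhedral complexes} (not merely as sets), and this is handled by noting that both the cells of $\Cc$ and the combinatorics of which cells lie in the subcomplex are fixed once $\Cc$ is fixed. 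The real work — producing the fixed complex $\Cc$ — has already been done in Theorem \ref{gengrobcomp}, so this corollary is a short formal consequence.
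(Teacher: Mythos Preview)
Your proposal is correct and follows exactly the same route as the paper: the paper deduces the corollary in one line from Theorem \ref{gengrobcomp} together with the fact that $T(g(I))$ is always a subcomplex of the (now fixed) Gr\"obner complex. Your write-up merely makes explicit the finiteness of the cells of $\Cc$ and hence of its subcomplexes, which the paper leaves implicit.
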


\section{Generating Systems of Projection Ideals}\label{techgensets}

To prove the existence of generic tropical varieties on a given subvariety $V$ of $\GL_n({\kx})$ some ideas and results from \cite{HETH} need to be generalized. In particular, we want to apply \cite[Theorem 3.5]{HETH} in a generic setting. There an ideal $J$ is defined corresponding to a prime ideal $I$ and a linear projection $\pi$. The idea behind this definition is, that the tropical variety of $J\cap {\K}[x_1,\ldots,x_n]$ essentially is the image of the tropical variety of $I$ under $\pi$, see Proposition \ref{hypecase} (originally \cite[Corollary 3.6]{HETH}). Thus the ideals $J$ and $J\cap {\K}[x_1,\ldots,x_n]$ provide a tool describe the tropical variety of $I$ by the simpler tropical varieties defined by $J$ or $J\cap {\K}[x_1,\ldots,x_n]$. We will loosely refer to these ideals as \emph{projection ideals}.

In our setting all projection ideals $J(g)$ obtained by this construction corresponding to the prime ideals $g(I)$ for $g\in V$ need to be dealt with simultaneously. To handle these it is convenient to consider the extension of $I$ in the polynomial ring over the field extension ${\K}(V)$ of ${\K}$ as given in Definition \ref{gendef}. Then one can do the construction in this polynomial ring as well, defining an ideal $J(y)$ depending on $V$. The results needed are then obtained by evaluating the residue classes of variables $y_{ij}$ at the $g_{ij}$ for a given $g\in V$. For this the connection between $J(y)$ and $J(g)$ for $g\in V$ needs to be established. The aim of this section is to introduce these auxiliary ideals and show: There exists a finite generating system of $J(y)$ and a non-empty open set $U\subset V$, such that if the $g_{ij}$ are substituted for the $y_{ij}$ in these generators, a generating set of $J(g)$ is obtained for every $g\in U$.

By means of notation for a ring $A$ and $l\in \N$ let $$A[x,\lambda,\theta]=A[x_1,\ldots,x_n,\lambda_1,\ldots,\lambda_l,\theta_1,\ldots,\theta_l]$$ be the polynomial ring in $n+2l$ variables over $A$. We fix the following data for the rest of this section:
\begin{itemize}
\item A set of $l$ linearly independent vectors $\left\{u^{(1)},\ldots,u^{(l)}\right\}$ in $\Z^n$,
\item the composite variables $\tau_1,\ldots,\tau_n$ with $$\tau_i=x_i \prod_{u_i^{(j)}\geq 0} \lambda_j^{u_i^{(j)}}\prod_{u_i^{(j)}<0}\theta_j^{-u_i^{(j)}},$$
\item a graded ideal $I\subset S_{\K}={\K}[x_1,\ldots,x_n]$,
\item a subvariety $V\subset \GL_n({\kx})$.
\end{itemize}

All constructions in this section will depend on this data. We first review the definition of the ideal $J$ in \cite[Theorem 3.5]{HETH} and adapt it to our purposes. Since we want to use Gr\"obner basis theory, however, we do not want work in the ring $${\K}[x_1,\ldots,x_n,\lambda_1^{\pm 1},\ldots,\lambda_l^{\pm 1}]$$ from the start as is done in \cite{HETH}, but in the ``large'' polynomial ring ${\K}[x,\lambda,\theta]$.

\begin{notation}\label{jdefn}
Recall the ${\K}$-algebra homomorphism $y$ from Definition \ref{gendef}. We define the following notation for projection ideals
\begin{eqnarray*}
J(y) & = & ({y}(f)(\tau_1,\ldots,\tau_n):f\in I)\subset {\K}(V)[x,\lambda,\theta],\\
\check{J}(y) & = & ({y}(f)(\tau_1,\ldots,\tau_n):f\in I)\subset {\K}[V][x,\lambda,\theta],\\
 J(g)  &=&  (g(f)(\tau_1,\ldots,\tau_n):f\in I)\subset {\K}[x,\lambda,\theta] \text{ for $g\in \GL_n({\kx})$}.
\end{eqnarray*}
\end{notation}

Note that the ideals $J(y)$ and $J(g)$ correspond to the ideals $J$ defined in \cite[Section 3]{HETH}. The ideal $\check{J}(y)$ is of auxiliary purpose for this section (to prove the second claim of Lemma \ref{genofj}) and is of no further importance for us.

Since ${\K}[V][x,\lambda,\theta]$ is noetherian, there exists a finite system of generators among the given generators of $\check{J}(y)$. We fix such a generating system $$\GG=\left\{{y}(f_1)(\tau),\ldots,{y}(f_r)(\tau)\right\}$$ of $\check{J}(y)$ for some ${y}(f_i)(\tau)\in {\K}[V][x,\lambda,\theta]$. Note that this is also a system of generators of $J(y)\subset {\K}(V)[x,\lambda,\theta]$. Since the chosen generators are elements of ${\K}[V][x,\lambda,\theta]$, there are no denominators in the $y_{ij}$ and we can substitute every $g\in V$ for $y$. Hence, this system of generators also defines a set of generators $\GG(g)=\left\{g(f_1)(\tau),\dots,g(f_r)(\tau)\right\}$ of each ideal $J(g)$ for $g\in V$ by the following simple observation.

\begin{lemma}\label{genofj}
For every $g\in V$ we have $J(g)=(g(f_1)(\tau),\dots,g(f_r)(\tau))$. Moreover, there is an open set $\emptyset\neq U_{\GG}\subset V$, such that all polynomials in $$\left\{g(f_1)(\tau),\dots,g(f_r)(\tau)\right\}$$ have the same support for $g\in U_{\GG}$.
\end{lemma}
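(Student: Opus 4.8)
The plan is to prove the two assertions separately. For the first assertion I would argue that substituting $g\in V$ for $y$ in a generating system is compatible with the ideal-generation, using that $\GG$ generates $\check{J}(y)\subset {\K}[V][x,\lambda,\theta]$. Explicitly, since ${y}(f)(\tau)\in\check J(y)$ for every $f\in I$, each such ${y}(f)(\tau)$ can be written as $\sum_{i=1}^r a_i\, {y}(f_i)(\tau)$ with $a_i\in {\K}[V][x,\lambda,\theta]$; applying the evaluation ${\K}[V]\to {\kx}$, $y_{ij}\mapsto g_{ij}$ (which extends coefficientwise to a ${\K}$-algebra homomorphism ${\K}[V][x,\lambda,\theta]\to {\K}[x,\lambda,\theta]$, well-defined precisely because no denominators occur) gives $g(f)(\tau)=\sum_{i=1}^r a_i(g)\, g(f_i)(\tau)$. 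Hence every generator $g(f)(\tau)$ of $J(g)$ lies in $(g(f_1)(\tau),\dots,g(f_r)(\tau))$, and the reverse inclusion is immediate since each $g(f_i)(\tau)$ is itself of the form $g(f)(\tau)$ for $f=f_i\in I$. This proves $J(g)=(g(f_1)(\tau),\dots,g(f_r)(\tau))$ for all $g\in V$.

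For the second assertion I would write each fixed generator as ${y}(f_i)(\tau)=\sum_{\alpha} c_{i\alpha}(y)\, m_\alpha$, where the $m_\alpha$ range over the finitely many monomials in $x,\lambda,\theta$ that appear, and each coefficient $c_{i\alpha}(y)\in{\K}[V]$, i.e. $c_{i\alpha}(y)\in({\kx}[Y]/P)\{\!\!\{t\}\!\!\}$, is a generalized power series in $t$ whose coefficients are polynomial expressions in the residue classes of the $y_{ij}$. The support of $g(f_i)(\tau)$ is exactly the set of $\alpha$ with $c_{i\alpha}(g)\neq 0$. Now $c_{i\alpha}(g)=0$ forces every $t$-coefficient of $c_{i\alpha}(y)$ to vanish at $g$; conversely if some $t$-coefficient, say the leading one, of $c_{i\alpha}(y)$ is a nonzero element of ${\kx}[Y]/P$ and does not vanish at $g$, then $c_{i\alpha}(g)\neq 0$. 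For each of the finitely many pairs $(i,\alpha)$ with $c_{i\alpha}(y)\neq 0$ let $d_{i\alpha}\in{\kx}[Y]/P$ be a nonzero $t$-coefficient of $c_{i\alpha}(y)$; since $V$ is irreducible (${\kx}[Y]/P$ is a domain), the nonvanishing locus $D(d_{i\alpha})$ is a nonempty Zariski-open subset of $V$, and so is the finite intersection
\begin{equation*}
U_{\GG}=\bigcap_{(i,\alpha):\, c_{i\alpha}(y)\neq 0} D(d_{i\alpha}).
\end{equation*}
For $g\in U_{\GG}$ we then have $c_{i\alpha}(g)\neq 0$ whenever $c_{i\alpha}(y)\neq 0$, and trivially $c_{i\alpha}(g)=0$ whenever $c_{i\alpha}(y)=0$; hence the support of $g(f_i)(\tau)$ equals $\{\alpha: c_{i\alpha}(y)\neq 0\}$, independent of $g\in U_{\GG}$. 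This gives the desired open set.

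The only mildly delicate point — and the step I would treat most carefully — is making precise the ``coefficients of $c_{i\alpha}(y)$ as a power series in $t$'' and checking that the evaluation $y_{ij}\mapsto g_{ij}$ commutes with extracting these $t$-coefficients; this is exactly the kind of identification supplied by the Proposition preceding Corollary \ref{valfieldext} (the embedding ${\K}[Y]/P{\K}[Y]\hookrightarrow({\kx}[Y]/P)\{\!\!\{t\}\!\!\}$), together with the fact that only finitely many $t$-exponents occur among all the $c_{i\alpha}$ since $\GG$ is finite. Once that bookkeeping is fixed, both statements follow from the elementary observations above; no further input is needed.
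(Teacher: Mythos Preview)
Your argument is correct and follows the same approach as the paper. For the first assertion you reproduce the paper's proof essentially verbatim: write ${y}(f)(\tau)$ as an ${\K}[V][x,\lambda,\theta]$-combination of the chosen generators and evaluate at $g$. For the second assertion the paper gives only the single sentence ``$U_{\GG}$ can be chosen as the set of all $g\in V$ such that no coefficient in the $g(f_i)(\tau)$ vanishes''; your treatment expands this by using the embedding ${\K}[Y]/P{\K}[Y]\hookrightarrow({\kx}[Y]/P)\{\!\!\{t\}\!\!\}$ to produce, for each nonzero coefficient $c_{i\alpha}(y)$, a nonzero element $d_{i\alpha}\in{\kx}[Y]/P$ whose nonvanishing locus is open in $V$. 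This is the right way to justify that ``coefficient nonvanishing'' is a Zariski-open condition on $V\subset\GL_n({\kx})$ when the coefficients live in ${\K}$ rather than ${\kx}$.

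One small inaccuracy in your final paragraph: the claim that ``only finitely many $t$-exponents occur among all the $c_{i\alpha}$'' is false in general, since each $c_{i\alpha}(y)\in{\K}[V]$ may be a generalized power series with a well-ordered but infinite exponent set. Fortunately your argument does not need this: you only select \emph{one} nonzero $t$-coefficient $d_{i\alpha}$ per pair $(i,\alpha)$, and there are finitely many such pairs, so $U_\GG$ is a finite intersection regardless. The compatibility of evaluation with extracting $t$-coefficients follows directly from the explicit formula for $\psi$ in the proposition preceding Corollary~\ref{valfieldext}, without any finiteness hypothesis on the exponents.
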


\begin{proof}
Let $g\in V$ and $g(f)(\tau)$ be one of the generators of $J(g)$ from Notation
\ref{jdefn}. Then ${y}(f)\in \check{J}(y)$, so there exist
$h_1,\ldots,h_r\in {\K}[V][x,\lambda,\theta]$ with $${y}(f)(\tau)=\sum_{i=1}^r
h_i({y}) {y}(f_i)(\tau).$$ This implies
$$g(f)(\tau)=\sum_{i=1}^r h_i(g) g(f_i)(\tau)\in(g(f_1)(\tau),\dots,g(f_r)(\tau)),$$ proving that $J(g)=(g(f_1)(\tau),\dots,g(f_r)(\tau))$. The set $U_{\GG}$ can be chosen as the set of all $g\in V$, such that no coefficient in the $g(f_1)(\tau),\dots,g(f_r)(\tau)$ vanishes.
\end{proof}

Following the procedure in \cite{HETH} we later want to consider the ideals $J(g)\subset {\K}[x,\lambda,\theta]$ in the quotient ring ${\K}[x,\lambda,\lambda^{-1}]$. We need to ensure that in passing from ${\K}(V)[x,\lambda,\theta]$ to the quotient ${\K}(V)[x,\lambda,\lambda^{-1}]$ we keep a finite generating system of the residue ideal of $J(y)$, such that if we substitute ``generic'' $g\in V$ for $y$ we obtain a generating system of the residue ideal of $J(g)$ in ${\K}[x,\lambda,\lambda^{-1}]$. To do this let $W_A\subset A[x,\lambda,\theta]$ be the ideal $W_A=(\lambda_i\theta_i-1: i=1,\ldots,l)$ for $A={\K}$ or $A={\K}(V)$.

We deal with the above problem for the ideals $J(y)+W_{{\K}(V)}$ and $J(g)+W_{\K}$ using Gr\"obner basis theory. The idea is to guarantee that the Buchberger algorithm applied to generators of $J(y)+W_{{\K}(V)}$ consists of exactly the same computational steps as if it is applied to the corresponding generators $J(g)+W_{\K}$ generically.

Let $\succ$ be the lexicographic term order on $A[x,\lambda,\theta]$ induced by $$\lambda_1\succ\ldots\succ\lambda_l\succ\theta_1\succ\ldots\succ\theta_l\succ x_1\succ\ldots\succ x_n.$$ Recall that this is an elimination order with respect to the variables $\lambda_1,\ldots,\lambda_l,\theta_1,\ldots,\theta_l$, see \cite[p. 361, Example 2]{E}.

\begin{lemma}\label{groebbigideals}
There exists an open subset $\emptyset \neq U_{\Gc}\subset
V$ and polynomials $$h_1({y}),\ldots,h_s({y})\in J(y)+W_{{\K}(V)}\subset
{\K}(V)[x,\lambda,\theta],$$ such that:
\begin{enumerate}
\item
$\Gc=\left\{h_1({y}),\ldots,h_s({y})\right\}$ is the reduced Gr\"obner
basis of the ideal $J(y)+W_{{\K}(V)}$ in ${\K}(V)[x,\lambda,\theta]$ with respect
to $\succ$.
\item
$\Gc(g)=\left\{h_1(g),\ldots,h_s(g)\right\}$ is the reduced
Gr\"obner basis of the ideal $J(g)+W_{{\K}}$ in ${\K}[x,\lambda,\theta]$
with respect to $\succ$ for all $g\in U_{\Gc}$.
\item
The set $\Gc$ and all the sets $\Gc(g)$ for $g\in U_{\Gc}$ have the same support.
\end{enumerate}
\end{lemma}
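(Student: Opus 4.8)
The plan is to run Buchberger's algorithm symbolically, keeping track of which field elements need to be nonzero in order for the symbolic computation to specialize correctly. I would start from the finite generating system $\GG=\{y(f_1)(\tau),\ldots,y(f_r)(\tau)\}$ of $\check J(y)\subset {\K}[V][x,\lambda,\theta]$ from Lemma \ref{genofj}, together with the generators $\lambda_i\theta_i-1$ of $W_{{\K}(V)}$, all of which lie in ${\K}[V][x,\lambda,\theta]$; so they have no denominators in the $y_{ij}$ and can be evaluated at any $g\in V$. The key observation is that each step of Buchberger's algorithm with respect to the fixed term order $\succ$ — forming an $S$-polynomial, and reducing it modulo the current partial basis — is a finite sequence of ${\K}(V)$-linear operations whose intermediate coefficients are rational functions in the residue classes of the $y_{ij}$. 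At every such step, whether the leading term of the current remainder is a given monomial, and which coefficient plays the role of the leading coefficient, is determined by the non-vanishing of finitely many of these rational functions.

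The core step is therefore: carry out Buchberger's algorithm over ${\K}(V)[x,\lambda,\theta]$ to produce the reduced Gröbner basis $\Gc=\{h_1(y),\ldots,h_s(y)\}$ of $J(y)+W_{{\K}(V)}$. Since the algorithm terminates, it uses only finitely many $S$-polynomial and reduction steps; collect the finite list of numerators and denominators of all coefficients that occur, and let $U_{\Gc}\subset V$ be the non-empty Zariski-open set on which none of these finitely many rational functions in the $y_{ij}$ vanishes (so in particular every denominator appearing in the $h_i(y)$ is invertible, and every leading coefficient that was declared nonzero during the run stays nonzero). For $g\in U_{\Gc}$, one then argues by an immediate induction on the number of algorithm steps that running Buchberger's algorithm on the specialized input $\GG(g)\cup\{\lambda_i\theta_i-1\}$ — which generates $J(g)+W_{\K}$ by Lemma \ref{genofj} — proceeds through exactly the same sequence of leading monomials and produces exactly $\Gc(g)=\{h_1(g),\ldots,h_s(g)\}$, which is consequently the reduced Gröbner basis of $J(g)+W_{\K}$. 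This gives (i) and (ii). For (iii), one additionally throws into the defining conditions of $U_{\Gc}$ the requirement that no coefficient appearing in any $h_i(y)$ vanishes under specialization (this is a further finite list of non-vanishing conditions, shrinking $U_{\Gc}$ but keeping it open and non-empty); then the support of $h_i(g)$ equals the support of $h_i(y)$ for every $g\in U_{\Gc}$.

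I expect the main obstacle to be making the "same computational steps" induction fully rigorous, specifically handling the comparison of leading terms at each reduction step: one must know that for $g\in U_{\Gc}$ the leading coefficient of each intermediate polynomial over ${\K}[x,\lambda,\theta]$ is exactly the specialization of the leading coefficient over ${\K}(V)[x,\lambda,\theta]$, and that no monomial that was zero generically suddenly becomes leading after specialization — both of which are exactly what the non-vanishing conditions defining $U_{\Gc}$ ensure, but the bookkeeping of which conditions are needed must be done carefully. A subtlety worth noting is that the coefficients live in ${\K}(V)$, i.e. involve the power-series variable $t$; however, by Corollary \ref{valfieldext} we may view everything inside $Q(\kx[Y]/P)\{\!\{t\}\!\}$ and, as in the proof of Lemma \ref{groebpoly}, arrange that the leading ($t$-lowest) coefficients of the relevant power series do not vanish under specialization, so that non-vanishing of a coefficient in ${\K}(V)$ is again a Zariski-open condition on $g\in V$. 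With these conventions in place the argument is a routine, if lengthy, specialization-of-Buchberger argument.
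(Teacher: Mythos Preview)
Your proposal is correct and follows essentially the same approach as the paper: run Buchberger's algorithm over ${\K}(V)[x,\lambda,\theta]$ starting from $\GG\cup\{\lambda_i\theta_i-1\}$, then define $U_{\Gc}\subset U_{\GG}$ by imposing the finitely many non-vanishing conditions on the coefficients arising in the computation, so that the specialized run for $g\in U_{\Gc}$ is step-by-step identical. Your treatment is in fact more detailed than the paper's (which dispatches the argument in a short paragraph), and your remark on why non-vanishing of an element of ${\K}(V)$ is a Zariski-open condition on $V$ via the embedding of Corollary~\ref{valfieldext} is a useful clarification that the paper leaves implicit.
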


\begin{proof}
We start with the finite generating sets $\GG\cup\left\{\lambda_1\theta_1-1,\ldots,\lambda_l\theta_l-1\right\}$ of the ideal
$J(y)+W_{{\K}(V)}$ and $\GG(g)\cup\left\{\lambda_1\theta_1-1,\ldots,\lambda_l\theta_l-1\right\}$
of $J(g)+W_{{\K}}$, all of which have the same support for all $g\in U_{\GG}$ as in Lemma \ref{genofj}. Proceeding by applying the Buchberger Algorithm we compute the reduced Gr\"obner basis $\left\{h_1({y}),\ldots,h_s({y})\right\}$ of $J(y)+W_{{\K}(V)}$ with respect to $\succ$. In each of the finitely many steps finitely many polynomials appear, which all have quotients of residue classes of polynomials in the
${y_{ij}}$ as coefficients. Choose $U_{\Gc}\subset U_{\GG}\subset V$, such that none of these residue classes
vanishes for any $g\in U_{\Gc}$. Then $U_{\Gc}$ is non-empty and open and we have $\Gc(g)=\left\{h_1(g),\ldots,h_s(g)\right\}$ is a reduced
Gr\"obner basis of $J(g)+W_{{\K}}$ with the same support.
\end{proof}

The ideals defining the tropical hypersurfaces used to express tropical varieties in Section \ref{generictropicalvarieties} are the intersections of the quotient ideals of $J(g)+W_{\K}$ in ${\K}[x,\lambda,\theta]/W_{\K}$ with $S_{\K}={\K}[x_1,\ldots,x_n]$:

\begin{notation}\label{tilddefn}
For $A={\K}$ oder $A={\K}(V)$ let $$\varphi_A: A[x,\lambda,\theta]\longrightarrow
A[x,\lambda,\theta]/W_A$$ be the canonical ring epimorphism. Consider
the images $\varphi_{{\K}(V)}(J(y))$ and $\varphi_{{\K}}(J(g))$. Then we denote the ideal
$\varphi_{{\K}(V)}(J(y))\cap S_{{\K}(V)}$ by $\tilde{J}(y)$ and the ideal $\varphi_{{\K}}(J(g))\cap S_{\K}$ by $\tilde{J}(g)$.
\end{notation}

Note that $\tilde{J}(g)\subset S_{\K}$ is exactly the ideal $J\cap {\K}[x_1,\ldots,x_n]$ as defined in \cite[Section 3]{HETH} corresponding to the ideal $g(I)$ instead of $I$ for $g\in V$. In particular, we have the following result, which has been proved in \cite[Lemma 3.3]{HETH}.

\begin{lemma}\label{JsubI}
With the notation from above we have that $\tilde{J}(y)\subset y(I)$ and $\tilde{J}(g)\subset g(I)$ for every $g\in\GL_n({\kx})$.
\end{lemma}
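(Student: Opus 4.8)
The plan is to construct, for each $g\in\GL_n({\kx})$, an $\K$-algebra retraction of the big polynomial ring $L[x,\lambda,\theta]$ onto $S_L$ that sends every composite monomial $\tau_i$ back to $x_i$ and kills each relation $\lambda_j\theta_j-1$; applying this retraction to a witness of membership in $\tilde J(g)$ then lands the element inside $g(I)$. This is precisely the argument of \cite[Lemma 3.3]{HETH}, transported to the present notation, and it will work verbatim for both the $\tilde J(g)$ and the $\tilde J(y)$ statements.

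Concretely, I would introduce the $\K$-algebra homomorphism $\varepsilon\colon L[x,\lambda,\theta]\to S_L$ defined by $\varepsilon(x_i)=x_i$ for all $i$ and $\varepsilon(\lambda_j)=\varepsilon(\theta_j)=1$ for all $j$. Since every exponent occurring in $\tau_i=x_i\prod_{u_i^{(j)}\geq 0}\lambda_j^{u_i^{(j)}}\prod_{u_i^{(j)}<0}\theta_j^{-u_i^{(j)}}$ is non-negative, each $\tau_i$ is a genuine element of $L[x,\lambda,\theta]$ and $\varepsilon(\tau_i)=x_i$; hence $\varepsilon\bigl(g(f)(\tau_1,\dots,\tau_n)\bigr)=g(f)(x_1,\dots,x_n)=g(f)\in g(I)$ for every $f\in I$, while $\varepsilon(\lambda_j\theta_j-1)=0$. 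Now take $h\in\tilde J(g)=\varphi_L(J(g))\cap S_L$. Viewing $h\in S_L\subset L[x,\lambda,\theta]$, the hypothesis $\varphi_L(h)\in\varphi_L(J(g))$ means $h-p\in W_L=\ker\varphi_L$ for some $p\in J(g)$. Writing $p=\sum_k c_k\,g(f_k)(\tau_1,\dots,\tau_n)$ with $c_k\in L[x,\lambda,\theta]$, $f_k\in I$, and $h-p=\sum_j d_j(\lambda_j\theta_j-1)$ with $d_j\in L[x,\lambda,\theta]$, and applying $\varepsilon$, I obtain $h=\varepsilon(h)=\varepsilon(p)+\varepsilon(h-p)=\sum_k\varepsilon(c_k)\,g(f_k)\in g(I)$. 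This gives $\tilde J(g)\subset g(I)$.

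The inclusion $\tilde J(y)\subset y(I)$ follows in exactly the same way over the field $\K(V)$ in place of $\K$: one uses the $\K(V)$-algebra homomorphism $L(V)[x,\lambda,\theta]\to S_{L(V)}$ sending $x_i\mapsto x_i$ and $\lambda_j,\theta_j\mapsto1$, which maps each generator $y(f)(\tau_1,\dots,\tau_n)$ of $J(y)$ to $y(f)$ and annihilates $W_{\K(V)}$; the same computation then places any $h\in\tilde J(y)$ inside the ideal generated by $\{y(f):f\in I\}$, which by the convention fixed in Definition \ref{gendef} is $y(I)$. I expect essentially no obstacle here: the only point deserving care is that the $\tau_i$ genuinely lie in the polynomial (rather than Laurent) ring, so that substitution into $g(f)$ followed by evaluation at $1$ are legitimate ring operations — and this is exactly why the negative exponents were absorbed into the $\theta_j$ in the definition of the $\tau_i$.
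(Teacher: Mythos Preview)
Your argument is correct and is exactly the substitution-at-one argument of \cite[Lemma 3.3]{HETH}, which is precisely what the paper invokes for this lemma without reproducing a proof. There is nothing to add: the retraction $\varepsilon$ sending $\lambda_j,\theta_j\mapsto 1$ kills $W$ and collapses $\tau_i$ to $x_i$, so any element of $\tilde J(g)$ (resp.\ $\tilde J(y)$) is pushed into $g(I)$ (resp.\ $y(I)$), just as you wrote.
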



By the definition of $W_A$ there is a canonical $A$-algebra isomorphism between $A[x,\lambda,\theta]/W_A$ and $A[x,\lambda,\lambda^{-1}]$ for $A={\K}$ or $A={\K}(V)$. The elements of $A[x,\lambda,\theta]/W_A$ can thus be thought of as polynomial expressions in the $x$,$\lambda$ and $\lambda^{-1}$ rather than as residue classes. Moreover, $A[x,\lambda,\lambda^{-1}]$ is a localization of $A[x,\lambda]$, which will be of use in the following statement. Note that the polynomial ring $S_A=A[x_1,\ldots,x_n]\subset
A[x,\lambda,\theta]$ is mapped injectively to $\varphi_A(S_A)\subset A[x,\lambda,\lambda^{-1}]$, since $W_A\cap S_A=\left\{0\right\}$.
Therefore we can identify $S_A$ with $\varphi_A(S_A)$. 

For the proof of our main theorem we need that for $g\in\GL_n({\kx})$ the ideals $\tilde{J}(y)\subset S_{\K(V)}$ and $\tilde{J}(g)\subset S_{\K}$ are prime if $I\subset S_{\K}$ is prime. A version of this has also been proved in \cite[Theorem 3.11]{HEP}. We include the proof of this statement in our setting.

\begin{lemma}\label{jtildeprime}
Let $I\subset S_{\K}={\K}[x_1,\ldots,x_n]$ be a graded prime ideal. Then the ideal $\tilde{J}(y)$ in $S_{\K(V)}$ is also prime. Moreover, all ideals $\tilde{J}(g)$ are prime for $g\in\GL_n({\kx})$.
\end{lemma}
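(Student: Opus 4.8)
The plan is to reduce everything to the corresponding statement for $I$ itself (i.e.\ for the trivial subvariety $V=\{\mathrm{id}\}$), where $\tilde J(g)$ with $g=\mathrm{id}$ is exactly the ideal $J\cap {\K}[x_1,\ldots,x_n]$ of \cite[Section 3]{HETH}, and then transport primeness along the appropriate field extensions using Proposition \ref{extkrempel} and Remark \ref{primesinext}. Concretely, I would proceed in three steps. First, observe that since $I\subset S_{\K}$ is prime, Remark \ref{primesinext} gives that $y(I)\subset S_{\K(V)}$ is prime and, for every $g\in\GL_n({\kx})$, the ideal $g(I)\subset S_{\K}$ is prime (being the image of the prime $I$ under the ${\K}$-algebra \emph{automorphism} $g$). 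Thus we are reduced to showing: if $I'\subset S_{\K'}$ is a graded prime ideal (where $\K'$ is ${\K}$ or ${\K}(V)$), then the associated ideal $\tilde J'$ obtained by the construction of Notation \ref{tilddefn} — equivalently the ideal $J'\cap\K'[x_1,\ldots,x_n]$ in the sense of \cite[Section 3]{HETH} — is prime.

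Second, I would establish this reduced statement. The key point is that the construction of $\tilde J'$ from $I'$ is, up to the contraction along $W_{\K'}$, geometrically the coordinate ring of the image of the torus action/projection attached to the vectors $u^{(1)},\ldots,u^{(l)}$. More precisely: $\varphi_{\K'}(J(y))$ (resp.\ $\varphi_{\K'}(J(g))$) is the ideal in $\K'[x,\lambda,\lambda^{-1}]$ generated by the $f(\tau_1,\ldots,\tau_n)$, and the substitution $x_i\mapsto\tau_i$ is nothing but the comorphism of a monomial (hence dominant onto its image) morphism of the torus $(\K'^*)^l\times(\text{domain of }I')$, so its kernel when restricted back to $\K'[x_1,\ldots,x_n]$ defines an irreducible variety as soon as the source is irreducible. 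Since $I'$ is prime, the source is irreducible, so $\tilde J'$ is prime. This is precisely the content of \cite[Theorem 3.11]{HEP} / \cite[Lemma 3.3]{HETH} in the constant-coefficient-style argument, and it goes through verbatim over any field: the only inputs are that $\K'[x,\lambda]$ is a domain, that a localization of a domain is a domain, and that the contraction of a prime ideal is prime. Alternatively, one can argue purely ring-theoretically: $\varphi_{\K'}(J')$ is the kernel of the $\K'$-algebra map $\K'[x,\lambda,\lambda^{-1}]\to \mathrm{Frac}(S_{\K'}/I')[\lambda,\lambda^{-1}]$ sending $x_i$ to $\overline{\tau_i}$ and $\lambda_j$ to $\lambda_j$; this target is a domain, so $\varphi_{\K'}(J')$ is prime, and hence so is its contraction $\tilde J'=\varphi_{\K'}(J')\cap S_{\K'}$.

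Third, I would assemble the pieces. Applying step two with $I'=y(I)$ and $\K'={\K}(V)$ (using that $y(I)$ is prime by Remark \ref{primesinext}) gives that $\tilde J(y)$ is prime in $S_{\K(V)}$. Applying step two with $I'=g(I)$ and $\K'={\K}$ (using that $g(I)$ is prime since $g$ is an automorphism) gives that $\tilde J(g)$ is prime in $S_{\K}$ for every $g\in\GL_n({\kx})$. This completes the proof. The main obstacle I anticipate is bookkeeping rather than substance: one must check carefully that the ``association'' $I'\mapsto\tilde J'$ in Notation \ref{tilddefn} really does commute with the operations $I\rightsquigarrow y(I)$ and $I\rightsquigarrow g(I)$ — i.e.\ that forming $J$ with the twisted polynomials $y(f)(\tau)$ and then contracting is the same as first twisting by $y$ (or $g$) and then running the \cite{HETH} construction — and that the contraction identity $W_{\K'}\cap S_{\K'}=\{0\}$ (already noted before Lemma \ref{jtildeprime}) legitimately identifies $S_{\K'}$ with its image, so that ``$\varphi_{\K'}(J')\cap S_{\K'}$'' is an honest prime ideal of $S_{\K'}$ and not merely of a localization. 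Once that identification is in hand, primeness is immediate from the domain property of $\K'[x,\lambda,\lambda^{-1}]/\varphi_{\K'}(J')$.
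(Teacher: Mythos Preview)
Your proposal is correct and follows essentially the same route as the paper. The paper's proof makes explicit the $\K'[\lambda,\lambda^{-1}]$-algebra automorphism $\psi$ of $\K'[x,\lambda,\lambda^{-1}]$ given by $x_i\mapsto x_i\prod_j\lambda_j^{u_i^{(j)}}$ and observes that $\varphi_{\K'}(J')=\psi\bigl(I'S_{\K'}[\lambda,\lambda^{-1}]\bigr)$, so that $\K'[x,\lambda,\lambda^{-1}]/\varphi_{\K'}(J')\cong (S_{\K'}/I')[\lambda,\lambda^{-1}]$ is a domain; your ``kernel of a map to a domain'' argument is exactly this, just phrased contragrediently, and your bookkeeping concern (that the construction of $J$ applied to $y(I)$ or $g(I)$ agrees with Notation~\ref{jdefn}) is immediate from the fact that $\{y(f):f\in I\}$ generates $y(I)$.
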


\begin{proof}
We will prove the statement for the ideal $\tilde{J}(y)$ in $S_{\K(V)}$. The proof for the ideals $\tilde{J}(g)$ for $g\in\GL_n({\kx})$ can be done analogously. In the first part of the proof we will consider ${\K(V)}[x,\lambda,\lambda^{-1}]$ as an ${\K(V)}[x]$-algebra and denote it by $S_{\K(V)}[\lambda,\lambda^{-1}]$ to display this. In the second part ${\K(V)}[x,\lambda,\lambda^{-1}]$ will be considered as an ${\K(V)}[\lambda,\lambda^{-1}]$-algebra. To emphasize this we will denote it by ${\K(V)}[\lambda,\lambda^{-1}][x]$, when we do so.

Consider the chain of inclusions $$S_{\K}\hookrightarrow S_{\K(V)}\hookrightarrow S_{\K(V)}[\lambda]\hookrightarrow S_{\K(V)}[\lambda,\lambda^{-1}],$$ where the first one is given by $y$ as in Definition \ref{gendef} and the other ones are the natural ones. As $I\subset S_{\K}$ is prime, by Proposition \ref{extkrempel} the extension $y(I)\subset S_{\K(V)}$ is prime (this step is superfluous in the corresponding proof for the $\tilde{J}(g)$). Since $S_{\K(V)}[\lambda]$ is a polynomial ring over $S_{\K(V)}$, the residue ring $S_{\K(V)}[\lambda]/y(I)S_{\K(V)}[\lambda]$ is isomorphic to $(S_{\K(V)}/y(I))[\lambda]$. As $(S_{\K(V)}/y(I))[\lambda]$ is an integral domain, so is $S_{\K(V)}[\lambda]/y(I)S_{\K(V)}[\lambda]$. Hence, $y(I)S_{\K(V)}[\lambda]$ is prime. By \cite[Proposition 2.2(b)]{E} this implies that $y(I)S_{\K(V)}[\lambda,\lambda^{-1}]$ is prime.

Let $\psi: {\K(V)}[\lambda,\lambda^{-1}][x] \rightarrow {\K(V)}[\lambda,\lambda^{-1}][x]$ be the map of ${\K(V)}[\lambda,\lambda^{-1}]$-algebras induced by $$\psi(x_i)=x_i \prod_{j=1}^{l}\lambda_j^{u_i^{(j)}}.$$ This map is an isomorphism with the inverse given by mapping $x_i$ to $\prod_{j=1}^{l}\lambda_j^{-u_i^{(j)}}$. 

With the canonical identification of ${\K(V)}[x,\lambda,\lambda^{-1}]$ with ${\K(V)}[x,\lambda,\theta]/W_{\K(V)}$ the ideal $$\psi (y(I)S_{\K(V)}[\lambda,\lambda^{-1}])\subset {\K(V)}[x,\lambda,\lambda^{-1}]$$ is exactly the ideal $$\varphi_{\K(V)}(J(y))\subset {\K(V)}[x,\lambda,\theta]/W_{\K(V)}$$ as in Notation \ref{tilddefn}. Since $y(I)S_{\K(V)}[\lambda,\lambda^{-1}]$ is prime by the above argument and $\psi$ is an isomorphism, the ideal $\psi(y(I)S_{\K(V)}[\lambda,\lambda^{-1}])$ is also prime. Thus $\varphi_{\K(V)}(J(y))$ is prime. This also means that the intersection $\varphi_{\K(V)}(J(y))\cap S_{\K(V)}$ is prime, which by definition is the ideal $\tilde{J}(y)\subset S_{\K(V)}$.
\end{proof}

The final aim of this section is to compute Gr\"obner bases of the same support of the ideals $\tilde{J}(y)$ and $\tilde{J}(g)$ using elimination with respect to the variables $\lambda_1,\ldots,\lambda_l,\theta_1,\ldots,\theta_l$. To be able to apply this idea to the ideals $\varphi_{{\K}(V)}(J(y))$ and $\varphi_{{\K}}(J(g))$ we need to show that these have the same intersections with $S_{{\K}(V)}$ and $S_{\K}$ respectively as the ideals in Lemma \ref{groebbigideals}:

\begin{lemma}\label{sameideal}
With the above notation $(W_{{\K}(V)}+J(y))\cap S_{{\K}(V)}=\varphi_{{\K}(V)}(J(y))\cap S_{{\K}(V)}=\tilde{J}(y)$ and $(W_{{\K}}+J(g))\cap S_{\K}= \varphi_{{\K}}(J(g))\cap S_{\K}=\tilde{J}(g)$.
\end{lemma}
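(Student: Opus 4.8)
The plan is to prove the chain of equalities by first observing that both identities follow from the single statement
\[
(W_A + \mathfrak J)\cap A[x,\lambda,\theta] \;=\; \varphi_A^{-1}\bigl(\varphi_A(\mathfrak J)\bigr)
\]
restricted to $S_A$, where $(A,\mathfrak J)$ is either $({\K}(V), J(y))$ or $({\K}, J(g))$. Indeed, for the canonical epimorphism $\varphi_A\colon A[x,\lambda,\theta]\to A[x,\lambda,\theta]/W_A$ with kernel $W_A$, the preimage of $\varphi_A(\mathfrak J)$ is exactly $\mathfrak J + \ker\varphi_A = \mathfrak J + W_A$; this is a completely general fact about ring epimorphisms and ideals. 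Intersecting both sides with $S_A\subset A[x,\lambda,\theta]$, and using that $\varphi_A$ restricted to $S_A$ is injective (as noted in the text, since $W_A\cap S_A=\{0\}$) so that $\varphi_A(\mathfrak J)\cap S_A$ corresponds under $\varphi_A|_{S_A}$ to $\varphi_A^{-1}(\varphi_A(\mathfrak J))\cap S_A$, we get
\[
(W_A+\mathfrak J)\cap S_A \;=\; \varphi_A^{-1}(\varphi_A(\mathfrak J))\cap S_A \;=\; \varphi_A(\mathfrak J)\cap \varphi_A(S_A),
\]
and under the identification of $S_A$ with $\varphi_A(S_A)$ the right-hand side is $\tilde{J}(y)$ (resp. $\tilde{J}(g)$) by Notation \ref{tilddefn}. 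So the argument is essentially bookkeeping about the correspondence between ideals containing $W_A$ upstairs and ideals of the quotient downstairs.

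Concretely, I would write it as follows. Fix $A$ and $\mathfrak J$ as above and let $q\colon A[x,\lambda,\theta]\to A[x,\lambda,\theta]/W_A$ be the quotient map. The identity $q^{-1}(q(\mathfrak J))=\mathfrak J+W_A$ holds because $q^{-1}(q(\mathfrak J))$ is the smallest ideal containing both $\mathfrak J$ and $\ker q = W_A$, and this is $\mathfrak J+W_A$. Now take any $f\in S_A$. Then $f\in (W_A+\mathfrak J)\cap S_A$ iff $q(f)\in q(\mathfrak J)$; since $f\in S_A$, this says precisely $q(f)\in q(\mathfrak J)\cap q(S_A)$, because $q(S_A)$ certainly contains $q(f)$. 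Conversely any element of $q(\mathfrak J)\cap q(S_A)$ is $q(f)$ for a unique $f\in S_A$ (uniqueness by injectivity of $q|_{S_A}$), and such $f$ satisfies $f\in \mathfrak J+W_A$. Thus $q|_{S_A}$ carries $(W_A+\mathfrak J)\cap S_A$ bijectively onto $q(\mathfrak J)\cap q(S_A)=\varphi_A(\mathfrak J)\cap \varphi_A(S_A)$, which is by definition $\tilde{J}(y)$ or $\tilde{J}(g)$ after the identification $S_A\cong\varphi_A(S_A)$. Specializing $A={\K}(V)$, $\mathfrak J=J(y)$ gives the first chain of equalities and $A={\K}$, $\mathfrak J=J(g)$ the second.

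There is essentially no main obstacle here — the statement is a formal consequence of the first isomorphism theorem together with the already-recorded fact that $W_A\cap S_A=\{0\}$. The only point requiring a sentence of care is the identification of $S_A$ with its image $\varphi_A(S_A)$ in $A[x,\lambda,\lambda^{-1}]$, and that $\varphi_A(J(y))\cap S_{\K(V)}$ in the sense of Notation \ref{tilddefn} really means the intersection taken inside $A[x,\lambda,\theta]/W_A\cong A[x,\lambda,\lambda^{-1}]$ with the image of $S_A$; both of these are exactly the conventions fixed in the paragraph immediately preceding Lemma \ref{jtildeprime} and in Notation \ref{tilddefn}, so no new work is needed. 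I would therefore keep the proof to a short paragraph invoking the universal property of the quotient and the injectivity of $\varphi_A|_{S_A}$, without any computation in the polynomial rings themselves.
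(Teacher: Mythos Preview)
Your argument is correct. The identity $\varphi_A^{-1}(\varphi_A(\mathfrak J))=\mathfrak J+W_A$ is indeed a general fact about ring epimorphisms, and together with the injectivity of $\varphi_A|_{S_A}$ (equivalently $W_A\cap S_A=\{0\}$) it immediately yields $(W_A+\mathfrak J)\cap S_A=\varphi_A(\mathfrak J)\cap\varphi_A(S_A)$ under the stated identification. Nothing more is needed.

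By contrast, the paper proves the two inclusions separately and by explicit computation. The inclusion $(W+J)\cap S\subset\varphi(J)\cap S$ is handled as you would: write $h=h_W+h_J$ and apply $\varphi$. For the reverse inclusion, however, instead of simply observing that $\varphi(b)=h$ forces $h-b\in\ker\varphi=W$ and hence $h=b+(h-b)\in J+W$, the paper takes the preimage $b\in J$ and then explicitly modifies $b$ by elements of $W$ to strip off the $\lambda$- and $\theta$-dependence: first it groups terms of $b$ by the value of $\nu_1-\nu_2$ (the $\lambda$-exponent minus the $\theta$-exponent), discards those with $\nu_1-\nu_2\neq 0$ as lying in $W$, and then uses the relations $(\lambda_j\theta_j)^d-1\in W$ to kill the remaining $\lambda_j\theta_j$ factors term by term. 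This produces a concrete $\tilde b\in(J+W)\cap S$ with $\varphi(\tilde b)=h$, and injectivity of $\varphi|_S$ gives $\tilde b=h$.

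Your abstract route is shorter and makes transparent that the lemma is purely formal, depending on no feature of $J(y)$, $J(g)$, or the specific generators of $W_A$ beyond $W_A\cap S_A=\{0\}$. The paper's hands-on construction, while longer than necessary, has the minor virtue of exhibiting an explicit lift in $J+W$, which could be useful if one wanted an algorithmic description; but for the purposes of the lemma your argument is entirely sufficient and arguably preferable.
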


\begin{proof}
Since the proof does not depend on the chosen field ${\K}(V)$ or ${\K}$, it
suffices to show the first statement. The second one is proved in
exactly the same way. For simplicity we denote $\varphi_{{\K}(V)}$ by $\varphi$ and write $J$ for $J(y)$ as well as $W$ for $W_{{\K}(V)}$ and $S$ for $S_{{\K}(V)}$ in this proof.

Let $h\in(W+J)\cap S$. Then $\varphi(h)=h$, since $h\in S$ is
independent of $\lambda$ and $\theta$. On the other hand we can write
$h=h_W+h_J$, where $h_W\in W$ and $h_J\in J$. Then we have
$$\varphi(h)=\varphi(h_W)+\varphi(h_J)=0+\varphi(h_J)\in
\varphi(J).$$ Hence, $h\in \varphi(J)\cap S$.

For the other inclusion let $h\in \varphi(J)\cap S$. Since
$h\in \varphi(J)$, there exists $b\in J$ with $\varphi(b)=h$.
The aim is to construct an element $\tilde{b}\in (W+J)\cap S$ by adding a suitable element of $W$ to $b$. Then we know
$\varphi(\tilde{b})=\varphi(b)=h$, and both $\tilde{b}$ and $h$ are
independent of $\lambda$ and $\theta$, which implies $\tilde{b}=h$. To
find $\tilde{b}$ write $b=\sum_{(\nu_1,\nu_2)\in \N^{2l}}
c_{\nu}(x,y) \lambda^{\nu_1}\theta^{\nu_2}$ as a polynomial in the
$\lambda$ and $\theta$. Then we have $$h=\varphi(b)=\sum_{a\in \Z^{2l}}
\left(\sum_{\nu_1-\nu_2=a} c_{\nu}(x,y)\right) \lambda^a\in S.$$ If $a\neq 0$,
then $(\sum_{\nu_1-\nu_2=a} c_{\nu}(x,y))=0$. Writing
$$b=\sum_{\nu_1-\nu_2\neq a}
c_{\nu}(x,y)\lambda^{\nu_1}\theta^{\nu_2}+\sum_{\nu_1-\nu_2=a}
c_{\nu}(x,y)\lambda^{\nu_1}\theta^{\nu_2}$$ the second part must be
contained in $W$, since it maps to $0$ under $\varphi$. So we can drop
the second part from $b$ and this still maps to $h$ under $\varphi$
and is an element from $W+J$. Without loss of generality it can thus
be assumed that $b$ is a polynomial in $x_1,\ldots,x_n$ and
$\lambda_1\theta_1,\ldots,\lambda_l\theta_l$, since it only contains terms
$c_{\nu}(x,y)\lambda^{\nu_1}\theta^{\nu_2}$ with $\nu_1=\nu_2$.

To eliminate the $\lambda_j\theta_j$ from $b$ observe that $(\lambda_j\theta_j)^{d}-1\in W$ for every $d>0$. Indeed $$(\lambda_j\theta_j)^{d}-1=\left(\sum_{s=0}^{{d}-1}
(\lambda_j\theta_j)^s\right)(\lambda_j\theta_j-1).$$ For every term
$c(x,y)(\lambda_j\theta_j)^{t_j}\prod_{i\neq j}(\lambda_i\theta_i)^{t_i}$ we can subtract
$$c(x,y)
\left((\lambda_j\theta_j)^{t_j}-1\right) \prod_{i\neq j}(\lambda_i\theta_i)^{t_i}\in
W$$ from $b$ and thus eliminate the variable $\lambda_j\theta_j$ from
this term. Doing this inductively for all $\lambda_j\theta_j$,
$j=1,\ldots,l$ in all terms of $b$ we obtain the expression $\tilde{b}=b+b_W\in S$,
for an element $b_W\in W$. Hence, $\tilde{b}\in (J+W)\cap S$ proving the claim.
\end{proof}

By elimination we can now find a Gr\"obner basis for $\tilde{J}(y)\subset S_{{\K}(V)}$, such that if we substitute $g\in U_{\Gc}$ from Lemma \ref{groebbigideals}  we obtain a Gr\"obner basis of $\tilde{J}(g)\subset S$. Let $\Gc$ be the reduced Gr\"obner basis of $J(y)+W_{{\K}(V)}\subset
{\K}(V)[x,\lambda,\theta]$ with respect to the lexicographic term order $\succ$ and let $\Gc(g)$ the reduced Gr\"obner bases of
$J(g)+W_{{\K}}\subset {\K}[x,\lambda,\theta]$ with respect to $\succ$ for all $g\in U_{\Gc}$; all as in Lemma \ref{groebbigideals}. 

\begin{cor}\label{groebsmallideal}
With the above notation we have:
\begin{enumerate}
\item $\tilde{\Gc}=\Gc\cap S_{{\K}(V)}$ is a Gr\"obner basis of $\tilde{J}(y)\subset S_{{\K}(V)}$ with respect to
$\succ$.
\item $\tilde{\Gc}(g)=\Gc(g)\cap S_{\K}$ is a Gr\"obner basis of $\tilde{J}(g)\subset S_{\K}$ with respect to $\succ$ for all $g\in
U_{\Gc}$.
\item The set $\tilde{\Gc}$ and all the sets $\tilde{\Gc}(g)$ for $g\in U_{\Gc}$ have the same
support.
\end{enumerate}
\end{cor}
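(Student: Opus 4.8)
The plan is to derive this directly from the elimination property of Gr\"obner bases, combined with Lemmas \ref{groebbigideals} and \ref{sameideal}. Recall that, by construction, $\succ$ is an elimination order for the variables $\lambda_1,\ldots,\lambda_l,\theta_1,\ldots,\theta_l$: any monomial in which one of these variables occurs is $\succ$-larger than every monomial involving only $x_1,\ldots,x_n$. The standard fact I would invoke is that if $G$ is a Gr\"obner basis of an ideal $\mathfrak a\subset A[x,\lambda,\theta]$ with respect to such an elimination order, then $G\cap A[x_1,\ldots,x_n]$ is a Gr\"obner basis of the elimination ideal $\mathfrak a\cap A[x_1,\ldots,x_n]$; this holds for $A={\K}$ and $A={\K}(V)$ alike, and requires only that $G$ be a Gr\"obner basis, not that it be reduced (see \cite{E}).

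First I would apply this to $G=\Gc$ and $\mathfrak a=J(y)+W_{{\K}(V)}$: by Lemma \ref{groebbigideals}(i) the set $\Gc$ is a Gr\"obner basis of $J(y)+W_{{\K}(V)}$ with respect to $\succ$, so $\tilde{\Gc}=\Gc\cap S_{{\K}(V)}$ is a Gr\"obner basis of $(J(y)+W_{{\K}(V)})\cap S_{{\K}(V)}$. By Lemma \ref{sameideal} this ideal is exactly $\tilde{J}(y)$, which proves (i). Running the identical argument with $G=\Gc(g)$ and $\mathfrak a=J(g)+W_{{\K}}$ for $g\in U_{\Gc}$ --- a Gr\"obner basis with respect to $\succ$ by Lemma \ref{groebbigideals}(ii) --- shows that $\tilde{\Gc}(g)=\Gc(g)\cap S_{\K}$ is a Gr\"obner basis of $(J(g)+W_{{\K}})\cap S_{\K}$, which equals $\tilde{J}(g)$ again by Lemma \ref{sameideal}; this gives (ii).

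For (iii), write $\Gc=\left\{h_1(y),\ldots,h_s(y)\right\}$ and $\Gc(g)=\left\{h_1(g),\ldots,h_s(g)\right\}$ as in Lemma \ref{groebbigideals}, so that $h_i(y)$ and $h_i(g)$ have the same support for every $i$ and every $g\in U_{\Gc}$. Since the support records exactly which monomials occur, $h_i(y)$ involves none of the $\lambda_j,\theta_j$ if and only if $h_i(g)$ does, i.e.\ $h_i(y)\in S_{{\K}(V)}$ precisely when $h_i(g)\in S_{\K}$. Hence forming $\tilde{\Gc}$ from $\Gc$ and $\tilde{\Gc}(g)$ from $\Gc(g)$ selects the same indices $i$, and the retained polynomials still have the same support, which is (iii). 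The argument is thus a formal concatenation of the two preceding lemmas; the only point requiring a little care is to apply the elimination theorem in the right form --- for an arbitrary Gr\"obner basis with respect to an elimination order, and over the subring $A[x_1,\ldots,x_n]\subset A[x,\lambda,\theta]$ taken \emph{before} passing to the quotient by $W_A$, which is precisely the shape in which Lemma \ref{sameideal} is phrased --- so I do not anticipate a genuine obstacle.
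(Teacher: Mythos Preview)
Your argument is correct and follows exactly the paper's approach: apply the elimination property of Gr\"obner bases (as in \cite[Proposition 15.29]{E}) to obtain Gr\"obner bases of $(W_{{\K}(V)}+J(y))\cap S_{{\K}(V)}$ and $(W_{{\K}}+J(g))\cap S_{\K}$, then invoke Lemma \ref{sameideal} to identify these with $\tilde{J}(y)$ and $\tilde{J}(g)$, and deduce (iii) from Lemma \ref{groebbigideals}(iii). Your treatment of (iii) is somewhat more explicit than the paper's one-line reference, but the substance is the same.
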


\begin{proof}
By elimination \cite[Proposition 15.29]{E} $\tilde{\Gc}$ and $\tilde{\Gc}(g)$
are Gr\"obner bases of $(W_{{\K}(V)}+J(y))\cap S_{{\K}(V)}$ and $(W_{{\K}}+J(g))\cap S_{\K}$, respectively. By Lemma \ref{sameideal} we know that $(W_{{\K}(V)}+J(y))\cap
S_{{\K}(V)}=\tilde{J}(y)$ and $(W_{{\K}}+J(g))\cap S_{\K}=\tilde{J}(g)$. Finally, (iii)
follows from Lemma \ref{groebbigideals} (iii).
\end{proof}

In particular, we have found a generating system $\tilde{\Gc}$ of
$\tilde{J}(y)$ and a non-empty open subset of
$V$, such that if we substitute the residue classes of the ${y_{ij}}$ modulo the defining ideal of $V$
by $g_{ij}$ in this set, we obtain a generating system of the ideal
$\tilde{J}(g)$. Moreover, we get the following simple corollary on the Krull dimensions of projection ideals, which will be useful later.

\begin{cor}\label{liftdim}
For all $g\in U_{\Gc}$ as defined in Lemma \ref{groebbigideals} we have $$\dim (S_{\K}/\tilde{J}(g))= \dim (S_{{\K}(V)}/\tilde{J}(y))$$ for the ideals $\tilde{J}(g)\subset S_{\K}$ and $\tilde{J}(y)\subset S_{{\K}(V)}$.
\end{cor}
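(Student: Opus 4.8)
The plan is to identify the initial ideals of $\tilde{J}(y)$ and $\tilde{J}(g)$ with respect to the term order $\succ$ and then to combine two standard facts: passing to the initial ideal preserves Krull dimension, and the dimension of a quotient of a polynomial ring by a fixed monomial ideal does not depend on the ground field. So $\dim(S_{\K}/\tilde{J}(g))$ and $\dim(S_{{\K}(V)}/\tilde{J}(y))$ will both be read off from the same combinatorial datum.

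First I would invoke Corollary \ref{groebsmallideal}: for $g\in U_{\Gc}$ it provides a Gr\"obner basis $\tilde{\Gc}$ of $\tilde{J}(y)\subset S_{{\K}(V)}$ with respect to (the restriction to the $x$-variables of) the lexicographic order $\succ$, a Gr\"obner basis $\tilde{\Gc}(g)$ of $\tilde{J}(g)\subset S_{\K}$ with respect to $\succ$, and --- crucially --- the fact that $\tilde{\Gc}$ and all $\tilde{\Gc}(g)$ have the same support. Since the $\succ$-leading term of a polynomial is simply the $\succ$-largest monomial occurring in it, each member of $\tilde{\Gc}$ and its counterpart in $\tilde{\Gc}(g)$ then have identical leading monomials, so the initial ideals $\inom_{\succ}(\tilde{J}(y))\subset S_{{\K}(V)}$ and $\inom_{\succ}(\tilde{J}(g))\subset S_{\K}$ are generated by one and the same finite set of monomials in $x_1,\ldots,x_n$.

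It then remains to chain standard equalities. Since $\tilde{J}(y)$ and $\tilde{J}(g)$ are graded (in the standard grading), the quotient by a graded ideal and the quotient by its initial ideal for the term order $\succ$ have the same Hilbert function (the fact cited via \cite[Theorem 15.26]{E} in the proof of Theorem \ref{gengrobcomp}), so $\dim(S_{\K}/\tilde{J}(g))=\dim(S_{\K}/\inom_{\succ}(\tilde{J}(g)))$ and likewise over ${\K}(V)$. On the other hand, the quotient of a polynomial ring by the monomial ideal generated by a prescribed set of monomials has a Hilbert function, and hence a Krull dimension, that is independent of the base field, because in each degree the monomials not divisible by any generator form a vector-space basis (equivalently, one may pass to the radical and compute the dimension from the associated simplicial complex). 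Combining these gives $\dim(S_{\K}/\tilde{J}(g))=\dim(S_{{\K}(V)}/\tilde{J}(y))$. I do not expect a genuine obstacle here: essentially all the work has already been done in the equal-support statement of Corollary \ref{groebsmallideal}, and the only point requiring a little care is to keep track that the two initial ideals, although literally living in polynomial rings over different fields, are generated by exactly the same monomials, so that the count of standard monomials is identical on both sides.
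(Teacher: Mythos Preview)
Your proposal is correct and follows exactly the paper's own proof: invoke Corollary \ref{groebsmallideal} to get Gr\"obner bases with the same support, conclude that $\inom_{\succ}(\tilde{J}(g))$ and $\inom_{\succ}(\tilde{J}(y))$ are generated by the same monomials, and then use that passing to the initial ideal preserves Krull dimension and that the dimension of a monomial quotient is independent of the ground field. The paper's version is simply a terser rendering of the same chain of equalities.
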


\begin{proof}
We have Gr\"obner bases $\tilde{\Gc}(g)$ of $\tilde{J}(g)$ and
$\tilde{\Gc}$ of $\tilde{J}(y)$ with respect to some term order $\succ$ with the same support as stated in
Corollary \ref{groebsmallideal}. This implies that
$\inom_{\succ}(\tilde{J}(g))\subset S_{\K}$ and
$\inom_{\succ}(\tilde{J}(y))\subset S_{{\K}(V)}$ are generated by the same
monomials. Since the dimension of monomial ideals does not depend on
the ground field, we thus have $$\dim (S_{\K}/\tilde{J}(g))=\dim (S_{\K}/\inom_{\succ} (\tilde{J}(g)))=\dim (S_{{\K}(V)}/\inom_{\succ} (\tilde{J}(y)))= \dim (S_{{\K}(V)}/\tilde{J}(y)).$$
\end{proof}

\section{Rational Projections}\label{ratproj}

The main tool to express a tropical variety as an intersection of tropical hypersurfaces as done in \cite{HETH} are certain linear projections
. The idea is to first project a tropical variety, such that the dimension of the ambient space is as small as possible, but no information on the structure of the tropical variety is lost. For an $m$-dimensional tropical variety in $\R^n$ it turns out, that a linear map from $\R^n$ to $\R^{m+1}$ can be used for this. As the kernel of such a map is $(n-m-1)$-dimensional, the inverse image of an $m$-dimensional tropical variety is a finite set of polyhedra of dimension $n-1$ in $\R^n$. In \cite{HETH} it is shown that this inverse image is a tropical hypersurface. We then need to recover the original tropical variety from tropical hypersurfaces obtained by projections as described above. This is solved in \cite{HETH} by applying a theorem of Bieri and Groves \cite[Theorem 4.4]{BIGR}. 

To proceed in the same way as done in \cite{HETH} we need a version of \cite[Theorem 4.4]{BIGR} for finitely many subsets of $\R^n$ instead of only one.

\begin{defn}
Let $m<n$ be positive integers and
\begin{eqnarray*}
\pi: \R^n & \longrightarrow & \R^{m+1}\\
x         & \longmapsto     & Ax
\end{eqnarray*}
be a linear map with rational matrix $A$ of maximal possible rank.
Such a map will simply be called a \emph{rational projection}. Let
$\Pi$ be the set of equivalence classes of all rational projections
with respect to the equivalence relation $$\pi \sim \pi'
\Longleftrightarrow \ker \pi=\ker \pi'.$$ A vector subspace of
$\R^n$ will be called \emph{rational}, if it has a basis of rational
vectors.
\end{defn}

Note that $\ker \pi$ is rational for $\pi\in \Pi$. Moreover, $\Pi$ can be identified with $$\left\{U\subset \R^n: U
\text{ is a rational vector subspace of $\R^n$, }  \dim U=n-m-1\right\}.$$ As in \cite[Section 4.1]{BIGR} the topology on $\Pi$ will be the one induced by the Zariski topology of the Grassmannian $\Gr_{\R}(n-m-1,n)$. Thus $\Pi$ is a dense subset of $\Gr_{\R}(n-m-1,n)$ consisting of all rational vector subspaces of $\Gr_{\R}(n-m-1,n)$. 

Since every open subset of $\Pi$ is by definition the intersection of an open subset of $\Pi$ with $\Gr_{\R}(n-m-1,n)$ and $\Pi$ is dense in $\Gr_{\R}(n-m-1,n)$, it follows that every non-empty open set in $\Pi$ is dense. Note that \cite[Lemma 4.2, Lemma 4.3 and Theorem 4.4]{BIGR}, which consider the set of all (not necessarily rational) projections hold for $\Pi$ with the above topology as well. In particular, all of the following statements are well-defined on the equivalence classes of $\Pi$, although they concern representatives of these.

One necessary condition to be able to recover a tropical variety from its image under a rational projection $\pi$ is that $\pi$ preserves the dimension of all polyhedra in the tropical variety. This is true for almost all rational projections as will be the content of the next statement. It is a direct application of \cite[Lemma 4.2]{BIGR}, see also \cite[Section 5]{PA} for a related result.

\begin{lemma}[\cite{BIGR}]\label{dense}
Let $m<n$ and $\Dc=\left\{P_1,\ldots,P_t\right\}$ be a finite collection of $m$-dimensional polyhedra in $\R^n$. Then the set of all rational projections $\pi: \R^n\longrightarrow \R^{m+1}$, such that $\dim\pi(P_i)=m$ for $i=1,\ldots,t$, contains an open and dense subset $\tilde{D}\subset \Pi$ in the set of all rational projections.
\end{lemma}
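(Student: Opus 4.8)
The statement we need is a direct consequence of \cite[Lemma 4.2]{BIGR}, applied $t$ times and combined with the observation that in $\Pi$ a finite intersection of open dense sets is again open and dense. The strategy is therefore: (i) recall what \cite[Lemma 4.2]{BIGR} gives for a single polyhedron; (ii) apply it to each $P_i$ separately; (iii) intersect the resulting open dense subsets; (iv) check that the intersection is still open and dense in $\Pi$.

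\textbf{Step 1: the one-polyhedron case.} By \cite[Lemma 4.2]{BIGR}, for a fixed $m$-dimensional polyhedron $P_i\subset\R^n$ the set
$$\tilde D_i=\left\{\pi\in\Pi:\dim\pi(P_i)=m\right\}$$
contains an open dense subset of $\Pi$. Concretely, $\dim\pi(P_i)<m$ forces the kernel of $\pi$ to meet the $m$-dimensional linear space spanned by $P_i$ (after translation to the origin) in dimension at least $n-m$, rather than the generic value $n-m-1$; this is a proper Zariski-closed condition on the point $\ker\pi\in\Gr_{\R}(n-m-1,n)$, so its complement is open and, since $\Pi$ is dense in the Grassmannian, dense in $\Pi$. (Strictly, one passes to the rational points, but as noted in the excerpt every non-empty open subset of $\Pi$ is dense in $\Pi$, so this causes no difficulty.)

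\textbf{Step 2: finite intersection.} Set $\tilde D=\bigcap_{i=1}^t \tilde D_i$. For $\pi\in\tilde D$ we have $\dim\pi(P_i)=m$ for every $i=1,\ldots,t$, which is exactly the asserted property. It remains to see that $\tilde D$ contains an open dense subset of $\Pi$. Each $\tilde D_i$ contains an open subset $O_i\subset\Pi$ which is dense; then $O:=\bigcap_{i=1}^t O_i$ is open, being a finite intersection of open sets. Since $\Pi$ carries the subspace topology from the irreducible variety $\Gr_{\R}(n-m-1,n)$, any two non-empty open subsets meet, and more generally a finite intersection of non-empty open subsets is non-empty; hence $O$ is non-empty, and as remarked in the excerpt every non-empty open subset of $\Pi$ is dense. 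Thus $O\subset\tilde D$ is open and dense in $\Pi$, which is what the lemma claims.

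\textbf{Main obstacle.} There is essentially no obstacle beyond bookkeeping: the whole content is in \cite[Lemma 4.2]{BIGR}, and the only thing to be careful about is that the topology and density statements are being used for the set $\Pi$ of \emph{rational} projections rather than for the full Grassmannian, but the excerpt has already recorded that non-empty open subsets of $\Pi$ are automatically dense and that \cite[Lemma 4.2]{BIGR} carries over to $\Pi$ verbatim. So the proof is short: invoke \cite[Lemma 4.2]{BIGR} for each $P_i$, intersect the $t$ open dense sets, and note that a finite intersection of open dense subsets of $\Pi$ is open dense.
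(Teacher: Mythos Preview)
Your proposal is correct and matches the paper's approach exactly: the paper does not give a separate proof but simply records that the lemma ``is a direct application of \cite[Lemma 4.2]{BIGR}'', which is precisely what you spell out by applying that lemma to each $P_i$ and intersecting the resulting open dense subsets of $\Pi$.
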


The key to recover tropical varieties from their images under rational projections is \cite[Theorem 4.4]{BIGR}. In \cite{HETH} this theorem is directly applied to recover a single tropical variety. To handle the generic case it is necessary to be able to apply \cite[Theorem 4.4]{BIGR} to all possible tropical varieties under a generic coordinate change. However, there is only a finite number of possibilities of what the tropical variety of an ideal can be generically, see Corollary \ref{finiteoptions}. This hints at the necessity of the following version of \cite[Theorem 4.4]{BIGR} for our purposes, which can be proven in the same fashion as the original theorem.

\begin{theorem}[\cite{BIGR}]\label{bierigrovesorig}
Let $A_1,\ldots,A_t\subset \R^n$ be arbitrary subsets and assume that there exists a dense set $D'\subset \Pi$ of rational
projections $\pi: \R^n\longrightarrow \R^{m+1}$, such that $\pi(A_j)$ is a finite union of polyhedra of dimension less than or equal to $m$ for every $j\in \left\{1,\ldots,t\right\}$. Then there exist $\pi_0,\ldots,\pi_n\in D'$, such that for every $j$ we have $$A_j=\bigcap_{i=0}^n \pi_i^{-1} \pi_i(A_j).$$
\end{theorem}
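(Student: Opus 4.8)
The plan is to mimic the proof of the original Bieri–Groves theorem \cite[Theorem 4.4]{BIGR}, showing that the argument goes through essentially verbatim when one has finitely many sets $A_1,\dots,A_t$ instead of a single set, because the key topological input is that finite intersections of dense open subsets of $\Pi$ are again dense open. First I would recall the mechanism of the original proof: one shows that for a suitable generic choice of projections $\pi_0,\dots,\pi_n$, the set $\bigcap_{i=0}^n \pi_i^{-1}\pi_i(A)$ cannot be strictly larger than $A$; the point $n+1$ is forced by a dimension count, since a point $x\notin A$ can be separated from $A$ by some $\pi^{-1}\pi(A)$ for $\pi$ in a dense subset, and iterating this separation $n+1$ times (using that each new projection can be chosen generically to ``cut down'' the excess) exhausts all directions in $\R^n$.

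The key steps, in order, are as follows. Step one: for each fixed $j$, the hypothesis gives the dense set $D'\subset\Pi$ along which $\pi(A_j)$ is a finite union of polyhedra of dimension $\leq m$; this is already uniform in $j$ by assumption, so no intersection over $j$ is needed for the hypothesis itself. Step two: reproduce the inductive separation argument of \cite{BIGR} for a single set $A_j$, which produces, for that $j$, a dense open subset $D_j'\subset D'$ of tuples $(\pi_0,\dots,\pi_n)\in D'^{\,n+1}$ for which $A_j=\bigcap_{i=0}^n\pi_i^{-1}\pi_i(A_j)$ — here one must check that the ``bad'' set of tuples for which the equality fails is contained in a proper Zariski-closed subset of $(\Gr_\R(n-m-1,n))^{n+1}$, equivalently that its complement is dense open in $\Pi^{n+1}$. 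Step three: intersect the finitely many dense open sets $D_1',\dots,D_t'$ inside $\Pi^{n+1}$; since $\Pi$ carries the topology induced from the Grassmannian in which it is dense and irreducible, $\Pi^{n+1}$ is irreducible, so the intersection $\bigcap_{j=1}^t D_j'$ is again non-empty (in fact dense open). Any tuple $(\pi_0,\dots,\pi_n)$ in this intersection simultaneously satisfies $A_j=\bigcap_{i=0}^n\pi_i^{-1}\pi_i(A_j)$ for all $j$, which is the claim.

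The main obstacle I anticipate is Step two: making precise, in a way faithful to \cite{BIGR}, that for a single subset $A\subset\R^n$ whose generic projections are finite unions of polyhedra of dimension $\leq m$, the set of good $(n+1)$-tuples of projections is dense open in $\Pi^{n+1}$ (rather than merely non-empty). The original theorem is typically stated as an existence statement, so one has to revisit its proof and extract the openness/genericity of the conclusion in the space of tuples — concretely, that each separation step in the induction can be performed on a dense open set of choices of the next projection, and that finitely many such conditions combine to a dense open locus. Once this refined form of the single-set statement is in hand, passing to finitely many sets is immediate by the Baire-type observation that a finite intersection of dense open subsets of the irreducible space $\Pi^{n+1}$ is dense open, hence non-empty. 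I would also remark that, as noted in the excerpt, \cite[Lemma 4.2, Lemma 4.3, Theorem 4.4]{BIGR} are already known to hold with $\Pi$ in place of the full space of projections, so no separate verification of the rationality restriction is needed here.
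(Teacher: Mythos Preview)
The paper does not actually give a proof of this theorem; it only remarks that it ``can be proven in the same fashion as the original theorem'' of Bieri--Groves. The intended reading of ``same fashion'' is to run the inductive construction of \cite[Theorem~4.4]{BIGR} unchanged, and at each step where a single generic projection $\pi_{k+1}\in D'$ is chosen, simply observe that one now has $t$ open dense conditions on $\pi_{k+1}$ (one for each $A_j$) instead of one; their intersection is still open and dense in $\Pi$, so the induction proceeds and after $n+1$ steps the common choice $\pi_0,\dots,\pi_n$ works for all $A_j$ simultaneously.

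Your route is organized differently: you want to first upgrade the single-set Bieri--Groves statement to the assertion that the good tuples for a fixed $A_j$ form a dense open subset of $\Pi^{n+1}$, and only then intersect over $j$. This can be made to work, but the obstacle you flag in Step~2 is real and is precisely what the direct approach avoids: the Bieri--Groves induction produces, at step $k+1$, an open dense subset of $\Pi$ that \emph{depends on} $\pi_0,\dots,\pi_k$, so passing to a dense open locus in the product $\Pi^{n+1}$ requires an additional constructibility/fiber argument that is nowhere needed if you intersect the $t$ conditions step by step inside $\Pi$. In short, your plan is not wrong, but it manufactures a difficulty that the paper's ``same fashion'' reading sidesteps entirely; reorganize the intersection to happen at each inductive step and the proof becomes a one-line modification of \cite[Theorem~4.4]{BIGR}.
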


\section{Generic Tropical Varieties}\label{generictropicalvarieties}

In this section the existence of the generic tropical variety for a graded ideal $I$ on a subvariety $V$ of $\GL_n({\kx})$ will be established. We first prove this for graded prime ideals using the methods in \cite[Section 3]{HETH}, and then generalize this to arbitrary graded ideals. The idea will be to construct finitely many polynomials $F_i(g)\in g(I)$ with constant tropical variety on a Zariski-open subset of $\emptyset\neq V\subset \GL_n({\kx})$ for which $T(g(I))=\bigcap_{i}T(F_i(g))$ for $g\in V$.

This amounts to giving a generic version of \cite[Corollary 3.6]{HETH} explained below. To do this we need the ideals which are associated to $I$ in \cite{HETH} to describe the tropical variety of $I$ as an intersection of tropical hypersurfaces. We must deal with these ideals corresponding to $g(I)$ for all $g$ in some non-empty open subset of $V\subset\GL_n({\kx})$ simultaneously. The main technical treatment for this was done in Section \ref{techgensets}, where we obtained generating systems with the necessary properties for these ideals. 

Let ${\kx}\hookrightarrow {\K}$ be as in Notation \ref{kandK} and $I\subset S_L$ be a graded ideal. Let $\pi: \R^n  \to \R^{m+1}$ be a rational projection in $\Pi$ as in Section \ref{ratproj}. Fix a basis $u^{(1)},\ldots,u^{(l)}\in \Z^n$ for $l=n-(m+1)$ of $\ker \pi$. For $g\in \GL_n({\kx})$ we consider the ideal $$\tilde{J}(g)=\varphi_{\K}(J(g))\cap {\K}[x_1,\ldots,x_n]\subset {\K}[x_1,\ldots,x_n]$$ as in Notation \ref{tilddefn} a priori depending on the chosen basis $u^{(1)},\ldots,u^{(l)}$. If $g$ is the identity in $\GL_n({\kx})$, this is exactly the ideal $J\cap {\K}[x_1,\ldots,x_n]$ from \cite[Theorem 3.1]{HETH}, which provided the idea for the definition of its generic versions in Section \ref{techgensets}. 

We first cite an important result from \cite{HETH}, which will establish the connection between the ideals of Section \ref{techgensets} and the tropical variety of $I$. This allows us to express $T(I)$ as an intersection of tropical hypersurfaces.

\begin{prop}[{\cite[Corollary 3.6]{HETH}}]\label{hypecase}
Let $I\subset S_{\K}={\K}[x_1,\ldots,x_n]$ be a graded prime ideal with $\dim (S_{\K}/I)=m$. Then there exists a dense open subset $D\subset \Pi$, such that for $\pi\in D$ we have: If $\dim \pi(P)=m$ for every maximal polyhedron $P$ in $T(g(I))$, then $$T(\tilde{J}(g))=\pi^{-1} \pi(T(g(I)))$$ is a tropical hypersurface.
\end{prop}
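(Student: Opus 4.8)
The statement is essentially \cite[Corollary 3.6]{HETH} transported to the ideal $g(I)$ in place of $I$, so the plan is to verify that the hypotheses of the construction in \cite[Section 3]{HETH} are met for each $g\in\GL_n({\kx})$ and then invoke the cited result verbatim. First I would observe that by Remark \ref{primesinext} (via Proposition \ref{extkrempel}) the ideal $g(I)$ is again a graded prime ideal with $\dim(S_{\K}/g(I))=\dim(S_{\K}/I)=m$, since the coordinate change $g$ is an $\K$-algebra automorphism of $S_{\K}$. Thus $g(I)$ satisfies exactly the hypotheses placed on $I$ in \cite[Corollary 3.6]{HETH}, and the ideal $\tilde{J}(g)$ of Notation \ref{tilddefn} is, by the remark following that notation, precisely the ideal ``$J\cap{\K}[x_1,\ldots,x_n]$'' of \cite[Section 3]{HETH} built from $g(I)$ and the fixed basis $u^{(1)},\ldots,u^{(l)}$ of $\ker\pi$.

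Next I would recall the geometric content of \cite[Corollary 3.6]{HETH}: for a graded prime $P$ with $\dim(S_{\K}/P)=m$, there is a dense open $D_P\subset\Pi$ such that whenever $\pi\in D_P$ does not collapse the dimension of any maximal polyhedron of $T(P)$, the preimage $\pi^{-1}\pi(T(P))$ equals $T(\tilde{J})$ and is a tropical hypersurface (of pure dimension $n-1$, cut out by a single polynomial up to radical, reflecting $\dim(S_{\K}/\tilde{J})=n-1$). Applying this with $P=g(I)$ yields a dense open $D_g\subset\Pi$ with the asserted equality $T(\tilde{J}(g))=\pi^{-1}\pi(T(g(I)))$ for all $\pi\in D_g$ meeting the dimension condition. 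The only subtlety is uniformity: the statement asks for a single dense open $D\subset\Pi$ working for all $g$. Here I would invoke Corollary \ref{finiteoptions}: after shrinking to a non-empty Zariski-open $U\subset V$, the tropical variety $T(g(I))$ ranges over only finitely many polyhedral complexes as $g$ runs through $U$; moreover the underlying prime decomposition is governed by the finitely many extensions $y(P_i)$ of the minimal primes (Remark \ref{primesinext}), and since $I$ is prime there is really one relevant complex-type per outcome. Taking the (finite) intersection of the corresponding dense open sets $D_g$ — one for each of the finitely many values of $T(g(I))$ — produces a single dense open $D\subset\Pi$ that works simultaneously.

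The remaining point is to guarantee that the dimension hypothesis ``$\dim\pi(P)=m$ for every maximal polyhedron $P$ in $T(g(I))$'' can be met inside $D$; but this is exactly Lemma \ref{dense}, applied to the finite collection of maximal polyhedra occurring in the finitely many possible complexes $T(g(I))$ for $g\in U$ (each pure of dimension $m$ by Theorem \ref{tropbasics}(ii), using that $g(I)$ is prime and, after further shrinking $U$, contains no monomial — cf.\ Corollary \ref{genericsamedim}). Intersecting the resulting open dense set with $D$ keeps it open and dense in $\Pi$ since every non-empty open subset of $\Pi$ is dense. Then for $\pi$ in this final dense open set and $g\in U$, the hypotheses of \cite[Corollary 3.6]{HETH} applied to $g(I)$ are satisfied, giving $T(\tilde{J}(g))=\pi^{-1}\pi(T(g(I)))$ and that this set is a tropical hypersurface.

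\textbf{Main obstacle.} The genuinely new content over \cite{HETH} is the uniformity in $g$: one must make sure a \emph{single} dense open $D\subset\Pi$ and a single non-empty open $U\subset V$ serve all $g\in U$ at once. The key that makes this tractable is the finiteness furnished by Corollary \ref{finiteoptions} (only finitely many candidate complexes $T(g(I))$), together with the fact that finite intersections of dense open subsets of $\Pi$ remain dense open; without that finiteness one could not in general intersect the family $\{D_g\}_{g\in V}$ and stay open. Everything else is a faithful transcription of the arguments of \cite[Section 3]{HETH}, using that $\tilde{J}(g)$ has been set up in Section \ref{techgensets} precisely so as to specialize correctly from $\tilde{J}(y)$.
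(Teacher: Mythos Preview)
The paper does not give a proof of this proposition at all: it is introduced with ``We first cite an important result from \cite{HETH}\ldots'' and is simply \cite[Corollary~3.6]{HETH} applied verbatim to the ideal $g(I)$ in place of $I$. Since $g$ is an $\K$-algebra automorphism of $S_{\K}$, the ideal $g(I)$ is again graded prime with $\dim(S_{\K}/g(I))=m$, and by the remark after Notation~\ref{tilddefn} the ideal $\tilde J(g)$ is exactly the ideal $J\cap\K[x_1,\ldots,x_n]$ of \cite{HETH} built from $g(I)$; so the cited corollary applies for each fixed $g$ with no further work.

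Your proposal is correct, but you are proving more than the proposition asserts. The uniformity in $g$ that you identify as the ``main obstacle'' is not part of Proposition~\ref{hypecase}; the paper handles it separately in Remark~\ref{dimgen}, and indeed does so by precisely the mechanism you outline (use Corollary~\ref{finiteoptions} to reduce to finitely many possible complexes $T(g(I))$, then apply Lemma~\ref{dense} and intersect the resulting dense open sets in $\Pi$). In effect you have merged Proposition~\ref{hypecase} and Remark~\ref{dimgen} into a single argument. That is a perfectly reasonable way to organize the material, but as a proof of Proposition~\ref{hypecase} alone it overshoots: the proposition is a bare citation, and the one-line observation that $g(I)$ satisfies the hypotheses of \cite[Corollary~3.6]{HETH} is all that is required here.
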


First of all we assert that for prime ideals the condition in Proposition \ref{hypecase} on the dimension of the image of the maximal polyhedra of $T(I)$ under $\pi$ can be met generically.

\begin{rem}\label{dimgen}
Let $I\subset S_{\K}={\K}[x_1,\ldots,x_n]$ be a graded prime ideal with $\dim (S_{\K}/I)=m>0$ and $V\subset \GL_n({\kx})$ a subvariety. From Corollary \ref{finiteoptions} we know that there exists an open subset $\emptyset\neq U\subset V$, such that $T(g(I))$ is one of finitely many $m$-dimensional polyhedral complexes $\Fc_1,\ldots,\Fc_s$ for all $g\in U$. All these complexes are pure, as $I$ is prime. By Lemma \ref{dense} there exists an open and dense set $\tilde{D}\subset \Pi$, such that $\dim\pi(P)=m$ for every $m$-dimensional polyhedron $P$ in any of the $\Fc_k$ for every $\pi \in \tilde{D}$. As both $\tilde{D}$ and $D$ (from Proposition \ref{hypecase}) are open and dense in $\Pi$, so is $D'=\tilde{D}\cap D$. So for every $\pi\in D'$ and every $g\in U$ we have $\dim \pi(P)=m$ for every maximal polyhedron $P$ in $T(g(I))$.
\end{rem}

To handle the ideals $\tilde{J}(g)$ for all $g\in V$ simultaneously we have also constructed the ideal $\tilde{J}(y)\subset {\K}(V)[x_1,\ldots,x_n]$ in Notation \ref{tilddefn}. We will mainly need one important fact about all these ideals: There is a finite generating system of $\tilde{J}(y)$ and a non-empty open subset $U\subset V$, such that substituting the $g_{ij}$ for the variables $y_{ij}$ in the generators yields a finite generating set of $\tilde{J}(g)$ for every $g\in U$, all proved in Corollary \ref{groebsmallideal}.

In Theorem \ref{tropbasiselement}, which is the technical key statement in this section, we will show that the ideal $\tilde{J}(y)$ is principal and we want to substitute $g\in V$ into the given generator in ${\K}(V)[x_1,\ldots,x_n]$. For each $g$ which can be substituted this yields a polynomial in ${\K}[x_1,\ldots,x_n]$. The tropical hypersurfaces defined by these polynomials are generically all the same, as will be shown in the following lemma.

\begin{lemma}\label{trophypeconst}
Let $V\subset \GL_n({\kx})$ be a subvariety and $F(y)\in {\K}(V)[x_1,\ldots,x_n]$ be a homogeneous polynomial. Then there exists an open subset $\emptyset\neq U\subset V$, such that $T(F(g))$ is the same (possibly empty) polyhedral complex for all $g\in U$.
\end{lemma}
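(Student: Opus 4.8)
The plan is to reduce everything to controlling the valuations of the coefficients of $F(y)$ when specialized at points $g\in V$. Write $F(y)=\sum_{\nu} c_\nu(y)\, x^\nu$, where the sum is over the finitely many exponents $\nu\in\N^n$ occurring in $F$ and each coefficient $c_\nu(y)$ is an element of the field ${\K}(V)=Q({\K}[Y]/P{\K}[Y])$. By clearing denominators we may assume each $c_\nu(y)$ lies in the subring ${\K}[V]={\K}[Y]/P{\K}[Y]$ — this does not change $F(g)$ up to a common ${\K}^*$-scalar (once we restrict to a suitable open set where the common denominator does not vanish), and scaling a polynomial by a nonzero constant does not change its tropical hypersurface $T(F(g))$, since $\inom_\omega(cF)=\overline{c\,t^{-v(c)}}\cdot\inom_\omega(F)$ differs from $\inom_\omega(F)$ only by a nonzero constant in $K$, which does not affect whether it is a monomial.

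The key step is then exactly the same argument used in the proof of Lemma \ref{groebpoly}: via the canonical inclusion ${\K}(V)\hookrightarrow {\kx}(V)\left\{\!\!\left\{t\right\}\!\!\right\}$ of Corollary \ref{valfieldext}, view each $c_\nu(y)$ as a formal generalized power series in $t$ whose coefficients are polynomial expressions in the residue classes of the $y_{ij}$. For each $\nu$ with $c_\nu\ne 0$, the leading coefficient (the coefficient of $t^{v(c_\nu)}$) is a nonzero polynomial in the $y_{ij}$; let $U\subset V$ be the nonempty Zariski-open subset on which none of these finitely many leading coefficients vanishes and no common denominator cleared above vanishes. Then for all $g\in U$ we have $v(c_\nu(g))=v(c_\nu(y))$ for every $\nu$ (and $c_\nu(g)=0$ precisely when $c_\nu(y)=0$), and moreover $\overline{t^{-v(c_\nu(g))}c_\nu(g)}\in K^*$ is independent of $g$ up to... actually we only need that it is nonzero, which it is by construction.

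Finally I would unwind the definition of $\inom_\omega$ to conclude. For $\omega\in\R^n$ set $W(\omega)=\min_\nu\{v(c_\nu(g))+\omega\cdot\nu\}$; since the $v(c_\nu(g))$ are constant on $U$, both $W(\omega)$ and the set of indices attaining it are independent of $g\in U$. Then $\inom_\omega(F(g))=\sum_{\nu:\, v(c_\nu(g))+\omega\cdot\nu=W(\omega)} \overline{t^{-v(c_\nu(g))}c_\nu(g)}\, x^\nu$, and this is a monomial if and only if exactly one index $\nu$ attains the minimum — a condition on $\omega$ that does not depend on $g\in U$ (the nonzero scalars $\overline{t^{-v(c_\nu(g))}c_\nu(g)}$ cannot cancel, as there is at most one term). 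Hence the set $\{\omega:\inom_\omega(F(g)) \text{ is not a monomial}\}$ is the same for all $g\in U$; combined with the fact (Lemma \ref{groebpoly}, Lemma \ref{degcomplex}, or \cite{MAST}) that the Gr\"obner complex structure on this set is also constant on a possibly smaller open set, $T(F(g))$ is the same polyhedral complex for all $g$ in the resulting nonempty open subset of $V$. The only mild obstacle is the bookkeeping that clearing denominators and the constant-scaling do not affect the tropical hypersurface, but this is routine given the description of $\inom_\omega$ above.
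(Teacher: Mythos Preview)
Your proof is correct and follows essentially the same route as the paper's: write the coefficients of $F(y)$, pass to $K(V)\{\!\{t\}\!\}$ via Corollary~\ref{valfieldext}, and choose $U$ so that no denominator and no leading $t$-coefficient vanishes, making the valuations of all coefficients of $F(g)$ constant on $U$; the tropical hypersurface is then determined by this data. One small remark: your appeal to Lemma~\ref{groebpoly}/\ref{degcomplex} at the end is not literally applicable (those are stated for ideals of the form $g(I)$, not for $(F(g))$), but no further shrinking of $U$ is actually needed---for a principal ideal the Gr\"obner complex is the normal complex of the lifted Newton polytope and hence is already determined by the support and coefficient valuations you have fixed.
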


\begin{proof}
Let $F(y)=\sum_{\eta} (\frac{f_{\eta}}{h_{\eta}})x^{\eta}$, where $f_{\eta}$ and $h_{\eta}$ are elements of ${\K}[V]={\K}[Y]/P{\K}[Y]$ as used in Notation \ref{kv}. Recall that $f_{\eta}$ and $h_{\eta}$ define functions from $V$ to ${\K}$. Consider $F(y)$ as a polynomial in ${\kx}(V)\left\{\!\!\left\{t\right\}\!\!\right\}[x_1,\ldots,x_n]$ via the canonical inclusion, see Corollary \ref{valfieldext} and Notation \ref{kv}. Thus we can write $$F(y)=\sum_{\eta}\left(\sum_{\mu\in\R} \left(\frac{\sum_{\nu} a^{\eta}_{\nu\mu}y^{\nu}+P}{\sum_{\nu'}b^{\eta}_{\nu'\mu}y^{\nu'}+P}\right)t^{\mu}\right)x^{\eta},$$ where all $a^{\eta}_{\nu\mu}$ and $b^{\eta}_{\nu'\mu}$ are elements of ${\kx}$. 
For every $\eta$ let $$\mu^{\eta}_{\min}=v\left(\sum_{\mu\in\R} \left(\frac{\sum_{\nu} a^{\eta}_{\nu\mu}y^{\nu}+P}{\sum_{\nu'}b^{\eta}_{\nu'\mu}y^{\nu'}+P}\right)t^{\mu}\right)$$ be the valuation of the coefficient of $x^{\eta}$.

Choose $U\subset V$ to be a non-empty open subset, such that for $g\in U$ we have:
\begin{enumerate}
\item $h_{\eta}(g)\neq 0$ for every $\eta$ appearing. This ensures that we can substitute $g$ into $F(y)$.
\item $f_{\eta}(g)\neq 0$ for every $\eta$. Thus $F(g)$ is a polynomial in ${\K}[x_1,\ldots,x_n]$ with the same support for all $g\in U$.
\item $\sum_{\nu} a^{\eta}_{\nu\mu^{\eta}_{\min}}g^{\nu}\neq 0$ and $\sum_{\nu'}b^{\eta}_{\nu'\mu^{\eta}_{\min}}g^{\nu'}\neq0$. This guarantees that for a given $\eta$ and $\omega\in \R^n$ the expression $$v\left(\frac{f_{\eta}(g)}{h_{\eta}(g)}\right)+\eta\cdot \omega\in \R$$ is the same for every $g\in U$.
\end{enumerate}
As the tropical hypersurface of $F(g)$ depends exactly on this data, we have that $T(F(g))$ is the same polyhedral complex in $\R^n$ for all $g\in U$. This complex is empty, if and only if $F(y)$ is a monomial.
\end{proof}

With this we can now prove a general version of Proposition \ref{hypecase}, which will be the crucial step in the proof of the existence of generic tropical varieties.

\begin{theorem}\label{tropbasiselement}
Let $I\subset S_{\K}={\K}[x_1,\ldots,x_n]$ be a graded prime ideal with $\dim (S_{\K}/I)=m$, let $V\subset \GL_n({\kx})$ be a subvariety and $\pi:\R^n\longrightarrow \R^{m+1}$ a rational projection in $D'\subset \Pi$ as defined in Remark \ref{dimgen}. Then
\begin{enumerate}
\item either $T(g(I))=\emptyset$ for all $g\in V$
\item or there exists $F(y)\in y(I)\subset {\K}(V)[x_1,\ldots,x_n]$ and an open subset $\emptyset \neq U\subset V$, such that $T(F(g))$ is the same polyhedral complex for all $g\in U$ and the set $\pi^{-1}\pi(T(g(I)))$ is the (underlying set of) tropical hypersurface defined by $F(g)\in g(I)$.
\end{enumerate}
\end{theorem}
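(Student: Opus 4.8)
The plan is to exhibit $F(y)$ as a generator of the projection ideal $\tilde{J}(y)\subset S_{\K(V)}$ from Notation \ref{tilddefn}, once we know this ideal to be principal; Proposition \ref{hypecase} will then identify the tropical hypersurface of $F(g)$ with $\pi^{-1}\pi(T(g(I)))$. Alternative (i) takes care of the case $m=0$: then every $g(I)$ has the same Hilbert function as $I$ and hence Krull dimension $0$ (cf. Remark \ref{primesinext}), so it contains a monomial and $T(g(I))=\emptyset$ for every $g\in V$; by Proposition \ref{dimo} this is moreover the only situation in which (i) can occur. So from now on I assume $m>0$ and aim for (ii). The projection $\pi\in D'$ is given by hypothesis; by Remark \ref{dimgen} there is an open $\emptyset\neq U_0\subset V$ on which $\dim\pi(P)=m$ for every maximal polyhedron of $T(g(I))$, and since $m>0$ the complex $T(g(I))$ is nonempty there. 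Applying Proposition \ref{hypecase} to the graded prime ideals $g(I)$ (prime by Remark \ref{primesinext}, of dimension $m$), we get that for $g\in U_0$ the set $T(\tilde{J}(g))=\pi^{-1}\pi(T(g(I)))$ is a tropical hypersurface. Finally we intersect $U_0$ with the open set $U_{\Gc}$ of Corollary \ref{groebsmallideal}.

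The heart of the argument is that $\tilde{J}(y)\subset S_{\K(V)}$ is principal. By Lemma \ref{jtildeprime} it is prime (here the hypothesis that $I$ is prime enters), and by its construction it is a graded ideal: the generators $y(f)(\tau)$ of $J(y)$ are homogeneous in $x_1,\dots,x_n$ (with coefficients involving the $\lambda,\theta$), so $J(y)$ and hence $\tilde{J}(y)$ are graded, cf. \cite{HETH}. To determine its dimension, note that for $g\in U_0$ the prime $\tilde{J}(g)$ contains no monomial, since $T(\tilde{J}(g))\neq\emptyset$; hence $\dim(S_\K/\tilde{J}(g))=\dim T(\tilde{J}(g))=n-1$ by Theorem \ref{tropbasics}, because a tropical hypersurface in $\R^n$ is $(n-1)$-dimensional. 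Corollary \ref{liftdim} then gives $\dim(S_{\K(V)}/\tilde{J}(y))=n-1$, so $\tilde{J}(y)$ is a height-one prime in the polynomial ring $S_{\K(V)}$, which is a unique factorization domain; therefore $\tilde{J}(y)$ is principal. I take $F(y)$ to be the unique element of the reduced Gr\"obner basis $\tilde{\Gc}=\Gc\cap S_{\K(V)}$ of $\tilde{J}(y)$ with respect to $\succ$ from Corollary \ref{groebsmallideal} — this basis is reduced because $\Gc$ is, and the reduced Gr\"obner basis of a principal ideal is a single polynomial, here homogeneous since $\tilde{J}(y)$ is. Then $\tilde{J}(y)=(F(y))$ and $F(y)\in\tilde{J}(y)\subset y(I)$ by Lemma \ref{JsubI}.

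It remains to descend to $g\in V$. By Corollary \ref{groebsmallideal}(ii), for $g\in U_{\Gc}$ the set $\tilde{\Gc}(g)=\{F(g)\}$ is a Gr\"obner basis of $\tilde{J}(g)$, so $\tilde{J}(g)=(F(g))$; here $F(g)$ is homogeneous, $F(g)\in\tilde{J}(g)\subset g(I)$ by Lemma \ref{JsubI}, and $T(\tilde{J}(g))=T(F(g))$ as sets. Hence, for every $g$ in the nonempty open set $U_0\cap U_{\Gc}$, the set $\pi^{-1}\pi(T(g(I)))$ coincides with the underlying set of the tropical hypersurface $T(F(g))$ of the element $F(g)\in g(I)$. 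Applying Lemma \ref{trophypeconst} to the homogeneous polynomial $F(y)\in\K(V)[x_1,\dots,x_n]$ and intersecting once more, we obtain an open $\emptyset\neq U\subset V$ on which $T(F(g))$ is one and the same polyhedral complex; this $U$ witnesses alternative (ii).

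The step I expect to be the main obstacle is the principality of $\tilde{J}(y)$, which hinges on pinning its Krull dimension down to exactly $n-1$: this forces one to combine the fact that $\tilde{J}(g)$ is a prime hypersurface ideal (via Proposition \ref{hypecase} and Theorem \ref{tropbasics}) with the dimension-transfer Corollary \ref{liftdim}, and only then to appeal to unique factorization in $S_{\K(V)}$ and to the graded structure. A secondary, bookkeeping, point is that the various open subsets of $V$ produced along the way — from Remark \ref{dimgen}, Proposition \ref{dimo}, Corollary \ref{groebsmallideal} and Lemma \ref{trophypeconst} — all have to be intersected down to a single nonempty open $U$.
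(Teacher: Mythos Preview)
The gap is in your case distinction. You claim that Proposition \ref{dimo} forces alternative (i) to occur exactly when $m=0$, but that proposition produces a nonempty open subset of $\GL_n({\kx})$, not of the given subvariety $V$; for a proper subvariety there is no reason this open set should meet $V$. Concretely, take $I=(x_1)\subset S_{\K}={\K}[x_1,\ldots,x_n]$ with $n\geq 2$ (prime, $m=n-1>0$) and let $V=\{\id\}$; then $T(g(I))=T((x_1))=\emptyset$ for the unique $g\in V$, so alternative (i) holds although $m>0$. Hence your claim ``since $m>0$ the complex $T(g(I))$ is nonempty there'' is unsupported, and without it you cannot invoke Proposition \ref{hypecase} to conclude that $T(\tilde J(g))$ is a tropical hypersurface, nor deduce $\dim(S_{\K}/\tilde J(g))=n-1$, nor lift this to $\tilde J(y)$.

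The paper avoids this by assuming directly that (i) fails, picking a single $\hat g\in V$ with $T(\hat g(I))\neq\emptyset$, and using this specific $\hat g$ --- together with an auxiliary projection $\rho$ adapted to the polyhedra of $T(\hat g(I))$ via Lemma \ref{dense} --- to obtain $\dim(S_{\K}/\tilde J(\hat g))=n-1$; Corollary \ref{liftdim} then transfers this to $\tilde J(y)$. Once the height of $\tilde J(y)$ is pinned down, your remaining steps (principality, choice of $F(y)$, specialisation via Corollary \ref{groebsmallideal} and Lemma \ref{trophypeconst}) match the paper's. Your idea of taking $F(y)$ as the unique element of the reduced Gr\"obner basis $\tilde\Gc$ is a clean shortcut for the specialisation $\tilde J(g)=(F(g))$, which the paper carries out by hand.
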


\begin{proof}
If $T(g(I))=\emptyset$ for all $g\in V$, there is nothing to prove. Assume there exists a $\hat{g}\in V$, such that $T(\hat{g}(I))\neq\emptyset$. In particular, $\dim (S_{\K}/I)>0$ in this case. The idea of the proof is to obtain a polynomial $F(y)\in {\K}(V)[x_1,\ldots,x_n]$, such that $\tilde{J}(y)=(F(y))$ is a principal ideal. Then we want to choose $U\subset V$, such that if we substitute the coefficients $g_{ij}$ for $y_{ij}$ for $g\in U$, we get that $T(\tilde{J}(g))=T(F(g))$ is the same tropical hypersurface.

Since $I$ is prime, the tropical variety $T(\hat{g}(I))$ of $\hat{g}(I)$ is a pure $m$-dimensional polyhedral complex. By Lemma \ref{dense} and Proposition \ref{hypecase} there exists a projection $\rho\in \Pi$, such that the tropical variety $T(\tilde{J}(\hat{g}))=\rho^{-1}\rho(T(\hat{g}(I)))$ is a tropical hypersurface. As $\tilde{J}(\hat{g})$ is prime by Lemma \ref{jtildeprime}, we have $\dim (S_{\K}/\tilde{J}(\hat{g}))=n-1$. Thus $\dim(S_{{\K}(V)}/\tilde{J}(y))=n-1$ by Corollary \ref{liftdim}. In addition, again by Lemma \ref{jtildeprime} the ideal $\tilde{J}(y)\subset {\K}(V)[x_1,\ldots,x_n]$ is a prime ideal. So $\tilde{J}(y)$ is a principal ideal, as it is prime and of height $1$. This shows that $\tilde{J}(y)=(F(y))$ for a non-zero homogeneous polynomial $F(y)\in {\K}(V)[x_1,\ldots,x_n]$. Note that by Lemma \ref{JsubI} we have indeed $F(y)\in y(I)$.

The next aim is to substitute appropriate $g$ for the $y$ in $F(y)$, such that the conditions in (ii) are fulfilled. In Corollary \ref{groebsmallideal} we have obtained a finite generating set $\tilde{\Gc}$ of the ideal $\tilde{J}(y)\subset {\K}(V)[x_1,\ldots,x_n]$, such that if we substitute $g$ in some non-empty Zariski-open set $U_{\Gc}\subset V$, then $\tilde{\Gc}(g)$ is a generating set of $\tilde{J}(g)\subset {\K}[x_1,\ldots,x_n]$. We have $$\tilde{J}(y)=(\tilde{\Gc})=(f_1(y),\ldots,f_q(y))=(F(y)),$$ so we can write $f_j(y)=r_{j}(y) F(y)$ and $F(y)=\sum_{j=0}^q s_j(y) f_j(y)$ for some polynomials $r_{j}(y),s_j(y)\in {\K}(V)[x_1,\ldots,x_n]$  for every $j=1,\ldots,q$. Choose $\tilde{U}\subset U_{\Gc}$ to be a non-empty open subset of $V$, such that for $g\in \tilde{U}$ no denominator in any of the coefficients of $F(y)$ and in any of the $r_{j}(y)$ and $s_j(y)$ for $j=1,\ldots,q$ vanishes. This condition implies that $\tilde{J}(g)=(F(g))$, hence, $T(F(g))=T(\tilde{J}(g))$ for all $g\in \tilde{U}$. Moreover, by Lemma \ref{trophypeconst} we can choose a non-empty open subset $U'\subset \tilde{U}\subset V$, such that $T(F(g))$ is the same polyhedral complex for all $g\in U'$. In addition, $F(g)\in g(I)$ by Lemma \ref{JsubI}.

By Proposition \ref{hypecase} together with Remark \ref{dimgen} there exists an open subset $\emptyset\neq U''\subset V$, such that $T(\tilde{J}(g))=\pi^{-1} \pi(T(g(I)))$. Hence, for every $g\in U=U'\cap U''$ all the conditions in (ii) are met, which proves the claim.
\end{proof}

In the previous statement it was shown that for a given rational projection $\pi$ under certain conditions $\pi^{-1}\pi(T(g(I)))$ is the same tropical hypersurface for almost all choices of coordinates $g$. We will now use a theorem by Bieri and Groves (in the version stated as Theorem \ref{bierigrovesorig}) to show that in the generic case $T(g(I))$ is cut out by finitely many rational projections. This proves the main result for the case of graded prime ideals, including the existence of generic tropical bases as defined in Definition \ref{gentropbasis}.

\begin{theorem}\label{thm:mainprime}
Let $I\subset S_{\K}={\K}[x_1,\ldots,x_n]$ be a prime ideal and $\emptyset\neq V\subset \GL_n({\kx})$ a closed subvariety.
Then there exists a non-empty Zariski-open set $U\subset V$, such that $T(g(I))$ is the same (possibly empty) polyhedral complex $\gT_V(I)$ for all $g\in U$. Moreover, if $\gT_V(I)\neq \emptyset$ there exists a generic tropical basis of $I$.
\end{theorem}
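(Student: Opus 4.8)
The plan is to first dispose of the trivial case and then to collapse, via the finiteness statement of Corollary~\ref{finiteoptions}, the finitely many candidate complexes into a single one by recovering them all simultaneously from rational projections. If $\dim(S_{\K}/I)=0$, then $I$, and hence each $g(I)$ (which has the same Hilbert function), contains a monomial, so $T(g(I))=\emptyset$ for every $g\in V$ and we may take $U=V$. So assume $\dim(S_{\K}/I)=m>0$. Combining Proposition~\ref{dimo} (its open subset of $\GL_n({\kx})$ meets $V$ since $V$ is a subvariety of the irreducible space $\GL_n({\kx})$), Corollary~\ref{finiteoptions}, and Theorem~\ref{gengrobcomp}, I would fix a non-empty open $U_0\subset V$ such that for all $g\in U_0$ the tropical variety $T(g(I))$ is non-empty, equals one of finitely many polyhedral complexes $\Fc_1,\ldots,\Fc_s$, and $\GC(g(I))$ is a fixed polyhedral complex. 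Since each $g(I)$ is prime of dimension $m$ and monomial-free, Theorem~\ref{tropbasics} shows that each $\Fc_k$ is pure $m$-dimensional.

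Next I would invoke Remark~\ref{dimgen} to obtain a dense open set $D'\subset\Pi$ of rational projections $\pi\colon\R^n\to\R^{m+1}$ with $\dim\pi(P)=m$ for every maximal polyhedron $P$ of $T(g(I))$, for all $\pi\in D'$ and $g\in U_0$; in particular $\pi(\Fc_k)$ is a finite union of polyhedra of dimension at most $m$ for every $k$ and every $\pi\in D'$. Applying the strengthened Bieri--Groves statement Theorem~\ref{bierigrovesorig} to the subsets $\Fc_1,\ldots,\Fc_s$ with the dense set $D'$ produces rational projections $\pi_0,\ldots,\pi_n\in D'$ such that
\[
\Fc_k=\bigcap_{i=0}^n \pi_i^{-1}\pi_i(\Fc_k)\qquad\text{for every }k=1,\ldots,s.
\]
For each $i$, Theorem~\ref{tropbasiselement} applies (we are in case~(ii), since $T(g(I))\neq\emptyset$ for $g\in U_0$), giving $F_i(y)\in y(I)$ and a non-empty open $U_i\subset V$ such that $T(F_i(g))$ is a fixed polyhedral complex on $U_i$ and $\pi_i^{-1}\pi_i(T(g(I)))=T(F_i(g))$ as sets for $g\in U_i$.

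Now set $U=U_0\cap\bigcap_{i=0}^n U_i$. For $g\in U$ the set $T(g(I))$ is one of the $\Fc_k$, so the Bieri--Groves identity together with Theorem~\ref{tropbasiselement}(ii) yields $T(g(I))=\bigcap_{i=0}^n T(F_i(g))$ as sets; since each $T(F_i(g))$ is constant on $U$, this underlying set is independent of $g\in U$. As $T(g(I))$ is by definition the subcomplex of the (on $U$ constant) Gr\"obner complex $\GC(g(I))$ induced on that set, $T(g(I))$ is the same polyhedral complex $\gT_V(I)$ for all $g\in U$. For the generic tropical basis when $\gT_V(I)\neq\emptyset$, I would fix homogeneous generators $p_1,\ldots,p_r$ of $I$, so that $y(p_1),\ldots,y(p_r)$ generate $y(I)$ and $g(p_1),\ldots,g(p_r)$ generate $g(I)$ for every $g\in\GL_n({\kx})$; after shrinking $U$ using Lemma~\ref{trophypeconst} applied to each (homogeneous) $y(p_j)$, the finite family $\{F_0(y),\ldots,F_n(y)\}\cup\{y(p_1),\ldots,y(p_r)\}\subset y(I)$ is a system of homogeneous generators of $g(I)$ for $g\in U$ each member of which has constant tropical variety on $U$, and since $T(g(p_j))\supseteq T(g(I))$ while $\bigcap_i T(F_i(g))=T(g(I))$, their common tropical variety is exactly $T(g(I))$ — so this family is a generic tropical basis of $I$ on $V$, valid on $U$.

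The step I expect to be the main obstacle is the simultaneous use of Bieri--Groves: recovering any \emph{single} candidate $\Fc_k$ from rational projections is Proposition~\ref{hypecase}, but the argument must collapse $s$ potentially different complexes into one, and therefore needs one finite list of projections $\pi_0,\ldots,\pi_n$ that works for all of $\Fc_1,\ldots,\Fc_s$ at once; this is precisely the content of Theorem~\ref{bierigrovesorig}, and one then has to transport those projections through Theorem~\ref{tropbasiselement} to obtain the polynomials $F_i(y)\in y(I)$ whose tropical hypersurfaces cut out $\gT_V(I)$ generically.
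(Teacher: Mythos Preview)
Your argument is correct and follows the same route as the paper's proof: reduce via Corollary~\ref{finiteoptions} to finitely many candidate complexes, apply the simultaneous Bieri--Groves statement (Theorem~\ref{bierigrovesorig}) to recover all of them from a common finite list of projections, invoke Theorem~\ref{tropbasiselement} for each projection, and then upgrade set-constancy to complex-constancy via the fixed Gr\"obner complex; the tropical basis is obtained exactly as in the paper by adjoining a generating set and shrinking via Lemma~\ref{trophypeconst}.

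One minor slip worth flagging: your claim that the open set furnished by Proposition~\ref{dimo} ``meets $V$ since $V$ is a subvariety of the irreducible space $\GL_n({\kx})$'' is false in general --- a non-empty (hence dense) open subset of an irreducible space need not intersect a given proper closed subvariety. This step is, however, unnecessary for the argument: the dichotomy in Theorem~\ref{tropbasiselement} already isolates the case $T(g(I))=\emptyset$ for all $g\in V$, and in the remaining case~(ii) the polynomials $F_i(y)$ and open sets $U_i$ are produced regardless, after which the constancy of $\bigcap_i T(F_i(g))$ on $U$ forces $T(g(I))$ to be simultaneously empty or simultaneously non-empty there. You can therefore drop the appeal to Proposition~\ref{dimo} without loss.
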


\begin{proof}
By Theorem \ref{tropbasics} for each $g\in V$ the tropical variety $T(g(I))$ is either empty or pure of dimension $m$.
From Corollary \ref{finiteoptions} we know that there is a non-empty open subset $\tilde{U}\subset V$, such that if $T(g(I))\neq \emptyset$, it is one of finitely many pure $m$-dimensional polyhedral complexes $\left\{\Fc_1,\ldots,\Fc_t\right\}$ for all $g\in
\tilde{U}$, but this complex is not a priori independent of the chosen $g$.

Since the set $D'\subset \Pi$ as defined in Remark \ref{dimgen} is open and dense in $\Pi$, by Theorem \ref{bierigrovesorig} there exist rational projections $\pi_0,\ldots,\pi_n\in D'$, such that $$\Fc_k=\bigcap_{i=0}^n \pi_i^{-1} \pi_i(\Fc_k)$$ for every $k$. For every $i=0,\ldots,n$ there exist $F_i(y)\in {\K}(V)[x_1,\ldots,x_n]$ and non-empty Zariski-open sets $U^i\subset \tilde{U}$, such that $\pi_i^{-1}\pi_i(T(g(I)))$ is either empty or a tropical hypersurface generated by $F_i(g)\in g(I)$, such that $T(F_i(g))$ is the same polyhedral complex for all $g\in U^i$ by Theorem \ref{tropbasiselement}. In particular, for a fixed index $i$ the set $\pi_i^{-1}\pi_i(T(g(I)))$ is the same subset of $\R^n$ for all $g\in U^i$. Let $U=\bigcap_{i=0}^n U^i$, which as an intersection of
finitely many non-empty open sets is itself open. As $$\bigcap_{i=0}^n \pi_i^{-1} \pi_i(T(g(I)))=T(g(I))$$ is the same set for all $g\in U$ as well, this proves the existence of generic tropical varieties as a set.

Assume that $T(g(I))\neq\emptyset$. Since the tropical variety $T(g(I))$ is a subcomplex of the generic Gr\"obner complex $\gGC(I)$ for every $g\in U\subset \tilde{U}$, we have a natural complex structure on $T(g(I))$. It follows that $T(g(I))$ is also constant as a polyhedral complex for all $g\in U$ with this complex structure induced by Gr\"obner basis theory.

Moreover, one can obtain a generic tropical basis of $I$ as follows. Since the $F_i(g)$ already cut out the tropical variety for $g\in U$, we only need to add a finite generating system of constant support. For this choose homogeneous generators $f_1,\ldots,f_s$ of $I$. Then $y(f_1),\ldots,y(f_s)$ generate $y(I)\subset {\K}(V)[x_1,\ldots,x_n]$. By Lemma \ref{trophypeconst} we can choose $\emptyset\neq U'\subset U$ open, such that $T(g(f_i))$ is the same polyhedral complex for all $g\in U'$ and all $i$. Adding the $y(f_1),\ldots,y(f_s)\in y(I)$ to the set of the $F_i(y)\in y(I)$ yields a generic tropical basis of $I$ on $V$ valid on $U'$.
\end{proof}

With Theorem \ref{tropbasics} the assumption that $I$ is prime can be dropped. We need the following auxiliary result.

\begin{lemma}\label{monlift}
Let $V\subset \GL_n({\kx})$ a subvariety, $P\subset S_{\K}$ be a graded prime ideal and $y(P)$ be its extension in $S_{{\K}(V)}$ via the inclusion given by Definition \ref{gendef}. If $y(P)$ contains no monomial, then there exists $g\in V$, such that $g(P)$ contains no monomial.
\end{lemma}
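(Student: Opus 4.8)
The statement is a "lifting" result: the generic fiber $y(P)$ over $V$ has no monomial, so almost all special fibers $g(P)$ should inherit this. The natural strategy is a contrapositive/spreading-out argument. Suppose that for \emph{every} $g\in V$ the ideal $g(P)$ contains a monomial; I want to derive that $y(P)$ contains a monomial, contradicting the hypothesis. The first step is to bound the degree of a possible monomial uniformly in $g$. For this I would invoke Corollary \ref{finiteoptions} (or directly Theorem \ref{gengrobcomp} and the finiteness of monomial ideals with fixed Hilbert function, \cite[Corollary 2.2]{MA}): on a non-empty open $\tilde U\subset V$ the tropical variety $T(g(I))$ — equivalently, whether $g(P)$ contains a monomial and in which degree — is governed by one of finitely many possibilities, so there is a single degree bound $d_0$ such that if $g(P)$ contains a monomial then it contains one of degree $\le d_0$. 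Actually it is cleaner to phrase it as: $g(P)$ contains \emph{some} monomial iff $g(P)_d$ contains a monomial for some $d\le d_0$, with $d_0$ independent of $g$.

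Next I would pass to a single degree $d\le d_0$ and work inside the finite-dimensional vector spaces. Fix $d$ and set $D=\dim_{\K(V)} y(P)_d=\dim_{\K} g(P)_d$ (constant by Remark \ref{primesinext}/Proposition \ref{extkrempel}), and let $N$ be the number of monomials of degree $d$. As in the proof of Lemma \ref{groebpoly}, represent $y(P)_d$ by a $D\times N$ matrix over $\K(V)$ whose entries, after clearing denominators, are polynomials in the residue classes of the $y_{ij}$. The condition "$g(P)_d$ contains the monomial $x^{\mu_k}$" is the condition that the $k$-th standard basis vector of degree-$d$ monomial space lies in the row span of the specialized matrix; this is expressible by the vanishing of certain minors (Plücker-type coordinates), hence it is a Zariski-closed condition on $g\in V$. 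So $\{g\in V: x^{\mu_k}\in g(P)_d\}$ is closed in $V$; call it $Z_{k,d}$.

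The key step is then: if $\bigcup_{d\le d_0}\bigcup_k Z_{k,d}$ covers all of $V$, then since $V$ is irreducible one of the (finitely many) closed sets $Z_{k,d}$ equals $V$, i.e. there is a fixed monomial $x^{\mu_k}$ of degree $d$ with $x^{\mu_k}\in g(P)_d$ for \emph{all} $g\in V$. Now I translate this back to the generic point: the vanishing of the relevant minors for every $g\in V$ means those minor expressions lie in the defining prime ideal $P_V\subset {\kx}[Y]$ of $V$ (Nullstellensatz applies since ${\kx}$ is algebraically closed and $V$ is a variety), equivalently they are zero in ${\kx}[Y]/P_V$ and hence in $\K(V)$ (using the inclusion $\K(V)\hookrightarrow {\kx}(V)\{\!\{t\}\!\}$ of Corollary \ref{valfieldext} to see that a polynomial in the residue classes of the $y_{ij}$ vanishing for all $g\in V$ vanishes identically). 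Therefore $x^{\mu_k}\in y(P)_d\subset y(P)$, contradicting the hypothesis that $y(P)$ contains no monomial. Hence some $g\in V$ has $g(P)$ monomial-free, as claimed.

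The main obstacle I anticipate is the uniform degree bound: one must be careful that "contains a monomial" can be detected in a degree that does not depend on $g$. I would address this exactly via the generic Gröbner complex machinery already established (Theorem \ref{gengrobcomp} and the finiteness input \cite[Corollary 2.2]{MA}), which gives a finite set $\Dc$ of degrees controlling all initial ideals $\inom_\omega(g(I))$ generically; since a monomial in $g(P)$ gives a monomial in some $\inom_\omega(g(P))$ and vice versa, a monomial, if present, is present in a degree from $\Dc$. A secondary, purely bookkeeping point is that $P$ is prime so $y(P)$ is prime (Proposition \ref{extkrempel}), which guarantees the vector-space dimensions match up; but that is already recorded in Remark \ref{primesinext}. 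Everything else is the standard "constructibility is preserved, irreducibility kills the covering" argument.
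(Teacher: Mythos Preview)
Your overall strategy --- cover the irreducible $V$ by finitely many closed sets, then lift to the generic point via the Nullstellensatz --- is sound and would give a valid alternative proof, but your justification of the uniform degree bound has a real gap. The finite set $\Dc$ produced in the proof of Theorem \ref{gengrobcomp} bounds the degrees of \emph{generators of the initial ideals} $\inom_\omega(g(P))$, not of monomials lying in $g(P)$ itself. Your claimed ``vice versa'' is simply false: for generic $\omega$ the ideal $\inom_\omega(g(P))$ is a monomial ideal even when $g(P)$ contains no monomial at all. In the forward direction, a monomial $x^\nu\in g(P)$ does lie in every $\inom_\omega(g(P))$, but it need not be a minimal generator there, so $|\nu|$ need not lie in $\Dc$. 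Nothing in the Gr\"obner-complex machinery supplies the degree bound you need, and without it the covering by the closed sets $Z_{k,d}$ is not finite.

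The fix is immediate once you exploit that $P$ is \emph{prime}: each $g(P)$ is prime (as $g$ is an automorphism of $S_{\K}$), and a graded prime contains a monomial iff it contains some variable $x_i$, iff it contains the single monomial $x_1\cdots x_n$. So $d_0=n$ comes for free, and with this your Pl\"ucker-minor argument goes through (noting that the minors have coefficients in ${\K}$, so their vanishing at $g\in {\kx}^{n^2}$ unwinds to the vanishing of each $t$-coefficient, a ${\kx}$-polynomial condition to which the Nullstellensatz applies). The paper's proof uses exactly this primality reduction to $x_1\cdots x_n$, but then checks non-membership more directly: by Theorem \ref{gengroebfanonsubvar} there is a classical reduced Gr\"obner basis $\Gc(y)$ of $y(P)$ that specializes with constant support on an open set; dividing $x_1\cdots x_n$ by $\Gc(y)$ yields a nonzero normal form $r(y)$, and on the open locus where $r(g)\neq 0$ one has $x_1\cdots x_n\notin g(P)$. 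Your route trades this division step for a constructibility/irreducibility argument; both are short once the primality observation is in hand.
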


\begin{proof}
Although the valuation on ${\K}$ is not trivial, in this proof we use classical Gr\"obner basis theory in $S_{{\K}(V)}$ and $S_{\K}$. Choose any term order $\succ$ on $S_{{\K}(V)}$ (this is a term order on $S_{\K}$ as well). By Theorem \ref{gengroebfanonsubvar} there exists a non-empty open subset $U\subset V$, such that the reduced Gr\"obner basis $\Gc(y)=\left\{f_1(y),\ldots,f_s(y)\right\}$ of $y(P)$ with respect to $\succ$ and the sets $\Gc(g)$, where the $g_{ij}$ are substituted for $y_{ij}$, have the same support for all $g\in U$. Moreover, $\Gc(g)$ is the reduced Gr\"obner basis of $g(P)$ with respect to $\succ$ for $g\in U$. Assume that $y(P)$ does not contain a monomial. In particular, we then have $x_1\cdots x_n\notin y(P)$. Dividing $x_1\cdots x_n$ by the Gr\"obner basis $\Gc(y)$ (in the sense of \cite[Algorithm 1.3.4]{MATH}) yields an expression $$x_1\cdots x_n=\sum_{j=1}^s f_j(y)r_j(y)+r(y),$$ where $f_j(y)\in \Gc(y)$, $r_j(y),r(y)\in S_{{\K}(V)}$ and $r(y)$ is the normal form of $x_1\cdots x_n$ with respect to $\Gc(y)$. Since $x_1\cdots x_n\notin y(P)$, the polynomial $r(y)\neq 0$. Thus there exists an open subset $\emptyset\neq U'\subset U$, such that $r(g)\neq 0$ in $S_{\K}$ for $g\in U'$. As $\Gc(g)=\left\{f_1(g),\ldots,f_s(g)\right\}$ is a Gr\"obner basis of $g(P)$ and $r(g)$ is the normal form of $x_1\cdots x_n$ with respect to $\Gc(g)$, this implies $x_1\cdots x_n\notin g(P)$ for $g\in U'$. Hence, $g(P)$ cannot contain a monomial for $g\in U'$, since it is prime and every prime ideal in $S_{\K}$ containing a monomial contains the particular monomial $x_1\cdots x_n$. This proves the claim.
\end{proof}

\begin{theorem}\label{thm:main}
Let $I\subset S_{\K}$ be a graded ideal and $V\subset \GL_n({\kx})$ a subvariety. Then $\gT_V(I)$ exists. If $\gT_V(I)\neq\emptyset$, there exists a generic tropical basis of $I$ on $V$.
\end{theorem}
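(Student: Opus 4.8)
The plan is to reduce the general graded case to the prime case already handled in Theorem \ref{thm:mainprime}, using the decomposition formula $T(g(I))=\bigcup_P T(g(P))$ from Theorem \ref{tropbasics}(i), where the union runs over the minimal primes of $I$. First I would invoke Remark \ref{primesinext}: if $P_1,\ldots,P_s$ are the minimal primes of $I$ in $S_{\K}$, then $g(P_1),\ldots,g(P_s)$ are exactly the minimal primes of $g(I)$ for every $g\in\GL_n({\kx})$, and similarly $y(P_1),\ldots,y(P_s)$ are the minimal primes of $y(I)$ in $S_{{\K}(V)}$. Hence for every $g$ we have the set-theoretic identity $T(g(I))=\bigcup_{k=1}^s T(g(P_k))$. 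Since each $P_k$ is a graded prime ideal, Theorem \ref{thm:mainprime} applied to each $P_k$ on $V$ yields a non-empty Zariski-open $U_k\subset V$ on which $T(g(P_k))$ is a fixed polyhedral complex, say $\gT_V(P_k)$, and (if non-empty) a generic tropical basis valid on some open subset. Intersecting, $U^\ast=\bigcap_k U_k$ is a non-empty open subset of $V$ on which $T(g(I))=\bigcup_k \gT_V(P_k)$ is the same \emph{set} for all $g\in U^\ast$.

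Next I would upgrade this set-theoretic statement to an equality of polyhedral complexes. By Theorem \ref{gengrobcomp} there is a non-empty open $U_{\GC}\subset V$ on which $\GC(g(I))$ is a fixed polyhedral complex $\gGC_V(I)$. The tropical variety $T(g(I))$ is by definition the subcomplex of $\GC(g(I))$ supported on the set $\{\omega:\inom_\omega(g(I))\text{ contains no monomial}\}$. So on $U^\ast\cap U_{\GC}$, both the ambient complex $\gGC_V(I)$ and the underlying set of $T(g(I))$ are independent of $g$; therefore the induced subcomplex is independent of $g$, and we may call it $\gT_V(I)$. This establishes existence of the generic tropical variety.

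For the generic tropical basis when $\gT_V(I)\neq\emptyset$, I would first handle a subtlety: a minimal prime $P_k$ might contain a monomial (equivalently $T(P_k)=\emptyset$ or rather $g(P_k)$ containing a monomial generically), in which case it contributes nothing and Theorem \ref{thm:mainprime} gives us no polynomials from it. Here is where I expect the main obstacle, and where Lemma \ref{monlift} is the tool: if $y(P_k)$ contains no monomial, then $g(P_k)$ contains no monomial for $g$ in some non-empty open set, so generically $T(g(P_k))\neq\emptyset$ and $T(g(P_k))$ is a genuine (non-empty) pure-dimensional contribution; conversely if $y(P_k)$ contains a monomial we simply discard that index. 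Thus after shrinking $U^\ast$ we may assume that precisely the ``monomial-free'' indices $k$ contribute, and for each such $k$ Theorem \ref{thm:mainprime} supplies finitely many polynomials $F^{(k)}_i(y)\in y(P_k)\subset y(I)$ (the last inclusion because $P_k\supset$ is wrong---rather $P_k$ are the components, but $y(P_k)$ need not lie in $y(I)$; instead note $T(g(I))=\bigcup_k T(g(P_k))$ as sets, and the $F^{(k)}_i(g)$ cut out $T(g(P_k))$) together with an open set on which $T(F^{(k)}_i(g))$ is constant.

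Finally I would assemble the generic tropical basis. Since $T(g(I))=\bigcup_k T(g(P_k))=\bigcup_k\bigcap_i T(F^{(k)}_i(g))$, one cannot directly intersect; instead I would take, for each tuple $(i_k)_k$ choosing one element $F^{(k)}_{i_k}$ from each contributing component, the product $\prod_k F^{(k)}_{i_k}(g)$, whose tropical hypersurface is $\bigcup_k T(F^{(k)}_{i_k}(g))$; ranging over all such tuples, the intersection of these finitely many tropical hypersurfaces equals $\bigcup_k\bigcap_i T(F^{(k)}_i(g))=T(g(I))$. Each such product lies in $y(I)$ because each factor lies in the corresponding $y(P_k)\subseteq$ --- here one must instead argue via $F^{(k)}_{i_k}(g)\in g(P_k)$ and observe that the product of one element from each minimal prime lies in every $g(P_k)$ hence in $\bigcap_k g(P_k)=\sqrt{g(I)}$; to land inside $g(I)$ itself one takes a suitable power, or alternatively one works with $T(\sqrt{I})=T(I)$ and replaces $I$ by $\sqrt I$ at the outset (legitimate by Theorem \ref{tropbasics}(i), which also preserves the Gr\"obner-complex-induced structure). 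Then, exactly as in the proof of Theorem \ref{thm:mainprime}, I would add $y(f_1),\ldots,y(f_s)$ for homogeneous generators $f_1,\ldots,f_s$ of $I$ (or of $\sqrt I$), shrinking $U$ once more via Lemma \ref{trophypeconst} so that each $T(g(f_j))$ is constant; the union of these polynomials with the products above is a finite subset of $y(I)$ which is a generating system and whose members have generically constant tropical hypersurfaces and whose intersection of tropical hypersurfaces is $T(g(I))$, i.e.\ a generic tropical basis of $I$ on $V$ valid on the final open set $U$. The one genuinely delicate point, which I would spell out carefully, is the membership-in-$y(I)$ bookkeeping when passing from components back to $I$; everything else is a routine intersection of finitely many non-empty Zariski-open subsets of $V$.
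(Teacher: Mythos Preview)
Your argument for the \emph{existence} of $\gT_V(I)$ is correct and matches the paper's proof exactly: decompose into minimal primes via Theorem \ref{tropbasics}, apply Theorem \ref{thm:mainprime} to each, intersect the open sets, and upgrade from a set equality to a polyhedral-complex equality using the generic Gr\"obner complex from Theorem \ref{gengrobcomp}.

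For the generic tropical basis, your product construction (take one $F^{(k)}_{i_k}$ from each component's basis and form all products $\prod_k F^{(k)}_{i_k}$, then raise to a power to pass from $\sqrt{y(I)}$ into $y(I)$, then adjoin generators) is precisely the paper's inductive argument unfolded. However, there is a genuine gap in your treatment of components with $\gT_V(P_k)=\emptyset$. You propose to ``simply discard that index,'' but then your product ranges only over the remaining indices and therefore lies only in $\bigcap_{k\text{ kept}} y(P_k)$, not in $\bigcap_{k=1}^{s} y(P_k)=\sqrt{y(I)}$. No power of such a product will lie in $y(I)$ in general, so the polynomials you produce need not belong to $y(I)$ at all---the very ``membership-in-$y(I)$ bookkeeping'' you flag is not resolved by your sketch.

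The paper repairs this with Lemma \ref{monlift}, but used in the \emph{opposite} direction from yours. You invoke the lemma to say ``if $y(P_k)$ has no monomial then generically $g(P_k)$ has none,'' which merely justifies the dichotomy. The paper instead argues: if $\gT_V(P_k)=\emptyset$, then by Theorem \ref{tropbasiselement} $g(P_k)$ contains a monomial for \emph{all} $g\in V$, and the contrapositive of Lemma \ref{monlift} then forces $y(P_k)$ to contain an actual monomial $F_k$. Multiplying each of your products by such an $F_k$ for every discarded index does not change its tropical hypersurface (monomials have empty tropical variety) but does place the result in every $y(P_k)$, hence in $\sqrt{y(I)}$, after which taking powers lands you in $y(I)$. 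With this one correction your argument is complete and equivalent to the paper's.
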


\begin{proof}
Let $P_1,\ldots, P_t$ be the minimal prime ideals of $I$. By Theorem \ref{thm:mainprime} there
are Zariski-open sets $\emptyset\neq U_i\subset V$, such that
$T(g(P_i))$ is either empty or the same tropical variety for every $g\in U_i$. Since $g(P_1),\ldots,g(P_t)$ are the minimal prime
ideals of $g(I)$, by Theorem \ref{tropbasics} this implies $$T(g(I))=\bigcup_{i=1}^t T(g(P_i))$$
is the same set for every $g\in U=\bigcap_{i=1}^t U_i$. Analogously to the end of the
proof of Theorem \ref{thm:mainprime} one can additionally conclude
that $T(g(I))$ is also constant as a polyhedral complex for all $g\in U$ if it is non-empty.

Moreover, if $\gT_V(I)\neq\emptyset$, we can also obtain a generic tropical basis of $I$. The idea is to add a finite generating system of $y(I)$ (analogously to the proof of Theorem \ref{thm:mainprime}) to a set of polynomials in $y(I)$ which cut out $\gT_V(I)$ as follows:

We proceed by induction of the number $t$ of minimal primes of $I$. If $t=1$ and $P_1$ is the only minimal prime of $I$, then by Remark \ref{primesinext} the ideal $y(I)$ has only one minimal prime, which is $y(P_1)$. Thus $\sqrt{y(I)}=y(P_1)$. Let $\left\{F_1(y),\ldots,F_l(y)\right\}$ be a generic tropical basis of $P_1$ valid on an open subset $\tilde{U}\subset V$, which exists by Theorem \ref{thm:mainprime}. Since $F_i(y)\in y(P_1)=\sqrt{y(I)}$, there exists $n_i\in \N$, such that $F_i(y)^{n_i}\in y(I)$. This implies $F_i(g)^{n_i}\in g(I)$ for $g\in \tilde{U}$. Moreover, $T(F_i(g)^{n_i})=T(F_i(g))$ by Theorem \ref{tropbasics}, so the set $\left\{F_1(g)^{n_1},\ldots,F_l(g)^{n_l}\right\}$ also cuts out the tropical variety $T(g(I))$. Analogously to the procedure in the proof of Theorem \ref{thm:mainprime} we can add $\left\{F_1(y)^{n_1},\ldots,F_l(y)^{n_l}\right\}$ to a finite generating system of $y(I)$ to obtain a generic tropical basis of $I$.

Let $t>1$ and $P_1,\ldots,P_t$ be the minimal primes of $I$. If $\gT_V(I)\neq \emptyset$, we can assume without loss of generality that $\gT_V(P_1)\neq \emptyset$. By induction hypothesis we then have a generic tropical basis $\left\{H_1(y),\ldots,H_s(y)\right\}$ of $I':=\bigcap_{i=1}^{t-1} P_i$. We have to consider two cases:

\begin{enumerate}
\item If $\gT_V(P_t)\neq\emptyset$, we also have a generic tropical basis $\left\{F_1(y),\ldots,F_l(y)\right\}$ of $P_t$. Let $\emptyset\neq U'\subset U$ be open, such that both tropical bases are valid on $U'$. For $g\in U'$ we have
\begin{eqnarray*}
T(g(I)) & = & T\left(g(I')\cap g(P_t)\right)\\
        & = & T\left(g(I')\right)\cup T\left(g(P_t)\right)\\
        & = & \left(\bigcap_{j=1}^s T(H_j(g))\right)\cup \left(\bigcap_{k=1}^l T(F_k(g))\right)\\
        & = & \bigcap_{j,k} \left(T(H_j(g))\cup T(F_k(g))\right)\\
        & = & \bigcap_{j,k} T\left(H_jF_k(g)\right).
\end{eqnarray*}
So the products $H_jF_k(g)$ cut out $T(g(I))$ for $g\in U'$. All products $H_jF_k(y)$ are elements of $$\left(\bigcap_{i=1}^{t-1} y(P_i)\right)y(P_t)\subset \bigcap_{i=1}^{t} y(P_i)=\sqrt{y(I)},$$ see Remark \ref{primesinext}. This implies that we can choose $n_{jk}\in \N$, such that $(H_jF_k(y))^{n_{jk}}\in y(I)$. Since we know $T(H_jF_k(g))=T((H_jF_k(g))^{n_{jk}})$ for any $g$, we can add the $H_jF_k(y)^{n_{jk}}$ to a finite generating set of $y(I)$. This yields a generic tropical basis of $I$ on $V$.

\item If $\gT(P_t)=\emptyset$, we know that $g(P_t)$ contains a monomial for all $g\in V$ by Theorem \ref{tropbasiselement}. By Lemma \ref{monlift} this implies that $y(P_t)\subset S_{{\K}(V)}$ contains a monomial $F$. Since multiplying by a monomial does not change a tropical hypersurface, we have 
\begin{eqnarray*}
T(g(I)) & = & T(g(I'))\cup T(g(P_t))\\
        & = & T(g(I'))\\
        & = & \bigcap_{j=1}^s T(H_j(g))\\
        & = & \bigcap_{j=1}^s T(H_jF(g)).
\end{eqnarray*}
Since $F$ is an element of $y(P_t)$, it follows that $H_kF(y)\in y(I')\cap y(P_t)=\sqrt{y(I)}$. Choose integers $n_1,\ldots,n_s\in \N$, such that $H_kF(y)^{n_k}\in y(I)$. Adding $$\left\{H_1F(y)^{n_1},\ldots,H_sF(y)^{n_s}\right\}$$ to a generating set of $y(I)$ as constructed above yields a generic tropical basis of $I$ on $V$.
\end{enumerate}
\end{proof}

We end this chapter with two basic observations about the structure, that the different possible tropical varieties of an ideal $I$ induce on $\GL_n({\kx})$ in the following sense. By considering $g,g'\in \GL_n({\kx})$ to be equivalent if $T(g(I))=T(g'(I))$, it is a natural question, of what can be said about the corresponding equivalence classes. While it is hard in general to give a complete description of this structure, some fundamental properties can be established directly. We first show the set of all coordinate transformations which induce an empty tropical variety to be a closed subset of $\GL_n({\kx})$. This is proved by repeated use of Theorem \ref{tropbasiselement}.

\begin{cor}
Let $I\subset S_{\K}$ be graded ideal. Then the set $\left\{g\in\GL_n({\kx}): T(g(I))=\emptyset\right\}$ is closed in $\GL_n({\kx})$.
\end{cor}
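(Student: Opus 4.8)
The plan is to reduce the topological statement to a single polynomial membership condition, and then to observe that this condition is Zariski-closed in the affine space of $n\times n$ matrices.

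The only substantial point is the equivalence, valid for every $g\in\GL_n({\kx})$,
\[
T(g(I))=\emptyset \quad\Longleftrightarrow\quad x_1\cdots x_n\in\sqrt{g(I)}.
\]
For ($\Leftarrow$): if $(x_1\cdots x_n)^N\in g(I)$ for some $N$, then $\inom_{\omega}\bigl((x_1\cdots x_n)^N\bigr)=(x_1\cdots x_n)^N$ for every $\omega\in\R^n$, so every $\inom_{\omega}(g(I))$ contains a monomial and hence $T(g(I))=\emptyset$. For ($\Rightarrow$): suppose $x_1\cdots x_n\notin\sqrt{g(I)}$. Since $\sqrt{g(I)}$ is the intersection of the minimal primes of $g(I)$, some minimal prime $P$ of $g(I)$ does not contain $x_1\cdots x_n$, and therefore contains no monomial at all (a prime containing a monomial contains one of the variables, hence contains $x_1\cdots x_n$). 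In particular $P\neq(x_1,\dots,x_n)$, so $\dim(S_{\K}/P)\geq 1$, and Theorem \ref{tropbasics}(iii) applied to $P$ gives $\dim T(P)=\dim(S_{\K}/P)\geq 1$; by Theorem \ref{tropbasics}(i) we conclude $T(g(I))\supseteq T(P)\neq\emptyset$.

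Next I would make this condition visibly polynomial in $g$. Each $g\in\GL_n({\kx})$ acts as a ${\K}$-algebra automorphism of $S_{\K}$, so $\sqrt{g(I)}=g(\sqrt I)$ and the equivalence becomes: $T(g(I))=\emptyset$ iff $g^{-1}(x_1\cdots x_n)\in\sqrt I$. By Definition \ref{gendef}, $g$ sends $x_i$ to $\sum_j g_{ji}x_j$, so $g^{-1}$ sends $x_i$ to $\sum_j (g^{-1})_{ji}x_j$; clearing the denominator $\det g$, which is nonzero on $\GL_n({\kx})$, I set
\[
Q_g \;:=\; \prod_{i=1}^n\Bigl(\sum_{j=1}^n \mathrm{adj}(g)_{ji}\,x_j\Bigr)\;=\;(\det g)^{n}\,g^{-1}(x_1\cdots x_n)\ \in\ S_{\kx},
\]
a homogeneous polynomial of degree $n$ whose coefficients, with respect to the monomial basis of $(S_{\kx})_n$, are polynomials in the matrix entries $g_{kl}$. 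Since $(\det g)^{n}$ is a unit of $S_{\K}$ for $g\in\GL_n({\kx})$, this gives
\[
\bigl\{\,g\in\GL_n({\kx}):T(g(I))=\emptyset\,\bigr\}\;=\;\bigl\{\,g\in\GL_n({\kx}):Q_g\in\sqrt I\,\bigr\}.
\]

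Finally I would argue that the last set is closed. The intersection $(S_{\kx})_n\cap\sqrt I$ is a ${\kx}$-linear subspace of the finite-dimensional ${\kx}$-vector space $(S_{\kx})_n$, because $\sqrt I$ is an ideal of $S_{\K}$ and hence closed under sums and under multiplication by scalars from ${\kx}$. Choosing finitely many ${\kx}$-linear functionals on $(S_{\kx})_n$ whose common kernel is this subspace and composing them with the coefficients of $Q_g$, the condition $Q_g\in\sqrt I$ is expressed as the simultaneous vanishing of finitely many polynomials in the $g_{kl}$. Hence $\{g:Q_g\in\sqrt I\}$ is the trace on $\GL_n({\kx})$ of a Zariski-closed subset of $\mathbb{A}^{n^2}$, so it is closed in $\GL_n({\kx})$, which is the assertion. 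The only place where the theory of this paper is used is the first equivalence; everything afterwards is elementary linear algebra together with the closedness of preimages of linear subspaces under polynomial maps. Alternatively, in the spirit of the paper, the first step can be obtained from a repeated application of Theorem \ref{tropbasiselement} to the minimal primes of $I$, as in the proofs of Theorems \ref{thm:mainprime} and \ref{thm:main}.
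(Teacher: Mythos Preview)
Your argument is correct and takes a genuinely different route from the paper.

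The paper proves the corollary by a Noetherian descent using Theorem \ref{tropbasiselement}: on any irreducible closed $W\subset\GL_n({\kx})$ one has either $W\subset M_I$ or $W\cap M_I$ is contained in a proper closed subset of $W$; iterating over irreducible components and decreasing the dimension eventually exhibits $M_I$ as a finite union of closed sets. This makes essential use of the machinery developed in Sections \ref{techgensets}--\ref{generictropicalvarieties} (projection ideals, rational projections, the Bieri--Groves argument). Your proof bypasses all of this: the equivalence $T(g(I))=\emptyset\Leftrightarrow x_1\cdots x_n\in\sqrt{g(I)}$ follows already from Theorem \ref{tropbasics}, and once rewritten as $Q_g\in\sqrt I$ the closedness is pure linear algebra, since $(S_{\kx})_n\cap\sqrt I$ is a ${\kx}$-linear subspace of the finite-dimensional space $(S_{\kx})_n$ and the coefficients of $Q_g$ are polynomials in the entries of $g$. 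What your approach buys is an explicit and elementary description of $M_I$ as the zero locus of finitely many polynomials (linear functionals composed with the adjugate map); what the paper's approach buys is an illustration of how the dichotomy in Theorem \ref{tropbasiselement} organizes the stratification of $\GL_n({\kx})$, which is the theme leading into Corollary \ref{endlmoegl}. Two minor remarks: your appeal to Theorem \ref{tropbasics}(iii) for $T(P)\neq\emptyset$ is fine, but (ii) is the more direct citation; and it is worth noting that $\sqrt I$ is graded (as the radical of a graded ideal), so the degree-$n$ intersection you use is indeed a well-defined ${\kx}$-subspace.
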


\begin{proof}
We first consider the case, that $I$ is a prime ideal. Denote $$\left\{g\in\GL_n({\kx}): T(g(I))=\emptyset\right\}$$ by $M_I$. We proceed by inductively applying the following fact, which holds by Theorem \ref{tropbasiselement}: If $W\subset \GL_n({\kx})$ is an irreducible subvariety, either $W\subset M_I$ or $W\cap M_I\subsetneq W$ is contained in a closed proper subset of $W$.

To start, $\GL_n({\kx})$ is an irreducible subvariety of itself, so we either have $M_I=\GL_n({\kx})$ or $M_I\subsetneq \GL_n({\kx})$ is contained in a closed proper algebraic subset $W^1\subset \GL_n({\kx})$. In the first case there is nothing to prove, while in the second case note that $\dim W^1\leq n^2-1$. Let $W^1_1,\ldots,W^1_{t_1}$ be the irreducible components of $W^1$. By the above statement we can assume that $W^1_1,\ldots,W^1_{s_1}$ are the irreducible components with $W^1_k\subset M_I$ and $W^1_{s_1+1},\ldots,W^1_{t_1}$ are the irreducible components of $W^1$ with $M_I\cap W^1_k\subsetneq W^1_k$ is contained in a proper closed algebraic subset $W^2_k$ of $W^1_k$. Then $\dim W^2_k\leq n^2-2$. This yields the chain of inclusions $$\bigcup_{i=1}^{s_1} W^1_i\subset M_I\subset (\bigcup_{i=1}^{s_1} W^1_i)\cup (\bigcup_{i=s_1+1}^{t_1} W^2_i).$$

To proceed consider the irreducible components of $W^2=\bigcup_{i=s_1+1}^{t_1} W^2_i$ and apply Theorem \ref{tropbasiselement} to obtain a sequence of inclusions $$(\bigcup_{i=1}^{s_1} W^1_i)\cup (\bigcup_{i=1}^{s_2} W^2_i)\subset M_I \subset (\bigcup_{i=1}^{s_1} W^1_i)\cup (\bigcup_{i=1}^{s_2} W^2_i) \cup (\bigcup_{i=s_2+1}^{t_2} W^3_i),$$ where $W^2_1,\ldots,W^2_{s_2}$ are the irreducible components of $W^2$ contained $M_I$ and $W^2_{s_2+1},\ldots,W^2_{t_2}$ are the irreducible components of $W^2$ with $M_I\cap W^2_k\subset W^3_k$ for $W^3_k\subset W^2_k$ closed with $\dim W^3_k\leq n^2-3$ for each such $k$. By inductively decreasing the dimension of the closed set marking the difference between the sets on the left and right side of the chain of inclusions, we obtain the desired result for the case that $I$ is prime.

Let $I\subset S_{\K}$ be an arbitrary graded ideal and $P_1,\ldots,P_q$ its minimal primes. Then by Theorem \ref{tropbasics} we know that $T(g(I))=\bigcup_{r=1}^{q} T(g(P_r))$ for $g\in \GL_n({\kx})$. So for a given $g$ we have $T(g(I))=\emptyset$ if and only if $T(g(P_r))=\emptyset$ for every $r$. Hence, $M_I=\bigcap_{r=1}^q M_{P_r}$, which is itself closed as an intersection of the closed subsets $M_{P_r}$.
\end{proof}

Moreover, we can show that the set of equivalence classes of the above equivalence relation is finite, giving rise to only finitely many possible tropical varieties of a fixed ideal under an arbitrary (linear) coordinate change.

\begin{cor}\label{endlmoegl}
Let $I\subset S_{\K}$ be a graded ideal. Then there exist finitely many polyhedral complexes $\Fc_1,\ldots,\Fc_t$ in $\R^n$, such that for any $g\in \GL_n({\kx})$ we $T(g(I))=\Fc_k$ for some $k$.
\end{cor}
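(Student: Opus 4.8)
The plan is to deduce this from the existence of a generic tropical variety on an \emph{arbitrary} subvariety (Theorem \ref{thm:main}) by Noetherian induction on closed subsets of $\GL_n(\kx)$. The relevant observation is that $\GL_n(\kx)$, being the non-vanishing locus of $\det$ inside the affine space $\kx^{n^2}$, is a Noetherian topological space, so that every non-empty collection of closed subsets of $\GL_n(\kx)$ has a minimal element.

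First I would isolate the following intermediate claim: for every closed subset $W\subset\GL_n(\kx)$ there is a \emph{finite} list of polyhedral complexes $\Fc_1^W,\ldots,\Fc_{t(W)}^W$ in $\R^n$ such that $T(g(I))\in\{\Fc_1^W,\ldots,\Fc_{t(W)}^W\}$ for every $g\in W$. (The claim is vacuous for $W=\emptyset$.) Suppose the claim fails for some closed subset, and choose a counterexample $W_0$ that is minimal with respect to inclusion; it is non-empty. If $W_0$ were reducible we could write $W_0=W_1\cup W_2$ with $W_1,W_2$ proper closed subsets of $W_0$; by minimality each $W_i$ satisfies the claim, and concatenating the two finite lists gives a finite list valid on $W_0$, a contradiction. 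Hence $W_0$ is irreducible, i.e.\ a subvariety in the sense of Section \ref{genericity}, and Theorem \ref{thm:main} applies with $V=W_0$: there is a non-empty Zariski-open subset $U\subset W_0$ on which $T(g(I))$ equals the single complex $\gT_{W_0}(I)$. Now $W_0\setminus U$ is a proper closed subset of $W_0$, so by minimality there is a finite list $\Fc_1,\ldots,\Fc_s$ with $T(g(I))\in\{\Fc_1,\ldots,\Fc_s\}$ for all $g\in W_0\setminus U$. Then $\{\gT_{W_0}(I),\Fc_1,\ldots,\Fc_s\}$ is a finite list valid on all of $W_0$, contradicting the choice of $W_0$. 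Therefore the claim holds for every closed subset, and applying it to $W=\GL_n(\kx)$ yields the corollary.

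I expect the argument to be essentially formal once Theorem \ref{thm:main} is in hand; the only points needing a word of care are that $\GL_n(\kx)$ and its closed subsets form a Noetherian space (so that a minimal counterexample exists) and that such a minimal counterexample is automatically irreducible (so that $\gT_{W_0}(I)$ is defined and Theorem \ref{thm:main} is applicable). Alternatively, one could avoid the abstract Noetherian induction and argue by descent on dimension: by Corollary \ref{finiteoptions} peel off a dense open subset of $\GL_n(\kx)$ on which $T(g(I))$ takes finitely many values, and then run through the finitely many irreducible components of the remaining closed set, lowering the dimension at each step by applying Theorem \ref{tropbasiselement} exactly as in the proof of the preceding corollary; this is the same recursion in a more explicit guise.
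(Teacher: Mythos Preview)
Your proof is correct and is essentially the same argument as the paper's: both repeatedly apply Theorem \ref{thm:main} to an irreducible piece, peel off the resulting open set, and recurse on the smaller closed complement. The paper phrases this as an explicit descent on dimension (your ``alternative'' paragraph), while you package it as Noetherian induction with a minimal counterexample; these are the same recursion in different dress.
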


Note that one of the $\Fc_k$ in the statement can be the empty polyhedral complex.

\begin{proof}
We proceed by inductively cutting out Zariski-open sets of subvarieties of $\GL_n({\kx})$ for which $T(g(I))$ is the same polyhedral complex by using Theorem \ref{thm:main} repeatedly. In each step of this process the dimension of the remaining set strictly decreases, as the complement of a non-empty open set has always a strictly smaller dimension. For the first step let $\emptyset\neq U\subset\GL_n({\kx})$ be open, such that $T(g(I))=\gT(I)$ for every $g\in U$, which exists by Theorem \ref{thm:main}. The complement of $U\subset \GL_n({\kx})$ is closed and, hence, is the union of finitely many irreducible subvarieties of dimension less than $n^2$. Again by Theorem \ref{thm:main} we choose a non-empty Zariski-open subset $U_V$ of each such component $V$, such that $T(g(I))$ is the same (possibly empty) polyhedral complex for every $g\in U_V$. The complement of $U\cup(\bigcup_V U_V)$ in $\GL_n({\kx})$ now has dimension less than $n^2-1$ and thus is the union of finitely many subvarieties of dimension less than $n^2-1$. Proceeding inductively we add a finite number of possible polyhedral complexes for $T(g(I))$ to our collection in each step, while decreasing the dimension of the set of the remaining $g\in\GL_n({\kx})$. This algorithm stops when the remaining set has dimension $0$, i.e. it is a union of finitely many points. We can finally add the tropical varieties corresponding to those points to our collection of polyhedral complexes, thereby obtaining the desired result.
\end{proof}

This statement of course raises the natural question of classifying all possible tropical varieties of a given ideal or class of ideals in cases of interest.

\section{Examples}\label{examples}

We conclude this paper with three classes of examples where generic Gr\"obner complexes and generic tropical varieties are directly computable sketching the ideas and referring to \cite[Chapter 4]{ICH} for full proofs.

\begin{ex}
In \cite[Theorem 4.5]{TK} it was shown that in the constant coefficient case the generic tropical variety of a graded ideal $I\subset S_K$ on $\GL_n({\kx})$ as a set depends only on the dimension of $S_K/I$. This seems to offer a rather coarse distinction of ideals by their tropical varieties. It raises the question whether one can make a finer differentiation by choosing a suitable subvariety $V\subset \GL_n({\kx})$, for which $\gT_V(I)$ can be different for ideals of the same dimension, but is still computable. One such subvariety is the group of diagonal matrices in $\GL_n({\kx})$. While generic tropical varieties over $\GL_n({\kx})$ provide a very rough distinction of ideals, we will show that generic tropical varieties over diagonal matrices constitute an example for the other extreme.

Let $D_n=\left\{g\in \GL_n({\kx}): g_{ij}=0 \text{ for } i\neq j\right\}$ be the set of all diagonal matrices of $\GL_n({\kx})$. This set is as well a subgroup of $\GL_n({\kx})$ as an $n$-dimensional subvariety. In tropical geometry it plays a role in the study of singularities of tropical curves, see \cite{MAMASH}. We do not assume to be in the constant coefficient case in this section.

By Theorem \ref{thm:main} we know that for every graded ideal $I\subset S_{\K}$ there exists a non-empty Zariski-open set $U\subset D_n({\kx})$, such that $T(g(I))$ is the same polyhedral complex for every $g\in U$, i.e. the generic tropical variety $\gT_{D_n({\kx})}(I)$ on $D_n({\kx})$ exists. It is thus a natural problem to describe $\gT_{D_n({\kx})}(I)$ and to ask what information of $I$ can be obtained from it. One can show directly that $T(g(I))=T(I)$ for all $g\in D_n$. The main reason for this is, that in this special case taking initial forms commutes with changing coordinates: For a homogeneous polynomial $f\in S_K$, $g\in D_n$ and $\omega\in \R^n$ we have $\inom_{\omega}(g(f))=g(\inom_{\omega}(f))$. From this it follows that $\inom_{\omega}(g(I))=\inom_{\omega'}(g(I))$ if and only if $\inom_{\omega}(I)=\inom_{\omega'}(I)$ for $\omega,\omega'\in\R^n$ and $g\in D_n$. In particular, $I$ and $g(I)$ have the same Gr\"obner complex. Showing that the same polyhedra in this Gr\"obner complex correspond to monomial-free initial ideals for $I$ and $g(I)$ yields that in fact $T(I)=T(g(I))$ for $g\in D_n$. This shows that for a graded ideal $I\subset S_L$ there is always the $n$-dimensional subvariety $D_n({\kx})$ of $\GL_n({\kx})$, such that $T(g(I))=T(I)$ for every $g\in D_n({\kx})$. 
\end{ex}

\begin{ex}
For principal ideals generated by some homogeneous polynomial $0\neq f\in S_L={\K}[x_1,\ldots,x_n]$ we can explicitly describe the generic Gr\"obner complex and the generic tropical variety on $\GL_n({\kx})$. Even in the non-constant coefficient case $\gGC(f)$ and $\gT(f)$ are fans, which are closely related to the generic tropical fan, see for example \cite[Definition 4.1]{TK}. Analogously to \cite[Lemma 5.1]{TK} we can find an open subset $U\subset \GL_n(K)$ such that
\begin{enumerate}
\item $g(f)$ has constant support and every monomial $x_j^{d}$ appears with non-zero coefficient,
\item the valuations of the coefficients of all monomials appearing in $g(f)$ are independent of $g$,
\item the valuation of the coefficients of the $x_j^{d}$ are all the same and minimal among the valuations of all coefficients
\end{enumerate}
for $g\in U$. This is done by considering the polynomial $y(f)\in S_{L(\GL_n(K))}$, where $L(\GL_n(K))$ is the field as in Notation \ref{kv}. By Corollary \ref{valfieldext} the field $L(\GL_n(K))$ is a valued subfield of $K(\GL_n(K))\left\{\!\!\left\{t\right\}\!\!\right\}$. Regarding $y(f)$ as an element of $S_{K(\GL_n(K))\left\{\!\!\left\{t\right\}\!\!\right\}}$ we can choose $U$ such that none of the coefficients of $y(f)$ and none of the leading terms of those coefficients vanish when substituting $g$ for $y$ for any $g\in U$. This set $U$ fulfills the above conditions.

For a homogeneous polynomial $0\neq f \in S_L$ this yields a complete characterization of the generic Gr\"obner fan and generic tropical variety on $\GL_n(K)$. In fact:
\begin{enumerate}
\item $\gGC(f)$ is equal to the generic tropical fan $\Wc_n$.
\item $\gT(f)$ is equal to $\Wc_n^{n-1}$, the $(n-1)$-skeleton of the generic tropical fan.
\end{enumerate}
This is done by the same reasoning as in the proof of \cite[Proposition 5.2]{TK}
\end{ex}

\begin{ex}
Tropical varieties of linear ideals, i.e. ideals generated by linear forms, in the constant coefficient case have been studied in \cite{ARKL} using the theory of matroids. If we choose $V=\GL_n({\kx})$, the generic Gr\"obner complex and generic tropical variety of a linear ideal can be computed explicitly. Even in the non-constant coefficient case both objects are fans in $\R^n$. Moreover, these fans just depend on the dimension of the ideal and have a very symmetric structure.

One feature of linear ideals making this class accessible to Gr\"obner basis computations is the direct connection of their structure to basic linear algebra. To be able to apply linear algebra methods the first claim is that for $\omega\in\R^n$ the ideal $\inom_{\omega}(I)$ is linear. By Proposition \ref{hilbinit} the Hilbert functions of $(S_{\K}/I)$ and $(S_{\kx}/\inom_{\omega}(I))$ agree. Hence, the multiplicities of the corresponding projective varieties are the same. Since $I$ is linear, this multiplicity is $1$. As linear varieties are exactly the varieties of multiplicity $1$ (see \cite[Exercise I,7.6.]{H}), this means that $\inom_{\omega}(I)$ is linear. Hence, by Theorem \ref{gengrobcomp} the generic Gr\"obner complex $\gGC(I)$ is equal to the generic Gr\"obner complex $\Cc^1$ in degree $1$ as introduced in Lemma \ref{degcomplex}.

To show that $\Cc^1$ is a fan note that for $g\in \GL_n(K)$ the ${\K}$-vector space $g(I)_1$ corresponds to a point in the Grassmannian $\Gr_{\K}(n-m,n)$. This holds, as there are $n$ monomials of degree $1$ in $S_L$ and we have $\dim_{\K} g(I)_1=n-m$. Thus the Pl\"ucker coordinates $P_J(g)$ of $g(I)_1$ are indexed by sets $J\subset \left\{1,\ldots,n\right\}$ of cardinality $n-m$. 

By the proof of Lemma \ref{groebpoly} every Gr\"obner polyhedron $C_{g(I)}^1[\omega]$ of $g(I)$ in degree $1$ is defined by equalities and inequalities among the expressions $$v(P_J(g))+\omega\cdot M_J,$$ where $M_J=\sum_{j\in J}\mu_j$ is the sum over the exponents of all monomials indexed by $J$ as in the proof of Lemma \ref{groebpoly}. The key point now is to show that there exists $\emptyset \neq U \subset \GL_n(K)$ such that for every $g\in $ we have $v(P_J(g))=v(P_{J'}(g))$ for every $J,J'\in [n]^{(n-m)}$. We then have equalities and inequalities of the form $\omega\cdot(M_J-M_{J'})<0$ and $\omega\cdot (M_J-M_{J'})=0$, which define a cone in $\R^n$. To prove the claim note that we can obtain an open set $\emptyset\neq U'\subset \GL_n(K)$ such that for given $J$ the real number $v(P_J(g))$ is the same for every $g\in U'$ by Lemma \ref{groebpoly}. Moreover, for every $J,J'\in [n]^{(n-m)}$ we can choose a coordinate permutation $\tau\in \GL_n(K)$ such that $P_J(\tau\circ g)=P_{J'}(g)$ (this is the step where the proof fails for general non-linear ideals, as this claim is false there). As every non-empty open set contains a non-empty open set $W$ such that $g\in W$ implies $\tau(g)\in W$ for every permutation $\tau$, the claim follows by considering such a set $W\subset U'$.

Comparing the inequalities above then yields the following result. Let $I\subset S_{\K}$ be a linear ideal with $\dim (S_{\K}/I)=m$. Then the generic Gr\"obner complex $\gGC(I)$ contains the following maximal cones: For $\omega\in \R^n$, such that $$\omega_{i_1},\ldots,\omega_{i_{n-m}}<\omega_{i_{n-m+1}},\ldots,\omega_{i_n}$$ with $\left\{i_1,\ldots,i_n\right\}=\left\{1,\ldots,n\right\}$ we have $$C[\omega]=\left\{\omega'\in\R^n: \omega'_{i_1},\ldots,\omega'_{i_{n-m}}<\omega'_{i_{n-m+1}},\ldots,\omega'_{i_n}\right\}.$$ 

For the generic tropical variety $\gT(I)$ we now claim that $\gT(I)=\Wc_n^m$, the $m$-skeleton of the generic tropical fan (even in the non-constant coefficient case). As $\gT(I)$ is a subcomplex of $\gGC(I)$, we only have to show that $\left|\gT(I)\right|=\left|\Wc_n^m\right|$ by the above result. This can be proved along the following lines: Let $\omega\in \R^n$ and $g\in W$ as above. By definition $\omega\notin \gT(I)$ if and only if $\inom_{\omega}(g(I))$ contains a monomial. As $\inom_{\omega}(g(I))$ is linear, this is equivalent to $\inom_{\omega}(g(I))$ containing a variable, say $x_k$. This is true if and only if $\inom_{\omega}(g(I))_1$ as a vector subspace of $L^n$ contains the standard basis vector $e_k$ for that $k$. Comparing the equalities and inequalities above this is the same as saying that $\omega_k<\omega_j$ for at least $m$ indices $j\neq k$. This statement is equivalent to the fact that $\min_j\left\{\omega_j\right\}$ is attained at most $n-m$ times, so $\omega\notin \Wc_n^m$ completing the sketch of proof.
\end{ex}

\end{document}